\DeclareMathAlphabet{\mathcalalt}{OMS}{cmsy}{m}{n}
\newcommand{\poalgfont}{\mathcalalt}
\newcommand{\bbfont}{\mathbbm}
\newcommand{\lebfont}{\mathcal}
\newcommand{\opstrufont}{\mathcalalt}
\newcommand{\borelfont}{\mathscr}
\numberwithin{equation}{section}
\newcommand{\upc}{{\mathrm{c}}}
\newcommand{\upd}{{\mathrm{d}}}
\newcommand{\upC}{{\mathrm{C}}}
\newcommand{\NN}{{\bbfont N}}
\newcommand{\RR}{{\bbfont R}}
\newcommand{\ulp}{{\textup{(}}}
\newcommand{\urp}{{\textup{)}}}
\newcommand{\uppars}[1]{\ulp #1\urp}
\newcommand{\abs}[1]{{\lvert #1 \rvert}}
\newcommand{\norm}[1]{{\lVert #1 \rVert}}
\newcommand{\mc}{mono\-tone com\-plete}
\newcommand{\smc}{$\sigma$-mono\-tone com\-plete}
\newcommand{\Dc}{De\-de\-kind com\-plete}
\newcommand{\sDc}{$\sigma$-De\-de\-kind com\-plete}
\newcommand{\wir}{weakly inner regular}
\newcommand{\SOT}{{\ensuremath{\mathrm{SOT}}}}
\newcommand{\comp}{{\upc}}
\newcommand{\indicator}[1]{\chi_{#1}}
\newcommand{\zerofunction}{\textbf{0}}
\newcommand{\onefunction}{\textbf{1}}
\newcommand{\pos}[1]{{#1^+}}
\newcommand{\negt}[1]{{#1^-}}
\newcommand{\seq}[1]{\{{#1}_n\}_{n=1}^{\infty}}
\newcommand{\net}[1]{\{{#1}_\lambda\}_{\lambda\in \Lambda}}
\newcommand{\largest}{\infty}
\newcommand{\sa}{{\mathrm{sa}}}
\newcommand{\supp}{\mathrm{supp}\,}
\newcommand{\f}[1]{(#1)} % pairing
\newcommand{\lrf}[1]{\left(#1\right)}
\newcommand{\inp}[1]{\langle #1 \rangle}
\newcommand{\lrinp}[1]{\left\langle #1\right\rangle}
\newcommand{\pset}{X}
\newcommand{\pt}{x}
\newcommand{\ts}{X}
\newcommand{\os}{E}
\newcommand{\posos}{\pos{\os}}
\newcommand{\osext}{\overline{\os}}
\newcommand{\pososext}{\overline{\posos}}
\newcommand{\posR}{\pos{\RR}}
\newcommand{\Rext}{\overline{\RR}}
\newcommand{\posRext}{\overline{\pos{\RR}}}
\newcommand{\hilbert}{H}
\newcommand{\jbw}{\poalgfont{M}}
\newcommand{\posmap}{\pi}
\newcommand{\posmapxp}{\posmap_{x^\prime}}
\newcommand{\idop}{I}
\newcommand{\odual}[1]{{#1^{\thicksim}}}
\newcommand{\ndual}[1]{{#1^{\ast}}}
\newcommand{\ocdual}[1]{{#1_{\mathrm {oc}}^{\thicksim}}}
\newcommand{\socdual}[1]{{#1_{\sigma\mathrm {oc}}^{\thicksim}}}
\newcommand{\odualos}{\odual{\os}}
\newcommand{\ndualos}{\ndual{\os}}
\newcommand{\ocdualos}{\ocdual{\os}}
\newcommand{\socdualos}{\socdual{\os}}
\newcommand{\bounded}{{\opstrufont B}}
\newcommand{\linear}{{\opstrufont L}}
\newcommand{\boundedh}{\bounded (\hilbert)}
\newcommand{\ocontinuous}{\linear_{\mathrm{oc}}}
\newcommand{\socontinuous}{\linear_{\sigma\mathrm{oc}}}
\newcommand{\linearop}[1]{\linear(#1)}
\newcommand{\regularop}[1]{\linear_{\mathrm r}(#1)}
\newcommand{\ocontop}[1]{\ocontinuous(#1)}
\newcommand{\socontop}[1]{\socontinuous(#1)}
\newcommand{\alg}{\Omega}
\newcommand{\borel}{\borelfont B}
\newcommand{\rcompacto}{\Upsilon}
\newcommand{\mss}{\Delta}
\newcommand{\msstwo}{\Gamma}
\newcommand{\ms}{(\pset,\alg)}
\newcommand{\npm}{\mu}
\newcommand{\pom}{\npm^\ast}
\newcommand{\npn}{\nu}
\newcommand{\npmxp}{\npm_{x^\prime}}
\newcommand{\npnxp}{\npn_{x^\prime}}
\newcommand{\spaceofmeasuresletter}{{\mathrm M}}
\newcommand{\posextmeasts}{{\spaceofmeasuresletter(\ts,\borel;\pososext)}}
\newcommand{\posextBmeas}{{\spaceofmeasuresletter_{\mathrm {B}}(\ts,\borel;\pososext)}}
\newcommand{\posrBmeas}{{\spaceofmeasuresletter_{\mathrm {rB}}(\ts,\borel;\posos)}}
\newcommand{\posextrBmeas}{{\spaceofmeasuresletter_{\mathrm {rB}}(\ts,\borel;\pososext)}}
\newcommand{\posqrBmeas}{{\spaceofmeasuresletter_{\mathrm {qrB}}(\ts,\borel;\posos)}}
\newcommand{\posextqrBmeas}{{\spaceofmeasuresletter_{\mathrm {qrB}}(\ts,\borel;\pososext)}}
\newcommand{\opint}[1]{I_{#1}}
\newcommand{\opintm}{\opint{\npm}}
\newcommand{\di}[1]{\,\upd #1}
\newcommand{\orderintegral}[3]{{\int_{#1}^{\mathrm{o}}\! {#2}\di {#3}}}
\newcommand{\ointm}[1]{\orderintegral{\pset}{#1}{\npm}}
\newcommand{\cont}[1]{{\upC}(#1)}
\newcommand{\conto}[1]{\upC_0(#1)}
\newcommand{\contc}[1]{\upC_{\upc}(#1)}
\newcommand{\contts}{\cont{\ts}}
\newcommand{\contots}{\conto{\ts}}
\newcommand{\contcts}{\contc{\ts}}
\newcommand{\integrablefun}{{{\lebfont L}^1(\pset,\alg,\npm;\RR)}}
\newcommand{\boundedmeasfun}{{{\lebfont{B}(\pset,\alg;\RR)}}}
\newcommand{\ellone}{{{\mathrm L}^1(\pset,\alg,\npm;\RR)}}
\newcommand{\elemfunts}{{{\lebfont E}(\ts,\borel;\posR)}}
\newcommand{\integrablefunts}{{{\lebfont L}^1(\ts,\borel,\npm;\RR)}}
\newcommand{\boundedmeasfunts}{{{\lebfont B (\ts,\borel;\RR)}}}
\newtheorem{theorem}{Theorem}[section]
\newtheorem{proposition}[theorem]{Proposition}
\newtheorem{lemma}[theorem]{Lemma}
\newtheorem{corollary}[theorem]{Corollary}
\theoremstyle{definition}
\newtheorem{definition}[theorem]{Definition}
\theoremstyle{remark}
\newtheorem{remark}[theorem]{Remark}
\newtheorem{examples}[theorem]{Examples}
\setlist[enumerate,1]{label=\textup{(\arabic*)},ref=\textup{(\arabic*)}}
\setlist[enumerate,2]{label=\textup{(\alph*)},ref=\textup{(\alph*)}}
\setlist[enumerate,3]{label=\textup{(\roman*)},ref=\textup{(\roman*)}}
\setlist[enumerate,4]{label=\textup{(\Alph*)},ref=\textup{(\Alph*)}}
\newlist{enumerate_alpha}{enumerate}{1}
\setlist[enumerate_alpha,1]{label=\textup{(\alph*)},ref=\textup{(\alph*)}}
\crefname{theorem}{Theorem}{Theorems}
\crefname{proposition}{Proposition}{Propositions}
\crefname{lemma}{Lemma}{Lemmas}
\crefname{corollary}{Corollary}{Corollaries}
\crefname{definition}{Definition}{Definitions}
\crefname{example}{Example}{Examples}
\crefname{examples}{Examples}{Examples}
\crefname{remark}{Remark}{Remarks}
\crefname{equation}{equation}{equations}
\crefname{section}{Section}{Sections}
\crefname{subsection}{Section}{Sections}
\crefname{subsubsection}{Section}{Sections}
\begin{document}
%%%%%%%%%%%%%%%%%%%%%%%%%%%%%%%%% BEGIN FRONTMATTER %%%%%%%%%%%%%%%%%%%%%%%%%%%%%%%%%%%%%%%%%%%%%%%%%%%%%%%%%%%%%%%%%%%%%%%%%

\title[Riesz representation theorems for positive linear operators]{Riesz representation theorems for positive linear operators}

\author{Marcel de Jeu}
\address[Marcel de Jeu]{Mathematical Institute, Leiden University, P.O.\ Box 9512, 2300 RA Leiden, The Netherlands\\
	and\\
	Department of Mathematics and Applied Mathematics, University of Pretoria, Corner of Lynnwood Road and Roper Street, Hatfield 0083, Pretoria,
	South Africa
	}
\email[Marcel de Jeu]{mdejeu@math.leidenuniv.nl}

\author{Xingni Jiang}
\address[Xingni Jiang]{College of Mathematics, Sichuan University, No.\ 24, South Section, First Ring Road, Chengdu, P.R.\ China}
\email[Xingni Jiang]{x.jiang@scu.edu.cn}

\subjclass[2010]{Primary 47B65; Secondary 28B15}
\keywords{Riesz representation theorem, measure, order integral, partially ordered vector space}

\begin{abstract}
 We generalise the Riesz representation theorems for positive linear functionals on $\mathrm{C}_{\mathrm c}(X)$ and $\mathrm{C}_{\mathrm 0}(X)$, where $X$ is a locally compact Hausdorff space, to positive linear operators from these spaces into a partially ordered vector space $E$. The representing measures are defined on the Borel $\sigma$-algebra of $X$ and take their values in the extended positive cone of $E$. The corresponding integrals are order integrals. We give explicit formulas for the values of the representing measures at open and at compact subsets of $X$.\\
 Results are included where the space $E$ need not be a vector lattice, nor a normed space.  Representing measures exist, for example, for positive linear operators into Banach lattices with order continuous norms, into the regular operators on KB-spaces, into the self-adjoint linear operators on complex Hilbert spaces, and into JBW-algebras.
\end{abstract}

\maketitle

\section{Introduction and overview}\label{2_sec:introduction}

\noindent Let $\ts$ be a locally compact Hausdorff space, and let $\posmap:\contcts\to\RR$ be a positive linear functional. The Riesz representation theorem asserts that there is a unique regular Borel measure on the Borel $\sigma$-algebra of $\ts$, such that
\begin{equation}\label{2_eq:classical_equation}
	\posmap(f)=\int_\ts \! f\di{\npm}
	\end{equation}
	for all $f$ in $\contcts$.

In this paper, we establish analogous representation theorems for positive linear operators $\posmap:\contcts\to\os$ and $\posmap:\contots\to\os$, where $\os$ is a (suitable) partially ordered vector space, while giving explicit formulas for the measures of open and of compact subsets of $\ts$.\footnote{In the course of the present paper, its prequel \cite{de_jeu_jiang:2021a}, and its sequels \cite{de_jeu_jiang:2021c,de_jeu_jiang:2021d}, we shall encounter maps with $\contcts$ or $\contots$ as domains that are sometimes positive linear operators, sometimes vector lattice homomorphisms, and sometimes positive algebra homomorphisms. For each of these contexts, a canonical symbol for such maps could be chosen. However, since our results for these contexts are related, we have chosen to use the same symbol $\posmap$ throughout, thus keeping the notation as uniform as possible.} Results are included where the space $\os$ need not be a vector lattice, nor a normed space. As will become clear, representing measures exist for, e.g., positive linear operators into Banach lattices with order continuous norms, into the regular operators on KB-spaces, into the self-adjoint operators on complex Hilbert spaces, and into JBW-algebras. When $\os=\RR$, the results specialise to the classical Riesz representation theorems.

In the general setting of the present paper, the measure $\npm$ in \cref{2_eq:classical_equation} takes its values in the (extended) positive cone of $\os$, and the integral in question is an order integral. Such measures and their order integrals are the subject of \cite{de_jeu_jiang:2021a}, which extends earlier work by Wright.  The results for the order integral in \cite{de_jeu_jiang:2021a} are fairly complete and also include convergence theorems that can be of use in applications, with the existence theorems in the present paper as a starting point.

A possible multiplicativity of the linear operator $\posmap$ is not an issue in the current paper: being positive and linear is enough. In the sequel \cite{de_jeu_jiang:2021c}, we shall consider positive algebra homomorphisms from $\contcts$ or $\contots$ into partially ordered algebras. The representing measures from the current paper can then be shown to be \emph{spectral} measures that take values in the algebras. It will be seen in \cite{de_jeu_jiang:2021c} that the ensuing existence theorems for abstract spectral measures immediately imply the classical ones for representations of (the complexification of) $\contots$ on complex Hilbert spaces, and for positive representations of $\contots$ on KB-spaces in \cite{de_jeu_ruoff:2016}. The up-down theorems that are established in \cite{de_jeu_jiang:2021c} for general partially ordered algebras yield the familiar results for Hilbert spaces as special cases.  In \cite{de_jeu_jiang:2021d}, which is another sequel to the present paper, we shall be concerned with representation theorems for vector lattices (resp.\ Banach lattices) of regular operators from $\contcts$ and $\contots$ into \Dc\ vector lattices (resp.\ Banach lattices with order continuous norms) in the spirit of \cite[Theorem~38.7]{aliprantis_burkinshaw_PRINCIPLES_OF_REAL_ANALYSIS_THIRD_EDITION:1998}.

We shall discuss the relation between the present paper and existing representation theorems for positive linear operators in the literature at the end of this introduction. There appears to be no previous work in the vein of the sequels \cite{de_jeu_jiang:2021d} or \cite{de_jeu_jiang:2021c} to the present paper.

\medskip

\noindent This paper is organised as follows.

\cref{2_sec:preliminaries} contains the necessary prerequisites from \cite{de_jeu_jiang:2021a}, including those on measures and the order integral, and can serve as a summary thereof.

In the preparatory \cref{2_sec:measures_on_locally_compact_hausdorff_spaces}, we introduce various types of regularity of measures on the Borel $\sigma$-algebra of a locally compact Hausdorff space that take their values in the extended positive cones of  partially ordered vector spaces. A few auxiliary results for the remainder of the paper are also established,

\cref{2_sec:riesz_representation_theorems_for_contcts_normed_case} contains the proof of a representation theorem (see \cref{2_res:riesz_representation_theorem_for_contcts_normed_case}) for positive linear operators $\posmap:\contcts\to\os$, where $\os$ is (for all practical purposes) a Banach lattice with an order continuous norm. It is one of the essential ingredients for the sequel \cite{de_jeu_jiang:2021d}. The possibly infinite representing measure in it is always regular.

Since spaces of operators will only rarely satisfy the conditions on the space $\os$ in \cref{2_sec:riesz_representation_theorems_for_contcts_normed_case}, other results are needed that \emph{do} apply (at least) when $\os$ is a space of operators. Such results are to be found in \cref{2_sec:riesz_representation_theorems_for_contcts_normal_case}, where representation theorems (see \cref{2_res:riesz_representation_theorem_for_contcts_finite_normal_case,2_res:riesz_representation_theorem_for_contcts_normal_case}) are established when the codomain of the positive linear operator $\posmap:\contcts\to\os$ is a \mc\ and normal space. The class of such spaces $\os$ is quite varied. We refer to \cref{2_ex:combination_result_for_normality_and_monotone_completeness} and also to \cite[Section~3]{de_jeu_jiang:2021a} for examples, some of which were already mentioned above. When $\os$ consists of the regular operators on a Banach lattice with an order continuous norm, or of the self-adjoint operators in a strongly closed complex linear subspace of the bounded linear operators on a complex Hilbert space, then the representing measure in \cref{2_res:riesz_representation_theorem_for_contcts_finite_normal_case} has the familiar property of being $\sigma$-additive in the strong operator topology. The representing measures in \cref{2_res:riesz_representation_theorem_for_contcts_finite_normal_case} are finite (by assumption) and regular. Those in \cref{2_res:riesz_representation_theorem_for_contcts_normal_case} can be infinite, but regularity need then hold only locally. In \cref{3_rem:comparison}, we compare the applicability of the main representation theorems (\cref{2_res:riesz_representation_theorem_for_contcts_normed_case,2_res:riesz_representation_theorem_for_contcts_finite_normal_case,2_res:riesz_representation_theorem_for_contcts_normal_case}) for positive linear operators $\posmap:\contots\to\os$.

The final \cref{2_sec:riesz_representation_theorems_for_contots} is concerned with positive linear operators $\posmap:\contots\to\os$. The domain $\contots$ is now a Banach lattice, and we can then exploit automatic continuity to derive representation theorems for such linear operators from those for their restrictions to $\contcts$. The representing measures thus obtained are all finite. \cref{2_res:riesz_representation_theorem_for_contots_kb_case} covers the case where $\os$ is a KB-space, and \cref{2_res:riesz_representation_theorem_for_contots_normal_case} applies to the larger class of quasi-perfect partially ordered vector spaces. In \cref{2_res:riesz_representation_theorem_for_contots_operators_on_kb_space,2_res:riesz_representation_theorem_for_contots_operators_on_hilbert_space}, the space $\os$ consists of the regular operators on a KB-space and of the self-adjoint operators in a strongly closed complex linear subspace of the bounded linear operators on a complex Hilbert space, respectively. In both cases, the representing measures are strongly $\sigma$-additive again. Furthermore, a number of \SOT-closed subspaces of $\os$ that are naturally associated with $\posmap$ can be seen to coincide. As a consequence, the representing measure takes its values in the coinciding bicommutants of $\posmap(\contcts)$ and $\posmap(\contots)$.

\medskip

\noindent We now give an overview of earlier work that we are aware of on Riesz representation theorems for positive linear operators from spaces of continuous functions into various types of partially ordered vector spaces. The space $\ts$ is always a locally compact Hausdorff space, unless otherwise stated. As will become clear, one can hardly speak of `the' Riesz representation theorem for positive linear operators from $\contcts$ or $\contots$ into partially ordered vector spaces.

To start with, there is the seminal paper by Wright \cite{wright:1969}. It has a representation theorem for a positive linear operator from $\cont{\ts}$, where $\ts$ is compact, into a Stone algebra. A second paper \cite{wright:1971b} by the same author covers the case of a positive linear operator from $\contts$, where $\ts$ is compact, into a \sDc\ vector lattice. A third paper \cite{wright:1971a} contains a representation theorem for a positive linear operator from $\contcts$ into a \Dc\ vector lattice. In a fourth \cite{wright:1972}, the existence is established of a representing Baire measure for a positive linear operator from $\contts$, where $\ts$ is compact, into a \smc\ partially ordered vector space. When comparing the latter result to our \cref{2_res:riesz_representation_theorem_for_contcts_finite_normal_case}, both have their strong points. Wright's conditions on $\os$ are more lenient, but the space $\ts$ has to be compact, which is essential to the proofs. \cref{2_res:riesz_representation_theorem_for_contcts_finite_normal_case}, on the other hand, is valid for locally compact spaces, and also gives explicit formulas for the measure of open and of compact subsets.

In \cite{khurana:1976}, one of Khurana's papers in this direction, it is shown that a positive linear operator from $\contc{\ts}$ into a \mc\ partially ordered vector space can be extended to a $\sigma$-order continuous linear operator defined on the bounded Borel functions with compact support; this goes in the direction of a Riesz representation theorem. In a second paper \cite{khurana:1978}, a representation theorem is established, under certain additional conditions, for a positive linear operator from the bounded continuous functions on a completely regular Hausdorff space into a \mc\ par\-tial\-ly ordered vector space. In \cite{khurana:2008a}, he proves a representation theorem for a positive linear operator from the continuous functions on a completely regular $\mathrm{T}_1$-space into a Stone algebra.

In \cite{lipecki:1987}, Lipecki shows the existence of a representing finitely additive measure for positive linear operators from the bounded continuous functions on an arbitrary topological space into a \mc\ partially ordered vector space; there is a second result for the bounded continuous functions when the topological space is normal.

In \cite{coquand:2004}, Coquand gives a new proof of Wright's result in \cite{wright:1972}.

The results cited above also include regularity properties of the representing measures, but not always the same properties. In cases where several results apply, it is, therefore, not clear whether the representing measures are necessarily equal.

The results in the present paper have the advantage that the space $\ts$ need not be compact and that they apply, amongst others, when $E$ is a \mc\ normal space.  This is a fairly large class of spaces, containing many spaces that are not vector lattices. Moreover, explicit formulas for the measures of open and of compact subsets of $\ts$ are given. We are not aware of a similar combination of a reasonably wide range of applicability and concrete formulas in the literature on partially ordered vector spaces. The usefulness of this combination can be seen in, e.g, \cref{2_res:riesz_representation_theorem_for_contots_operators_on_kb_space,2_res:riesz_representation_theorem_for_contots_operators_on_hilbert_space}, which, though in two rather different contexts, are both virtually immediate consequences of one underlying general result. It will become even more pronounced in the sequel \cite{de_jeu_jiang:2021c} when considering positive algebra homomorphisms from $\contcts$ and $\contots$ into \mc\ normal partially ordered algebras with monotone continuous multiplications.

\section{Preliminaries}\label{2_sec:preliminaries}

\noindent In this section, we collect some conventions, definition, notations, and preparatory results that will be used in the sequel.

\medskip

\noindent The indicator function of a subset $S$ of a set $\pset$ is denoted by $\indicator{S}$. We shall also write $\zerofunction$ for $\indicator{\emptyset}$ and $\onefunction$ for $\indicator{\pset}$. When $\ts$ is a topological space, then we let $\contcts$, resp.\ $\contots$, denote the continuous real-valued functions on $\ts$ that have compact support, resp.\ vanish at infinity. 

\subsection{Partially ordered vector spaces}\label{2_subsec:partially_ordered_vector_spaces}

\noindent All vector spaces we shall consider are over the real numbers, unless otherwise indicated. An operator between two vector spaces and a functional are always supposed to be linear, but\textemdash when this notion is applicable\textemdash need not be bounded. We do not require that the positive cone $\pos{\os}$ of a partially ordered vector space $\os$ be generating. Equivalently, we do not require that $\os$ be directed. We do require, however, that $\posos$ be proper, i.e., that $\posos\cap(-\posos)=\{0\}$. All vector lattices are supposed to be Archimedean. 

\begin{definition}\label{2_def:order_completeness} A partially ordered vector space $\os$ is called
	\begin{enumerate}
		\item\label{2_part:order_completeness_1}
		\emph{\smc} if every increasing sequence $\seq{x}$ in $\os$ that is bounded from above has a supremum in $\os$;
		\item\label{2_part:order_completeness_2}
		\emph{\mc} if every increasing net $\net{x}$ in $\os$ that is bounded from above has a supremum in $\os$;
		\item\label{2_part:order_completeness_3}
		\emph{\sDc} if every non-empty at most countably infinite subset $S$ of $\os$ that is bounded from above has a supremum in $\os$;
		\item\label{2_part:order_completeness_4}
		\emph{\Dc} if every non-empty subset $S$ that is bounded from above has a supremum  in $\os$.
	\end{enumerate}
\end{definition}

We shall employ the usual notation in which $x_\lambda\downarrow$ means that $\net{x}$ is a decreasing net, and in which $x_\lambda\downarrow x$ means that $\net{x}$ is a decreasing net with infimum $x$. The notations $x_\lambda\uparrow$ and $x_\lambda\uparrow x$ are similarly defined.

It was observed in \cite[Lemma~1.1]{wright:1972} that every \smc\ partially ordered vector space $\os$ (and then also every \mc, \sDc, or \Dc\  partially order vector space) is Archimedean, i.e., $\bigwedge\{\varepsilon x : \varepsilon >0\}=0$ for all $x\in\pos{\os}$. We shall use this a number of times.

\medskip

\noindent Vector spaces of operators between partially ordered vector spaces can inherit completeness properties from the codomains. In order to formulate this, we first introduce some notation and terminology.

When $\os$ and $F$ are vector spaces, then $\linearop{\os,F}$ denotes the vector space of operators from $\os$ into $F$. An operator $T\in\linearop{\os,F}$ between two partially ordered vector spaces is \emph{positive} if $T(\pos{\os})\subseteq \pos{F}$, and \emph{regular} if it is the difference of two positive operators.  The regular operators from $\os$ into $F$ form a vector space that is denoted by $\regularop{\os,F}$. When $\pos{\os}$ is directed, then every linear subspace of $\linearop{\os,F}$ that contains $\regularop{\os,F}$ is naturally partially ordered with the regular positive operators from $\os$ into $F$, denoted by $\pos{\regularop{\os,F}}$, as its positive cone.
We shall write $\linearop{\os}$ for $\linearop{\os,\os}$,  $\regularop{\os}$ for $\regularop {\os,\os}$, and $\odualos$ for $\regularop{\os,\RR}$. When $\os$ is a Banach lattice, then $\odualos$ coincides with the norm dual $\ndualos$ of $\os$.

We can now state how completeness is hereditary. When $\os$ is a directed partially ordered vector space, and $F$ is a partially ordered vector space that is \mc\  \uppars{resp.\ \smc}, then any linear subspace of $\linearop{\os,F}$ containing $\regularop{\os,F}$  is \mc\  \uppars{resp.\ \smc}; see \cite[Proposition~3.1]{de_jeu_jiang:2021a}.

\medskip

\noindent The \mc\ partially ordered vector spaces that are also normal will play an
important part in this paper. We now proceed to define the latter notion.

\begin{definition}\label{2_def:order_continuity} Let $\os$ and $F$ be partially ordered vector spaces, and let $T:\os\to F$ be a positive operator. Then $T$ is called \emph{order continuous} (resp.\ \emph{$\sigma$-order continuous}) if $Tx_\lambda\downarrow 0$ in $F$ whenever $x_\lambda\downarrow 0$ in $\os$ (resp.\ if $Tx_n\downarrow 0$ in $F$ whenever $x_n\downarrow 0$ in $\os$).  A general operator in $\regularop{\os,F}$ is called order continuous (resp.\ $\sigma$-order continuous) if it is the difference of two positive order continuous operators. We let $\ocontop{\os,F}$ (resp.\ $\socontop{\os,F}$) denote the order continuous (resp.\ $\sigma$-order continuous) operators from $\os$ into $F$; we shall write $\ocdualos$ for $\ocontop{\os,\mathbb R}$ and $\socdualos$ for $\socontop{\os,\RR}$.\footnote{When $\os$ and $F$ are vector lattices, where $F$ is \Dc, then the notions of order continuous and $\sigma$-order continuous operators in \cref{2_def:order_continuity} agree with the usual ones in the literature as on \cite[p.~123]{zaanen_RIESZ_SPACES_VOLUME_II:1983}; see \cite[Remark~3.5]{de_jeu_jiang:2021a} for this.}
\end{definition}

It is easy to see that $\ocontop{\os,F}$ and $\socontop{\os,F}$ are linear subspaces of $\regularop{\os,F}$. When $\os$ is directed, then they are partially ordered vector spaces with the positive order continuous operators (resp.\ the positive  $\sigma$-order continuous operators) from $\os$ into $F$ as positive cones, which are generating by definition.

\begin{definition}\label{2_def:normal_space}
Let $\os$ be a partially ordered vector space. Then $\os$ is called \emph{normal} when, for $x\in \os$, $\f{x,x^\prime}\geq 0$ for all $x^\prime\in\pos{(\ocdualos)}$ if and only if $x\in \pos{\os}$. We say that $\os$ is \emph{$\sigma$-normal} when, for $x\in \os$, $\f{x,x^\prime}\geq 0$ for all $x^\prime\in\pos{(\socdualos)}$ if and only if $x\in \pos{\os}$.\footnote{
	When $\os$ is a vector lattice, then the notion of normality in \cref{2_def:normal_space} coincides with the usual one in the literature (see \cite[p.~21]{abramovich_aliprantis_INVITATION_TO_OPERATOR_THEORY:2002}, for example) that $\ocdualos$ separates the points of $\os$; see \cite[Lemma~3.7]{de_jeu_jiang:2021a} for this. It also follows from \cite[Lemma~3.7]{de_jeu_jiang:2021a} that a vector lattice $\os$ is $\sigma$-normal as in \cref{2_def:normal_space} if and only if $\socdualos$ separates the points of $\os$.}
\end{definition}

The importance of normality for our work lies in the following result (see \cite[Proposition~3.8]{de_jeu_jiang:2021a}) that will be used quite a few times in the present paper.

\begin{proposition}\label{2_res:inf_and_sup_via_order_dual}
Let $\os$ be a normal partially ordered vector space. Suppose that $\net{x}$ is a net in $\os$, and that $x\in\os$.
\begin{enumerate}
\item If $x_\lambda\downarrow$, then $x_\lambda\downarrow x$ if and only if $\f{x,x^\prime}=\inf_{\lambda\in\Lambda }\f{x_\lambda,x^\prime}$ for all $x^\prime\in\pos{(\ocdualos)}$.\label{2_part:inf}
\item If $x_\lambda\uparrow$, then $x_\lambda\uparrow x$ if and only if $\f{x,x^\prime}=\sup_{\lambda\in\Lambda }\f{x_\lambda,x^\prime}$ for all $x^\prime\in\pos{(\ocdualos)}$.\label{2_part:sup}
\end{enumerate}
\end{proposition}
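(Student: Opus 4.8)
Proof proposal for Proposition 2.8 (inf and sup via order dual)The plan is to prove statement \ref{2_part:inf} directly and then obtain \ref{2_part:sup} from it by passing to $-x_\lambda$ and $-x$, using that a net decreases to $x$ if and only if the negated net increases to $-x$, and that $\ocdualos$ is closed under negation so $\pos{(\ocdualos)}$ is unchanged. So I focus on \ref{2_part:inf}, assuming throughout that $x_\lambda\downarrow$.

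The forward implication is the easy half. If $x_\lambda\downarrow x$, then for every positive order continuous functional $x^\prime$ the net $\f{x_\lambda,x^\prime}$ is decreasing, and order continuity of $x^\prime$ applied to $x_\lambda-x\downarrow 0$ gives $\f{x_\lambda-x,x^\prime}\downarrow 0$, i.e.\ $\f{x_\lambda,x^\prime}\downarrow\f{x,x^\prime}$; in particular the infimum over $\lambda$ equals $\f{x,x^\prime}$. (One should note that $x_\lambda-x\downarrow 0$ genuinely holds: $x$ being the infimum of $\{x_\lambda\}$ means $x$ is a lower bound and the greatest one, which translates verbatim into $0$ being the infimum of $\{x_\lambda-x\}$.)

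For the converse, suppose $\f{x,x^\prime}=\inf_{\lambda}\f{x_\lambda,x^\prime}$ for all $x^\prime\in\pos{(\ocdualos)}$; I must show $x_\lambda\downarrow x$. First, $x$ is a lower bound: fixing any $\lambda_0$ and any $x^\prime\in\pos{(\ocdualos)}$, monotonicity gives $\f{x_{\lambda_0}-x,x^\prime}\geq\inf_\lambda\f{x_\lambda,x^\prime}-\f{x,x^\prime}=0$, so $x_{\lambda_0}-x$ is in the positive cone by the normality hypothesis, i.e.\ $x\leq x_{\lambda_0}$. Second, $x$ is the greatest lower bound: let $y$ be any lower bound, so $y\leq x_\lambda$ for all $\lambda$; then for each $x^\prime\in\pos{(\ocdualos)}$ we have $\f{y,x^\prime}\leq\f{x_\lambda,x^\prime}$ for all $\lambda$, hence $\f{y,x^\prime}\leq\inf_\lambda\f{x_\lambda,x^\prime}=\f{x,x^\prime}$, so $\f{x-y,x^\prime}\geq 0$; again normality forces $x-y\in\posos$, i.e.\ $y\leq x$. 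Thus $x_\lambda\downarrow x$.

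The proof is short and the only genuine content is the use of normality to turn "pairs nonnegatively against all positive order continuous functionals" into "lies in the positive cone"; the main thing to be careful about is keeping the direction of inequalities straight and verifying that the relevant auxiliary nets ($x_\lambda - x$, and $x_\lambda - y$) are the decreasing-to-zero nets to which order continuity or the hypothesis applies. There is no real obstacle here, since Proposition 2.8 is essentially a restatement of Definition 2.5 (normality) combined with the definition of order continuity; the work has already been done in establishing that normality is equivalent to $\pos{(\ocdualos)}$ detecting positivity.
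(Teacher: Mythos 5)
Your proof is correct, and it is the natural argument: the paper itself does not prove this proposition but cites \cite[Proposition~3.8]{de_jeu_jiang:2021a}, and your two-step use of normality (to show $x$ is a lower bound and then the greatest lower bound) together with order continuity for the easy direction is exactly what that result amounts to. The reduction of part~(2) to part~(1) by negating the net is also standard and unproblematic.
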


In the presence of \mc ness, normality of a codomain is a hereditary property: when $\os$ is a directed partially ordered vector space, and $F$ is a \mc\ and normal partially ordered vector space, then any linear subspace of $\linearop{\os,F}$ containing $\regularop{\os,F}$ is \mc\ and normal; see \cite[Proposition~3.11]{de_jeu_jiang:2021a}.

Since both ($\sigma$-)\mc ness and normality are properties that are inherited by partially ordered vector spaces of operators, it is easy to construct examples of \mc\ and normal spaces once one has such a space to begin with.

\begin{examples}\label{2_ex:combination_result_for_normality_and_monotone_completeness}

In \cite[Section~3]{de_jeu_jiang:2021a} we have included a number of examples of partially ordered vector spaces that are \mc\ and normal. These include (but are not limited to):
\begin{enumerate}
	\item\label{2_part:combination_result_for_normality_and_monotone_completeness_1}
	Banach lattices with order continuous norms;
	\item\label{2_part:combination_result_for_normality_and_monotone_completeness_2}
	for partially ordered vector spaces $\os$ and $F$ such that $\os$ is directed and $F$ is \mc\ and normal: every linear subspace of $\linearop{\os,F}$ that contains $\regularop{\os,F}$;
	\item\label{2_part:combination_result_for_normality_and_monotone_completeness_3} as a special case of part~\ref{2_part:combination_result_for_normality_and_monotone_completeness_2}:
	the regular operators on a Banach lattice with an order continuous norm;
	\item\label{2_part:combination_result_for_normality_and_monotone_completeness_4}
	the real vector space that consists of the self-adjoint operators in a strongly closed complex linear subspace of the bounded operators on a complex Hilbert space;
	\item \label{2_part:combination_result_for_normality_and_monotone_completeness_5}
	JBW-algebras.\footnote{We shall use \cite{alfsen_shultz_STATE_SPACES_OF_OPERATOR_ALGEBRAS:2001,alfsen_shultz_GEOMETRY_OF_STATE_SPACES_OF_OPERATOR_ALGEBRAS:2003} references for JBW-algebras. In these books, a JBW-algebra is supposed to have an identity element; see \cite[Definitions~1.5 and~2.2]{alfsen_shultz_GEOMETRY_OF_STATE_SPACES_OF_OPERATOR_ALGEBRAS:2003}. In other sources, this is not supposed. However, as \cite[Lemma~4.1.7]{hanche-olsen_stormer_JORDAN_OPERATOR_ALGEBRAS:1984} shows, the existence of an identity element is, in fact, automatic.}
\end{enumerate}
\end{examples}
	
\subsection{Measures and order integrals}\label{2_subsec:measures_and_order_integrals}

\noindent In this section, we shall briefly summarise the relevant definitions and results for measures and order integrals from \cite[Sections~4 and~6]{de_jeu_jiang:2021a}. This extends earlier work by Wright and contains the usual Lebesgue integral as a special case. We refer to \cite{de_jeu_jiang:2021a} for a discussion of the relation with Wright's work.

We refrain from mentioning here in any detail the material on outer measures in \cite[Section~5]{de_jeu_jiang:2021a}. It is indispensable in the proof of \cref{2_res:riesz_representation_theorem_for_contcts_normed_case}, but it occurs only as an intermediate step and does not reappear.

Let $\os$ be a partially ordered vector space. As the case of the Lebesgue integral on the real line already shows, one cannot expect a representing measure for a positive operator $\posmap: \contcts\to\os$ to be finite. This is why we extend $\os$ by introducing the set  $\osext\coloneqq\os\cup\{\largest\}$ as a disjoint union, and extend the partial ordering from $\os$ to  $\osext$ by declaring that $x\leq \largest$ for all $x\in\osext$. Then $\pososext\coloneqq\posos\cup\{\largest\}$ is the set of positive elements of $\osext$. The elements of $\osext$ that are in $\os$ are called \emph{finite}. One makes $\osext$ into an abelian monoid by retaining the addition on $\os$ and defining $\largest+x\coloneqq\largest$ and $x+\largest\coloneqq\largest$ for all $x\in \osext$. Then $\pososext$ is a sub-monoid of $\osext$. When $x,y\in\osext$ are such that $x\leq y$, then $x+z\leq y+z$ for all $z\in\osext$.
We keep the action of $\RR$ on $\os$, and define $r\cdot\largest\coloneqq\largest$ for all $r\in\posR\setminus\{0\}$ and $0\cdot\largest\coloneqq 0$. Then $rx\leq ry$ for all $r,s\in\posR$ and $x,y\in\osext$ such that $r\leq s$ and $x\leq y$. Furthermore, $r(x+y)=rx+ry$, $(r+s)x=rx+sx$, and $(rs)x=r(sx)$ for all $r,s\in\posR$ and $x,y\in\pososext$. Such relations will be used in the sequel without further reference.

When being used to working with the extended real numbers, which are still linearly ordered, it may be relatively easy to make mistakes when arguing with the ordering of and the operations on $\os$ and $\osext$. It is for this reason that a fair number of technical tools have been collected in  \cite[Lemmas~2.3 to~2.5]{de_jeu_jiang:2021a} that will be used repeatedly in the present paper and its sequels. Whenever necessary, we shall be careful to indicate whether we are working in $\os$ or in $\osext$ when speaking of order bounds, suprema, or infima.

Now that we have the extended space $\osext$ available, it is possible to define the measures that concern us. It was Wright who first observed in \cite{wright:1969} that \cref{2_eq:sigma_additivity}, below, is the proper way to generalise the notion of $\sigma$-additivity from the real numbers to more general partially ordered vector spaces.

A \emph{measurable space} is a pair $\ms$, where $\pset$ is a set and $\alg$ is an algebra of subsets of $\pset$; i.e., $\alg$ is a non-empty collection of subsets of $\pset$ that is closed under taking complements and under taking finite unions. For the moment, we can still work with algebras of subsets, rather than $\sigma$-algebras.

\begin{definition}\label{2_def:positive_pososext_valued_measure}
Let $\ms$ be a measurable space, and let $\os$ be a \smc\  partially ordered vector space. A \emph{positive $\osext$-valued measure} is a map $\npm:\alg\rightarrow \pososext$ such that:
\begin{enumerate}
\item\label{2_part:pososext_valued_measure_1}
$\npm(\emptyset)=0$;
\item\label{2_part:pososext_valued_measure_2}
($\sigma$-additivity) whenever $\seq{\mss}$ is a pairwise disjoint sequence in $\alg$ with $\bigcup_{n=1}^\infty\mss_n\in\alg$, then
 \begin{equation}\label{2_eq:sigma_additivity}
 \npm\left(\bigcup_{n=1}^\infty\mss_n\right)=\bigvee_{N=1}^\infty\sum_{n=1}^N\npm(\mss_n)
 \end{equation}
 in $\osext$.
\end{enumerate}
\end{definition}

Since $\npm$ is $\pososext$-valued, it follows from the \smc ness of $\os$ that the supremum in the right hand side of \cref{2_eq:sigma_additivity} exists in $\osext$; see part~(1) of \cite[Lemma~2.5]{de_jeu_jiang:2021a}.  When $\npm(\pset)\in\posos$, then we say that $\npm$ is \emph{finite}, or that it is \emph{$\os$-valued}; when $\npm(\pset)=\infty$, then $\npm$ is said to be \emph{infinite}. As \cite[Section~4]{de_jeu_jiang:2021a} shows, a good number of the properties of positive $\Rext$-valued measures hold in the general case as well, including even the Borel--Cantelli lemmas (see \cite[Lemma~4.7]{de_jeu_jiang:2021a}).

Next, we introduce the integral with respect to a measure. From now on, we suppose that $\alg$ is a $\sigma$-algebra.

A measurable function $\varphi:\pset\to\posR$ is an \emph{elementary function} if it takes only finitely many (finite) values. It can be written as a finite sum $\varphi=\sum_{i=1}^n r_i\indicator{\mss_i}$ for some $r_1,\dotsc,r_n\in\posR$ and $\mss_1,\dots,\mss_n\in\alg$. Here the $r_i$ are all finite, but it is allowed that $\npm(\mss_i)=\infty$ for some of the $\mss_i$. We define its (order) integral, which is an element of $\pososext$, by setting
\[
\ointm{\varphi}\coloneqq\sum_{i=1}^n r_i\npm(\mss_i).
\]
This definition is independent of the above expression for $\varphi$ as a finite sum. When $f:\pset\to\posRext$  is measurable, we choose a sequence $\seq{\varphi}$ of elementary functions such that $\varphi_n\uparrow f$ pointwise in $\posRext$. We define the \emph{order integral} of $f$, which is an element of $\pososext$, by
\[
\ointm{f}\coloneqq \bigvee_{n=1}^\infty \ointm{\varphi_n}.
\]
Since $\ointm{\varphi_n}\uparrow$, the \smc ness of $\os$ guarantees that this supremum exists in $\pososext$. It is independent of the choice of the sequence $\seq{\varphi}$.

We let $\integrablefun$ denote the set of all (finite-valued) measurable functions $f:\pset\to\RR$ such that $\ointm{\abs{f}}$ is finite. By splitting a function into its positive and negative parts, the order integral is then defined on $\integrablefun$. We let $\boundedmeasfun$ denote the bounded measurable functions on $\pset$. For a finite measure $\npm$, $\boundedmeasfun\subseteq\integrablefun$.

In \cite[Section~6.2]{de_jeu_jiang:2021a}, the monotone convergence theorem for the order integral is established. When $\os$ is \sDc, then Fatou's lemma and the dominated convergence theorem are also valid.

The space  $\integrablefun$ is a \sDc\ vector lattice, and the order integral is a $\sigma$-order continuous positive operator from  $\integrablefun$ into $\os$; see \cite[Proposition~6.14]{de_jeu_jiang:2021a}. According to \cite[Theorem~6.17]{de_jeu_jiang:2021a}, the space $\ellone$, where elements of $\integrablefun$ have been identified when they agree $\npm$-almost everywhere, is likewise a \sDc\ vector lattice, and the order integral induces a strictly positive $\sigma$-order continuous operator $\opintm$ from $\ellone$ into $\os$. When $\os$ is monotone complete and has the countable sup property, then $\ellone$ is a Dedekind complete vector lattice with the countable sup property, and $\opintm$ is order continuous.\footnote{As in \cite[Section~6]{de_jeu_jiang:2021a}, we say that a partially ordered vector space $\os$ has the \emph{countable sup property} when, for every net $\net{x}\subseteq\posos$ and $x\in\pos{\os}$ such that $x_\lambda\uparrow x$, there exists an at most countably infinite set of indices $\{\lambda_n: n\geq 1 \}$ such that $x=\sup_{n\geq 1} x_{\lambda_n}$. In this case, there also always exist indices $\lambda_1\leq\lambda_2\leq\dotsb$ such that $x_{\lambda_n}\uparrow x$. For vector lattices, this definition coincides with the usual one as can be found on, e.g., \cite[p.~103]{aliprantis_burkinshaw_POSITIVE_OPERATORS_SPRINGER_REPRINT:2006}.}

\begin{remark}\label{2_rem:comparison_with_vector_measures_and_sot_sigma_additivity}\quad
	\begin{enumerate}
		\item\label{2_part:comparison_with_vector_measures_and_sot_sigma_additivity_1}
	When $E$ is a partially ordered Banach space with a closed positive cone, then every positive vector measure is a measure in the sense of \cref{2_def:positive_pososext_valued_measure}, but not conversely. Even when a measure falls into both categories, the domain of the order integral can properly contain that of any reasonably defined integral with respect to the vector measure using Banach space methods.
	We refer to \cite[Section~7]{de_jeu_jiang:2021a} for this discussion.
	\item\label{2_part:comparison_with_vector_measures_and_sot_sigma_additivity_2}
	When $\os$ consists of the regular operators on a Banach lattice with an order continuous norm, or when $\os$ consists of the self-adjoint operators in a strongly closed complex linear subspace of the bounded operator on a complex Hilbert space, then the finite measures in the sense of \cref{2_def:positive_pososext_valued_measure} are precisely the set maps $\npm:\alg\to\pos{E}$ with $\npm(\emptyset)=0$ that are $\sigma$-additive with respect to the strong operator topology on $\os$; see \cite[Lemmas~4.2 and~4.3]{de_jeu_jiang:2021a}.
	\end{enumerate}
\end{remark}

\section{Measures on locally compact Hausdorff spaces}\label{2_sec:measures_on_locally_compact_hausdorff_spaces}

\noindent There may be several measures on the Borel $\sigma$-algebra of a locally compact Hausdorff space $\ts$ that represent a given positive functional $\posmap: \contcts\to\RR$, but there is only one that is a regular Borel measure. In our vector-valued context, we shall have similar results. In this section, we shall define and investigate the pertinent regularity properties. The general situation is somewhat more involved than the real case.

\medskip

\noindent When $\ts$ is a locally compact Hausdorff space, then we let $\borel$ denote its Borel $\sigma$-algebra, i.e., $\borel$ is the $\sigma$-algebra that is generated by the open subsets of $\ts$.

When $\os$ is a \smc\ partially ordered vector space, then we let $\posextmeasts$ denote the collection of all positive $\osext$-valued measures $\npm:\borel\to\pososext$.

We distinguish the following regularity properties.

\begin{definition}\label{2_def:regularity_of_measures}
Let $\ts$ be a locally compact Hausdorff space, let $\os$ be a \mc\ partially ordered vector space, and let $\npm\in\posextmeasts$. Then $\npm$ is called:
\begin{enumerate}
\item\label{2_part:regularity_of_measures_1}
a \emph{Borel measure \uppars{on $\ts$}} when $\npm(K)\in \os$ for all compact subsets $K$ of~$\ts$;
\item\label{2_part:regularity_of_measures_2}
\emph{inner regular at $\mss\in\borel$} when $\npm(\mss)=\bigvee\{\npm(K): K\ \text{is compact and}\ K\subseteq \mss\}$ in~$\osext$;
\item\label{2_part:regularity_of_measures_3}
\emph{\wir\ at $\mss\in\borel$} when $\npm(\mss)=\bigvee\{\npm(\mss\cap K) :  K \text{ is compact}\}$ in~$\osext$;
\item\label{2_part:regularity_of_measures_4}
\emph{outer regular at $\mss\in\borel$} if $\npm(\mss)=\bigwedge\{\npm(V): V\ \text{is open and}\ \mss\subseteq V\}$ in~$\osext$;
\item\label{2_part:regularity_of_measures_5}
a \emph{regular Borel measure \uppars{on $\ts$}} when it is a Borel measure on $\ts$ that is inner regular at all open subsets of $\ts$ and outer regular at all Borel sets;
\item\label{2_part:regularity_of_measures_6}
a \emph{quasi-regular Borel measure \uppars{on $\ts$}} when it is a Borel measure on $\ts$ that is inner regular at all open subsets of $\ts$ and \wir\ at all Borel sets.
\end{enumerate}
\end{definition}

Note that the three subsets of $\os$ occurring in the above definitions are all directed in the appropriate directions, so that the two suprema and the infimum exist in $\osext$ as a consequence of the \mc ness of $\os$; \smc ness would not have sufficed here. In the sequel, there will be similar cases where the fact that a set is directed guarantees that a supremum or infimum exists, but we refrain from observing this explicitly each and every time.

To be entirely consistent with the terminology in \cref{2_sec:preliminaries}, we would have to speak of, for example, a quasi-regular $\pososext$-valued Borel measure, but we shall consider the codomains of the measures in the remainder of this paper to be tacitly understood from the context.

We let $\rcompacto$ denote the collection of relatively compact open subsets of $\ts$, i.e., the collection of open subsets of $\ts$ with compact closure. Then $\rcompacto$ is closed under finite intersections and finite unions.
Since every relatively compact open subset of $\ts$ is contained in a compact one, and every compact subset of $\ts$ is contained in a relatively compact open set, it is easy to see that $\npm$ is \wir\ at a Borel set $\mss$ if and only if  $\npm(\mss)=\bigvee\{\npm(\mss\cap V) : V\in\rcompacto\}$ in $\osext$.

The next small result relates the two regularity properties of a quasi-regular Borel measure. Its easy proof is omitted.

\begin{lemma}
	Let $\ts$ be a locally compact Hausdorff space, let $\os$ be a \mc\ partially ordered vector space, let $\npm\in\posextmeasts$, and let $\mss\in\borel$.
	If $\npm$ is inner regular at $\mss$, then $\npm$ is \wir\ at $\mss$.
\end{lemma}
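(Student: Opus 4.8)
The plan is to compare the two relevant suprema directly, using the observation made just before the lemma that weak inner regularity at $\mss$ is equivalent to $\npm(\mss)=\bigvee\{\npm(\mss\cap V):V\in\rcompacto\}$ in $\osext$, together with the elementary fact that $\rcompacto$-sets and compact sets are cofinal in each other. First I would note that, since $\npm$ is monotone, both families
\[
\mathcal{K}\coloneqq\{\npm(K):K\subseteq\mss,\ K\text{ compact}\}
\qquad\text{and}\qquad
\mathcal{S}\coloneqq\{\npm(\mss\cap V):V\in\rcompacto\}
\]
are directed upward in $\posos$, so that by \mc ness of $\os$ both $\bigvee\mathcal{K}$ and $\bigvee\mathcal{S}$ exist in $\osext$. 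The hypothesis is $\npm(\mss)=\bigvee\mathcal{K}$, and the goal is $\npm(\mss)=\bigvee\mathcal{S}$.

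The key step is a sandwiching argument. On the one hand, every $\npm(\mss\cap V)$ with $V\in\rcompacto$ satisfies $\npm(\mss\cap V)\le\npm(\mss)$ by monotonicity (using $\mss\cap V\subseteq\mss$ in $\borel$), so $\npm(\mss)$ is an upper bound for $\mathcal{S}$, whence $\bigvee\mathcal{S}\le\npm(\mss)$ in $\osext$. On the other hand, I would show that $\bigvee\mathcal{K}\le\bigvee\mathcal{S}$: given a compact $K\subseteq\mss$, choose a relatively compact open $V$ with $K\subseteq V$ (possible since $\ts$ is locally compact Hausdorff and $K$ is compact); then $K=\mss\cap K\subseteq\mss\cap V$, so $\npm(K)\le\npm(\mss\cap V)\le\bigvee\mathcal{S}$. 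Hence $\bigvee\mathcal{S}$ is an upper bound for $\mathcal{K}$, so $\npm(\mss)=\bigvee\mathcal{K}\le\bigvee\mathcal{S}$. Combining the two inequalities gives $\npm(\mss)=\bigvee\mathcal{S}$, i.e.\ $\npm$ is \wir\ at $\mss$.

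There is essentially no obstacle here — the only things one must be slightly careful about are that the comparisons are all taking place in $\osext$ rather than in $\os$ (so that $\infty$ is a permitted value and the listed arithmetic/order compatibilities from \cite[Lemmas~2.3 to~2.5]{de_jeu_jiang:2021a} apply), and that the existence of the suprema is justified by directedness plus \mc ness as already remarked after \cref{2_def:regularity_of_measures}. Indeed this is exactly why the authors say "its easy proof is omitted"; the argument above is short enough that it could simply be inlined, but it is routine monotonicity bookkeeping rather than anything requiring real work.
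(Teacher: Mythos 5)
Your argument is correct and is exactly the routine monotonicity bookkeeping the authors had in mind when omitting the proof. One small remark: the detour through $\rcompacto$ is not needed, since for a compact $K\subseteq\mss$ one has $K=\mss\cap K$, so $\npm(K)=\npm(\mss\cap K)$ already lies in the family $\{\npm(\mss\cap K): K\text{ compact}\}$ from \cref{2_def:regularity_of_measures}\ref{2_part:regularity_of_measures_3}; that family is therefore sandwiched between the inner-regularity family and the upper bound $\npm(\mss)$, and the conclusion follows at once. Either way the proof is complete.
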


\begin{remark}
	The terminology `quasi-regular' is admittedly not very descriptive, but it is hard to find a more appealing nomenclature. It is also used in \cite[p.~108]{wright:1969} as well as in \cite[p.~195]{wright:1971a} by the same author. It denotes different notions in these two papers, and these are also both different from ours.
\end{remark}

We shall write $\posextBmeas$ for the Borel measures on $\ts$, $\posextrBmeas$ for the regular Borel measures on  $\ts$, $\posextqrBmeas$ for the quasi-regular Borel measures on $\ts$, $\posrBmeas$ for the finite regular (Borel) measures on $\ts$, and $\posqrBmeas$ for the finite quasi-regular (Borel) measures on $\ts$. Every finite measure is, of course, a Borel measure.

If $U$ is an open subset of $\ts$, then $U$ is a locally compact Hausdorff space in the induced topology. We shall denote its Borel $\sigma$-algebra by $\borel_U$. It is then easy to see that $\borel_U=\{\mss\in\borel : \mss\subseteq U\}$, so that we can identify $\borel_U$ with a subset of $\borel$.
 If $\npm\in\posextmeasts$, then we let $\npm_U$ denote the restriction of $\npm$ to the subset $\borel_U$ of $\borel$. It is then a positive $\osext$-valued measure on $\borel_U$.

We collect a few technical observations in the following result. The proofs are elementary.

\begin{lemma}\label{2_res:technical_results_about_restrictions_of_measures}
Let $\ts$ be a locally compact Hausdorff space, let $\os$ be a \mc\ partially ordered vector space, let $\npm\in\posextmeasts$, and let $U$ be an open subset of $\ts$.
\begin{enumerate}
\item\label{2_part:technical_results_about_restrictions_of_measures_1}
If $\npm$ is a Borel measure on $\ts$, then $\npm_U$ is a Borel measure on $U$.
\item\label{2_part:technical_results_about_restrictions_of_measures_2}
If $\npm$ is inner regular at $\mss\in\borel_U$, then so is $\npm_U$; and conversely.
\item\label{2_part:technical_results_about_restrictions_of_measures_3}
If $\npm$ is outer regular at $\mss\in\borel_U$, then so is $\npm_U$; and conversely.
\item\label{2_part:technical_results_about_restrictions_of_measures_4}
If $\npm$ is a Borel measure on $\ts$ and if $U\in\rcompacto$, then $\npm_U$ is a finite Borel measure on $U$.
\item\label{2_part:technical_results_about_restrictions_of_measures_5}
If $\npm$ is a Borel measure on $U$ for all $U\in\rcompacto$, then $\npm$ is a finite Borel measure on $U$ for all $U\in\rcompacto$.
\end{enumerate}
\end{lemma}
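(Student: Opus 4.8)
The plan is to reduce the five assertions to a small number of elementary observations, the only one requiring any real thought being part~\ref{2_part:technical_results_about_restrictions_of_measures_3}. First I would record the standing facts that, since $U$ is open in $\ts$, the induced topology makes $U$ into a locally compact Hausdorff space; that a subset of $U$ is compact in $U$ exactly when it is a compact subset of $\ts$ contained in $U$; that a subset of $U$ is open in $U$ exactly when it is open in $\ts$ and contained in $U$; and that $\borel_U=\{\mss\in\borel:\mss\subseteq U\}$ with $\npm_U$ the restriction of $\npm$ to this set. I would also use the monotonicity of $\npm$ (immediate from $\sigma$-additivity, finite additivity, and the compatibility of the order on $\osext$ with addition) together with the remark that an $x\in\osext$ with $x\leq y$ for some $y\in\os$ is automatically finite, since $\largest\leq y$ would force $y=\largest$. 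Granting these, part~\ref{2_part:technical_results_about_restrictions_of_measures_1} is immediate, a compact $K\subseteq U$ being a compact subset of $\ts$, so $\npm_U(K)=\npm(K)\in\os$; and part~\ref{2_part:technical_results_about_restrictions_of_measures_2} holds in both directions because the family of compact subsets of $\mss$, and the values $\npm$ and $\npm_U$ assume on these sets and on $\mss$, literally coincide whether $\mss$ is viewed inside $\ts$ or inside $U$, so the identity defining inner regularity at $\mss$ is the same statement for $\npm$ and for $\npm_U$.

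The one part with genuine, if modest, content is part~\ref{2_part:technical_results_about_restrictions_of_measures_3}: here outer regularity of $\npm$ at $\mss$ infimizes $\npm(V)$ over the open $V\subseteq\ts$ containing $\mss$, while outer regularity of $\npm_U$ infimizes over the open subsets of $U$ containing $\mss$, i.e.\ over the open $V\subseteq\ts$ with $\mss\subseteq V\subseteq U$. My plan is to prove that these two infima are equal in $\osext$ — both exist by the \mc ness of $\os$, the relevant sets being downward directed — from which the claim follows since $\npm_U(\mss)=\npm(\mss)$. One inequality is clear, as the second family of elements is contained in the first and $\npm_U$ agrees with $\npm$ there, so the first infimum is $\leq$ the second. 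For the reverse I would, given an open $V\subseteq\ts$ with $\mss\subseteq V$, pass to $V\cap U$, which is open in $\ts$, satisfies $\mss\subseteq V\cap U\subseteq U$, and has $\npm(V\cap U)\leq\npm(V)$ by monotonicity; this shows the second infimum is a lower bound for the first, hence $\leq$ it. Both directions of part~\ref{2_part:technical_results_about_restrictions_of_measures_3} drop out at once.

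For parts~\ref{2_part:technical_results_about_restrictions_of_measures_4} and~\ref{2_part:technical_results_about_restrictions_of_measures_5} I would argue through the closure $\overline U$. If $U\in\rcompacto$ and $\npm$ is a Borel measure on $\ts$, then $\overline U$ is compact, so $\npm(\overline U)\in\os$; since $U\subseteq\overline U$, monotonicity gives $\npm_U(U)=\npm(U)\leq\npm(\overline U)\in\os$, so $\npm_U(U)$ is finite and $\npm_U$ is a finite Borel measure on $U$ (by part~\ref{2_part:technical_results_about_restrictions_of_measures_1}); that is part~\ref{2_part:technical_results_about_restrictions_of_measures_4}. For part~\ref{2_part:technical_results_about_restrictions_of_measures_5} the hypothesis cannot be applied to $U$ itself, since $U$ need not be compact; so, fixing $U\in\rcompacto$, I would use local compactness of $\ts$ to pick $W\in\rcompacto$ with $\overline U\subseteq W$, note that $\overline U$ is then a compact subset of $W$, and apply the hypothesis for $W$ to get $\npm(\overline U)=\npm_W(\overline U)\in\os$; then $\npm_U(U)=\npm(U)\leq\npm(\overline U)\in\os$ finishes it as in part~\ref{2_part:technical_results_about_restrictions_of_measures_4}. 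The main (and rather mild) obstacles are the bookkeeping in part~\ref{2_part:technical_results_about_restrictions_of_measures_3} about which ambient space the open sets live in, and the need in part~\ref{2_part:technical_results_about_restrictions_of_measures_5} to enlarge $U$ to a relatively compact open set before the hypothesis becomes usable.
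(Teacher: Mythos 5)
Your proof is correct; the paper omits the argument entirely (``The proofs are elementary''), and what you have written is exactly the intended elementary argument: the identification of the compact and open subsets of $U$ with those of $\ts$ contained in $U$, the cofinality trick $V\mapsto V\cap U$ for the outer-regularity infima, and the passage through $\overline{U}$ (with an enlarging $W\in\rcompacto$ in part~(5)) together with the observation that an element of $\osext$ dominated by a finite element is finite. Nothing to add.
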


The following is then clear.

\begin{lemma}\label{2_res:restricted_measure_is_regular_borel_measure}
Let $\ts$ be a locally compact Hausdorff space and let $\os$ be a \mc\ partially ordered vector space.

If $\npm\in\posextmeasts$ is a regular Borel measure on $\ts$, then, for all open subsets $U$ of $\ts$, $\npm_U$ is a regular Borel measure on $U$.
\end{lemma}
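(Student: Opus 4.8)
The plan is to derive this as an immediate consequence of \cref{2_res:technical_results_about_restrictions_of_measures}, by checking the three defining clauses of a regular Borel measure on $U$ (see \cref{2_def:regularity_of_measures}\,\ref{2_part:regularity_of_measures_5}) for $\npm_U$ one at a time. Recall that, since $\npm\in\posextmeasts$, its restriction $\npm_U$ is a positive $\osext$-valued measure on $\borel_U$, so that it makes sense to ask whether it is a regular Borel measure on the locally compact Hausdorff space $U$.

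First I would note that $\npm_U$ is a Borel measure on $U$: this is precisely part~\ref{2_part:technical_results_about_restrictions_of_measures_1} of \cref{2_res:technical_results_about_restrictions_of_measures}, using that $\npm$ is a Borel measure on $\ts$. Next, for inner regularity at open subsets of $U$, the key point is that, since $U$ is open in $\ts$, every subset $V$ of $U$ that is open in $U$ is in fact open in $\ts$, and moreover $V\in\borel_U$. As $\npm$ is inner regular at every open subset of $\ts$, it is in particular inner regular at $V$, and part~\ref{2_part:technical_results_about_restrictions_of_measures_2} of \cref{2_res:technical_results_about_restrictions_of_measures} then yields that $\npm_U$ is inner regular at $V$. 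Finally, for outer regularity at the Borel sets of $U$: given $\mss\in\borel_U$, we have $\mss\in\borel$, so $\npm$ is outer regular at $\mss$ because it is outer regular at every Borel set of $\ts$; part~\ref{2_part:technical_results_about_restrictions_of_measures_3} of \cref{2_res:technical_results_about_restrictions_of_measures} then gives that $\npm_U$ is outer regular at $\mss$. Combining the three clauses shows that $\npm_U$ is a regular Borel measure on $U$.

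There is no real obstacle here; all the substance is already packaged in the (elementary) \cref{2_res:technical_results_about_restrictions_of_measures}. The only point requiring a moment's care is purely topological: ``inner regular at $\mss$'' and ``outer regular at $\mss$'' refer to the compact, resp.\ open, subsets of the \emph{ambient} space, so passing between $\ts$ and $U$ is not entirely formal. But compactness of a subset is intrinsic, and the equivalences in parts~\ref{2_part:technical_results_about_restrictions_of_measures_2} and~\ref{2_part:technical_results_about_restrictions_of_measures_3} of \cref{2_res:technical_results_about_restrictions_of_measures} are exactly what bridges the ambient-space and subspace formulations; once that is invoked, the argument is the short chain of implications above.
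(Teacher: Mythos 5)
Your proof is correct and is exactly the argument the paper has in mind: the paper states this lemma with no proof at all, introducing it with ``The following is then clear'' immediately after \cref{2_res:technical_results_about_restrictions_of_measures}, and your verification of the three clauses via parts~\ref{2_part:technical_results_about_restrictions_of_measures_1}--\ref{2_part:technical_results_about_restrictions_of_measures_3} of that lemma (together with the observation that open subsets of the open set $U$ are open in $\ts$ and that compactness is intrinsic) is precisely the intended routine check.
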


If $\os=\RR$, then every regular Borel measure on $\ts$ is inner regular at the $\sigma$-finite Borel sets; see \cite[Proposition~7.5]{folland_REAL_ANALYSIS_SECOND_EDITION:1999}. If $\os$ is normal, an analogous result holds in our case for Borel sets of finite measure. The step from the case of finite measure to the $\sigma$-finite case as in the proof of \cite[Proposition~7.5]{folland_REAL_ANALYSIS_SECOND_EDITION:1999} uses the linear ordering of $\RR$ and does not seem to have an obvious analogue in the general situation.

\begin{proposition}\label{2_res:mu_is_inner_regular_at_finite_borel_sets}
Let $\ts$ be a locally compact Hausdorff space, let $\os$ be a \mc\ and normal partially ordered vector space, and let $\npm\in\posextrBmeas$.

Then $\npm$ is inner regular at all Borel sets of finite measure.
\end{proposition}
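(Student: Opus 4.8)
The plan is to scalarise the problem using the positive order continuous functionals on $\os$ and then invoke the classical inner‑regularity theorem for real‑valued regular Borel measures. Fix $\mss\in\borel$ with $\npm(\mss)\in\os$. The compact subsets of $\mss$, directed by inclusion, yield via the monotonicity of $\npm$ an increasing net $\{\npm(K)\}$ in $\os$ (the values $\npm(K)$ lie in $\os$ since $\npm$ is a Borel measure) that is bounded above by $\npm(\mss)$; by \mc ness its supremum exists in $\os$, and, being finite, it is also its supremum in $\osext$, so ``$\npm$ is inner regular at $\mss$'' is exactly the statement that this supremum equals $\npm(\mss)$. Since $\os$ is normal, \cref{2_res:inf_and_sup_via_order_dual}\ref{2_part:sup} reduces the proof to showing, for each $x^\prime\in\pos{(\ocdualos)}$, that
\[
\f{\npm(\mss),x^\prime}=\sup\{\f{\npm(K),x^\prime}:K\subseteq\mss\text{ compact}\}.
\]

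Fix such an $x^\prime$. First I would choose an open set $V$ with $\mss\subseteq V$ and $\npm(V)\in\os$; this is possible, since $\npm$ is outer regular at $\mss$, so that $\npm(\mss)=\bigwedge\{\npm(W):W\text{ open},\,\mss\subseteq W\}$ in $\osext$, and if every such $\npm(W)$ equalled $\largest$ this infimum would be $\largest\neq\npm(\mss)$. The set $V$ is then itself a locally compact Hausdorff space, $\mss\in\borel_V$, and by \cref{2_res:restricted_measure_is_regular_borel_measure} the restriction $\npm_V$ is a regular Borel measure on $V$; since $\npm_V(V)=\npm(V)\in\os$, it is in addition finite.

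Next I would check that the set function $\npm_{V,x^\prime}$ on $\borel_V$ defined by $\npm_{V,x^\prime}(\msstwo)\coloneqq\f{\npm_V(\msstwo),x^\prime}$ is a finite, real‑valued regular Borel measure on $V$. Its countable additivity follows from \cref{2_res:inf_and_sup_via_order_dual}\ref{2_part:sup} applied to the partial‑sum sequences of disjoint decompositions, which (by finiteness of $\npm_V$) increase in $\os$ to the measure of the union; inner regularity at the open subsets of $V$ follows from \cref{2_res:inf_and_sup_via_order_dual}\ref{2_part:sup} applied to the increasing nets of measures of compact subsets, and outer regularity at the Borel subsets of $V$ from \cref{2_res:inf_and_sup_via_order_dual}\ref{2_part:inf} applied to the decreasing nets of measures of open supersets; and $\npm_{V,x^\prime}$ is trivially finite on compacta. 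Since $\npm_{V,x^\prime}(\mss)=\f{\npm(\mss),x^\prime}$ is finite, \cite[Proposition~7.5]{folland_REAL_ANALYSIS_SECOND_EDITION:1999} shows that $\npm_{V,x^\prime}$ is inner regular at $\mss$; as compactness is intrinsic and $K\subseteq\mss\subseteq V$ gives $\npm_{V,x^\prime}(K)=\f{\npm(K),x^\prime}$, this is precisely the displayed identity, which finishes the argument.

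The one step that genuinely needs care, and which I expect to be the main obstacle, is the reduction to a finite‑measure open neighbourhood $V$ of $\mss$ before scalarising. For an open set $W$ with $\npm(W)=\largest$ there is no reason that $\sup\{\f{\npm(K),x^\prime}:K\subseteq W\text{ compact}\}$ should be infinite, so the naive scalarisation $\msstwo\mapsto\f{\npm(\msstwo),x^\prime}$ on all of $\borel$ need not be a regular Borel measure, and the classical theorem could not be applied to it directly; outer regularity of $\npm$ together with the finiteness of $\npm(\mss)$ is exactly what supplies a suitable $V$. Beyond this, everything is the routine bookkeeping of passing between order‑theoretic suprema and infima in $\osext$ and numerical ones by means of \cref{2_res:inf_and_sup_via_order_dual}.
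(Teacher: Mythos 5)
Your proof is correct, and it reaches the conclusion by a genuinely different route from the one in the paper. Both arguments scalarise via the positive order continuous functionals and use \cref{2_res:inf_and_sup_via_order_dual} to pass between order-theoretic and numerical suprema, but after that they diverge: the paper fixes $x^\prime$ and $\varepsilon$ and reproves the classical finite-measure inner-regularity argument by hand (choosing an open $V\supseteq\mss$ and $W\supseteq V\setminus\mss$ of nearly minimal $x^\prime$-measure, a compact $K_1\subseteq V$ of nearly maximal $x^\prime$-measure, and taking $K_2=K_1\setminus W$), whereas you first localise to a finite-measure open neighbourhood $V$ of $\mss$, verify that $\msstwo\mapsto\f{\npm_V(\msstwo),x^\prime}$ is an honest finite regular Borel measure on $V$, and then quote \cite[Proposition~7.5]{folland_REAL_ANALYSIS_SECOND_EDITION:1999} as a black box. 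The localisation step is exactly the right price to pay for this reduction: as you observe, the naive scalarisation on all of $\borel$ is not even well defined (let alone regular) where $\npm$ is infinite, and the paper sidesteps this by only ever evaluating $x^\prime$ at sets of finite measure, writing outer regularity at $\mss$ as an infimum over open sets of finite measure. What your approach buys is brevity and a clean separation between the vector-valued bookkeeping and the classical measure theory; what it costs is the verification that the scalarised set function is a regular Borel measure, which the paper only records later (as \cref{2_res:relation_between_vector-valued_and_real_measures}, in the section following this proposition) and which you correctly re-derive from \cref{2_res:inf_and_sup_via_order_dual} so as not to rely on anything stated later. Your extra care in checking that the supremum over compact $K\subseteq\mss$, which a priori exists only in $\os$, is also the supremum in $\osext$ is a point the paper glosses over.
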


\begin{proof}
Let $\mss \in\borel$ be such that $\npm(\mss )\in \os$. We are to prove that
\[
\npm(\mss )=\bigvee\{\npm(K): K\ \text{is compact and}\ K\subseteq \mss \}.
\]

It is clear that $\npm(\mss )$ is an upper bound of $\{\npm(K) : K\ \text{is compact and}\ K\subseteq \mss \}$. Using the normality of $\os$, we shall show that it is the least upper bound.

Choose and fix $x^\prime\in\pos{(\ocdual{E})}$ and $\varepsilon>0$.

Since $\npm(\mss )\in \os$, the outer regularity of $\npm$ at $\mss $ can be written as
\[
\npm(\mss )=\bigwedge\{\npm(V): V\text{ is open, }\mss \subseteq V\text{, and }\npm(V)\in\os\}.
\]
\cref{2_res:inf_and_sup_via_order_dual} then shows that
\[
\f{\npm(\mss ),x^\prime}=\inf\{\f{\npm(V),x^\prime}: V\text{ is open, }\mss \subseteq V\text{, and }\npm(V)\in\os\}.
\]
We can then choose and fix an open subset $V$ such that $\mss \subseteq V$, $\npm(V)\in\os$, and
\[
\f{\npm(V),x^\prime}<\f{\npm(\mss ),x^\prime}+\varepsilon/3.
\]
Using the outer regularity of $\npm$ at $V\setminus \mss $, which also has finite measure, we can likewise choose and fix an open subset $W$ such that $V\setminus \mss \subseteq W$, $\npm(W)\in E$, and
\[
\f{\npm(W),x^\prime}<\f{\npm(V\setminus \mss ),x^\prime}+\varepsilon/3.
\]
By replacing $W$ with the open subset $W\cap V$ we may and shall suppose that $W\subseteq V$. We have
\[
\f{\npm(W),x^\prime}<\f{\npm(V\setminus \mss ),x^\prime}+\varepsilon/3=\f{\npm(V),x^\prime}-\f{\npm(\mss ),x^\prime}+\varepsilon/3<2\varepsilon/3.
\]
Using the inner regularity of $\npm$ at the open subset $V$ and the normality of $\os$, an application of \cref{2_res:inf_and_sup_via_order_dual} enables us to choose and fix a compact subset $K_1$ such that $K_1\subseteq V$ and
\[
\f{\npm(V),x^\prime}<\f{\npm(K_1),x^\prime}+\varepsilon/3.
\]
Set $K_2\coloneqq K_1\setminus W$. Then $K_2$ is compact and
\begin{align*}
K_2&=K_1\cap W^\comp\\
&\subseteq K_1 \cap (V\setminus \mss )^\comp\\
&=(K_1\cap V^\comp)\cup(K_1\cap \mss )\\
&=K_1\cap \mss \\
&\subseteq \mss .
\end{align*}
We also have
\begin{align*}
V\setminus K_2&=V\cap(K_1^\comp\cup W)\\
&=(V\cap K_1^\comp)\cup (V\cap W)\\
&=(V\setminus K_1)\cup W.
\end{align*}

Combining all of the above, we see that
\begin{align*}
\f{\npm(\mss )-\npm(K_2),x^\prime}&=\f{\npm(\mss \setminus K_2),x^\prime}\\
&\leq\f{\npm(V\setminus K_2),x^\prime}\\
&=\f{\npm\left[(V\setminus K_1)\cup W\right],x^\prime}\\
&\leq\f{\npm(V\setminus K_1),x^\prime}+\f{\npm(W),x^\prime}\\
&=\f{\npm(V),x^\prime}-\f{\npm(K_1),x^\prime}+\f{\npm(W),x^\prime}\\
&<\varepsilon + 2\varepsilon/3\\
&=\varepsilon.
\end{align*}
All in all, we have found a compact subset $K_2$ of $\mss $ (depending on $x^\prime$ and $\varepsilon$) such that $\f{\npm(\mss ),x^\prime}< \f{\npm(K_2),x^\prime}+3\epsilon$. Hence we conclude, for our fixed $x^\prime\in\pos{(\ocdualos)}$, that
\[
\f{\npm(\mss ),x^\prime}=\sup\{\f{\npm(K),x^\prime} : K \text{ is compact and } K\subseteq \mss \}.
\]
Since this is true for all $x^\prime\in\pos{(\ocdualos)}$, an appeal to \cref{2_res:inf_and_sup_via_order_dual} completes the proof.
\end{proof}

We can use this to prove our next result.

\begin{corollary}\label{2_res:inner_regularity_of_quasi-regular_measure_for_normal_spaces}
Let $\ts$ be a locally compact Hausdorff space, let $\os$ be a \mc\ and normal partially ordered vector space, and let $\npm\in\posextmeasts$ be such that, for all $U\in\rcompacto$, the restricted measure $\npm_U$ is a regular Borel measure on $U$.

Suppose that $\mss$ is a Borel set such that $\npm$ is \wir\ at $\mss$. Then $\npm$ is inner regular at $\mss$.
\end{corollary}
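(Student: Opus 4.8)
The plan is to deduce inner regularity at $\mss$ from the inner regularity at Borel sets of finite measure that was already established in \cref{2_res:mu_is_inner_regular_at_finite_borel_sets}, but applied on each relatively compact open set $V\subseteq\ts$ rather than on $\ts$ itself, and then to glue the resulting local statements together by means of the weak inner regularity of $\npm$ at $\mss$.

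First I would note that $\npm_V$ is \emph{finite} for every $V\in\rcompacto$. By hypothesis each such $\npm_V$ is a regular Borel measure on $V$, hence in particular a Borel measure on $V$, so \cref{2_res:technical_results_about_restrictions_of_measures}\ref{2_part:technical_results_about_restrictions_of_measures_5} shows that $\npm_V$ is actually a finite Borel measure on $V$. Consequently $\npm(V)\in\os$, and since $\mss\cap V\subseteq V$ the monotonicity of $\npm$ gives $\npm(\mss\cap V)\in\os$ for all $V\in\rcompacto$.

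Next, fix $V\in\rcompacto$. Equipped with the induced topology, $V$ is itself a locally compact Hausdorff space, $\npm_V$ is a regular Borel measure on it, and $\mss\cap V\in\borel_V$ has finite $\npm_V$-measure by the previous paragraph. Applying \cref{2_res:mu_is_inner_regular_at_finite_borel_sets} with $V$ in the role of $\ts$ yields
\[
\npm(\mss\cap V)=\npm_V(\mss\cap V)=\bigvee\{\npm_V(K): K\subseteq \mss\cap V \text{ and } K \text{ is compact}\}\quad\text{in }\osext.
\]
Since compactness is intrinsic, the compact subsets of $V$ are precisely the compact subsets of $\ts$ contained in $V$, and $\npm_V$ agrees with $\npm$ on $\borel_V$; hence every term on the right is of the form $\npm(K)$ with $K\subseteq\mss$ compact, so $\npm(\mss\cap V)\leq s$, where $s\coloneqq\bigvee\{\npm(K): K\subseteq\mss \text{ and } K \text{ is compact}\}$ exists in $\osext$ because the set is directed and $\os$ is \mc.

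Finally I would combine this with weak inner regularity. Using the reformulation of weak inner regularity in terms of $\rcompacto$ recorded after \cref{2_def:regularity_of_measures}, we get $\npm(\mss)=\bigvee\{\npm(\mss\cap V): V\in\rcompacto\}\leq s$. Conversely, by monotonicity $\npm(\mss)$ is an upper bound of $\{\npm(K): K\subseteq\mss \text{ and } K \text{ is compact}\}$, so $s\leq\npm(\mss)$. Therefore $\npm(\mss)=s$, which is exactly the assertion that $\npm$ is inner regular at $\mss$. I do not expect a serious obstacle; the only points that need care are establishing the finiteness of $\npm_V$ and invoking \cref{2_res:mu_is_inner_regular_at_finite_borel_sets} with the ambient space taken to be $V$ rather than $\ts$.
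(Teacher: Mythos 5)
Your proof is correct and follows essentially the same route as the paper: apply \cref{2_res:mu_is_inner_regular_at_finite_borel_sets} to the finite regular Borel measure $\npm_V$ on each $V\in\rcompacto$ (with $V$ as the ambient locally compact space), and then use weak inner regularity at $\mss$ to pass to the global supremum over compact subsets of $\mss$. Your explicit justification of the finiteness of $\npm(\mss\cap V)$ via \cref{2_res:technical_results_about_restrictions_of_measures}\ref{2_part:technical_results_about_restrictions_of_measures_5} is a point the paper leaves implicit, and the paper closes the argument with a chain of equalities (using that every compact $K\subseteq\mss$ lies in $\mss\cap U$ for some $U\in\rcompacto$) where you use two inequalities, but these are only cosmetic differences.
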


\begin{proof}
If $U\in\rcompacto$, then $\mss\cap U\in\borel_U$ has finite measure. Hence an application of \cref{2_res:mu_is_inner_regular_at_finite_borel_sets} to the regular Borel measure $\npm_U$ on $U$ shows that
\[
\npm(\mss \cap U)=\bigvee\{\npm(K) :  K \text{ is compact and } K\subseteq \mss \cap U\}.
\]
Hence
\begin{align*}
\npm(\mss )&=\bigvee\{\npm(\mss \cap U): U\in\rcompacto\}\\
&=\bigvee\{\npm(K) : K \text{ is compact and }  K\subseteq \mss \cap U \text{ for some } U\in\rcompacto\}\\
&=\bigvee\{\npm(K) : K \text{ is compact and }\ K\subseteq \mss\}
\end{align*}
in $\osext$.
The last equation holds since the local compactness of $\ts$ implies that, for any compact subset $K$ with $K\subseteq \mss$, there exists a relatively compact open subset $U$ such that $K\subseteq U$, and then $K\subseteq \mss \cap U$.
\end{proof}

This has the following consequence.

\begin{corollary}\label{2_res:mu_is_inner_regular_at_finite_borel_sets_if_all_mu_u_are_regular_borel_measures}
Let $\ts$ be a locally compact Hausdorff space, let $\os$ be a \mc\ and normal partially ordered vector space, and let $\npm\in\posextmeasts$ be such that, for all $U\in\rcompacto$, the restricted measure $\npm_U$ is a regular Borel measure on $U$.
\begin{enumerate}
\item\label{2_part:mu_is_inner_regular_at_finite_borel_sets_if_all_mu_u_are_regular_borel_measures_1}
If $\npm$ is \wir\ at all Borel sets, then $\npm$ is inner regular at all Borel sets.
\item\label{2_part:mu_is_inner_regular_at_finite_borel_sets_if_all_mu_u_are_regular_borel_measures_2}
If $\npm$ is a finite measure that is \wir\ at all Borel sets, then $\npm$ is both inner regular and outer regular at all Borel sets.
\end{enumerate}
\end{corollary}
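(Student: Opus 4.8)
The plan is to obtain both assertions from the immediately preceding \cref{2_res:inner_regularity_of_quasi-regular_measure_for_normal_spaces}, supplemented by a complementation argument that converts inner regularity into outer regularity in the finite case.

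For part~\ref{2_part:mu_is_inner_regular_at_finite_borel_sets_if_all_mu_u_are_regular_borel_measures_1} there is essentially nothing to do. The hypotheses in force\textemdash $\os$ \mc\ and normal, and $\npm_U$ a regular Borel measure on $U$ for every $U\in\rcompacto$\textemdash are precisely those of \cref{2_res:inner_regularity_of_quasi-regular_measure_for_normal_spaces}. Applying that corollary to an arbitrary Borel set $\mss$, at which $\npm$ is \wir\ by assumption, gives that $\npm$ is inner regular at $\mss$; since $\mss$ was arbitrary, $\npm$ is inner regular at all Borel sets.

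For part~\ref{2_part:mu_is_inner_regular_at_finite_borel_sets_if_all_mu_u_are_regular_borel_measures_2}, inner regularity at all Borel sets is supplied by part~\ref{2_part:mu_is_inner_regular_at_finite_borel_sets_if_all_mu_u_are_regular_borel_measures_1}, so it remains to prove outer regularity. Fix $\mss\in\borel$. Finiteness of $\npm$ makes $\npm(K)$, $\npm(V)$, $\npm(\mss)$ and $\npm(\mss^\comp)$ finite elements of $\os$ (each is $\leq\npm(\ts)\in\posos$), so one may compute with differences in $\os$, and finite additivity gives $\npm(\mss^\comp)=\npm(\ts)-\npm(\mss)$. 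The key observation is that for every compact $K\subseteq\mss^\comp$ the set $\ts\setminus K$ is open and contains $\mss$, with $\npm(\ts\setminus K)=\npm(\ts)-\npm(K)$; hence $\{\npm(\ts\setminus K): K\subseteq\mss^\comp\ \text{compact}\}$ is a subfamily of $\{\npm(V): V\ \text{open},\ \mss\subseteq V\}$. Taking infima, using that $x\mapsto\npm(\ts)-x$ reverses the order and therefore turns the supremum over the compact subsets of $\mss^\comp$ into an infimum, and invoking inner regularity of $\npm$ at $\mss^\comp$ from part~\ref{2_part:mu_is_inner_regular_at_finite_borel_sets_if_all_mu_u_are_regular_borel_measures_1}, one gets $\bigwedge\{\npm(V): V\ \text{open},\ \mss\subseteq V\}\leq\npm(\ts)-\npm(\mss^\comp)=\npm(\mss)$. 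As $\npm(\mss)$ is obviously a lower bound of $\{\npm(V): V\ \text{open},\ \mss\subseteq V\}$, equality holds, which is exactly outer regularity at $\mss$.

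All of this is routine given the earlier results; the one point worth flagging is that complements of compact subsets of $\mss^\comp$ do not exhaust the open supersets of $\mss$ (the complement of a general such superset is closed but may fail to be compact), so one should not expect a literal complementation bijection. This causes no difficulty: the sub-family of such complements already squeezes the infimum over \emph{all} open supersets of $\mss$ down to $\npm(\mss)$ from above, and the reverse inequality is automatic. One should also keep careful track of finiteness throughout, since it is exactly what allows the relevant suprema, infima, and differences to be taken inside $\os$ rather than inside $\osext$; a reader who prefers could equivalently scalarize by pairing with $x^\prime\in\pos{(\ocdualos)}$ and quoting the classical regularity of the finite real measure $\f{\npm(\cdot),x^\prime}$, but the direct argument above is shorter.
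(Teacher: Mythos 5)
Your proposal is correct and follows essentially the same route as the paper: part~(1) is quoted directly from \cref{2_res:inner_regularity_of_quasi-regular_measure_for_normal_spaces}, and outer regularity in part~(2) is obtained by applying inner regularity at $\mss^\comp$ and passing to complements of compact sets, which form a (non-exhaustive, but sufficient) subfamily of the open supersets of $\mss$. The only cosmetic difference is that the paper writes the complementation step as a single chain of equalities and inequalities beginning and ending with $\npm(\mss)$, whereas you argue the two inequalities separately; the content, including the order-reversal of the supremum under $x\mapsto\npm(\ts)-x$ and the careful use of finiteness, is identical.
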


\begin{proof}
Part~\ref{2_part:mu_is_inner_regular_at_finite_borel_sets_if_all_mu_u_are_regular_borel_measures_1} is immediate from \cref{2_res:inner_regularity_of_quasi-regular_measure_for_normal_spaces}.
In part~\ref{2_part:mu_is_inner_regular_at_finite_borel_sets_if_all_mu_u_are_regular_borel_measures_2}, the inner regularity of $\mu$ at all Borel sets is clear from part~\ref{2_part:mu_is_inner_regular_at_finite_borel_sets_if_all_mu_u_are_regular_borel_measures_1}. For the outer regularity, let $\mss\in\borel$. Then the inner regularity of $\npm$ at $\mss^\comp$ yields that
\begin{align*}
\npm(\mss)&=\npm(\ts)-\npm(\mss^\comp)\\
&=\npm(\ts)-\bigvee\{\npm(K): K \text{ is compact and }K\subseteq \mss^\comp\}\\
&=\npm(\ts)-\bigvee\{\npm(\ts)-\npm(K^\comp): K \text{ is compact and }K\subseteq \mss^\comp\}\\
&=\bigwedge\{\npm(K^\comp): K \text{ is compact and }\mss\subseteq K^\comp\}\\
&\geq\bigwedge\{\npm(U): U \text{ is open and }\mss\subseteq U\}\\
&\geq\npm(\mss).
\end{align*}
Hence $\npm$ is outer regular at $\mss$.
\end{proof}

\section{Riesz representation theorem for $\contcts$: normed case}\label{2_sec:riesz_representation_theorems_for_contcts_normed_case}

\noindent In this section, we establish our first analogue of the classical Riesz  rep\-re\-sen\-ta\-tion  theorem for $\contcts$. As in the classical result, the representing measure in it can be infinite. It applies when the codomain of the positive operator is a suitable normed space; every Banach lattice with an order continuous norm will do.  The proof relies on the material on vector-valued outer measures in \cite[Section~5]{de_jeu_jiang:2021a}, to which we refer for details.

We shall employ the following customary notation.

\begin{definition}\label{2_def:precedes_notation}
If $\ts$ is a locally compact Hausdorff space and $S$ is an subset of $\ts$, then we shall write $f\prec S$ to denote that $f\in\contcts$, that $0\leq f(\pt)\leq 1$ for all $\pt\in\ts$, and that $\supp(f)\subseteq S$. We shall write $S\prec f$ to denote that $f\in\contcts$, that $0\leq f(\pt)\leq 1$ for all $\pt\in\ts$, and that $f(\pt)=1$ for all $\pt\in S$.
\end{definition}

\begin{theorem}[Riesz representation theorem for $\contcts$: normed case]\label{2_res:riesz_representation_theorem_for_contcts_normed_case}
Let $\os$ be a \mc\ partially ordered vector space that is also a normed space. Suppose that $\os$ has the following properties:
\begin{enumerate}
	\item\label{2_req:riesz_representation_theorem_for_contcts_normed_case_1}
	if $x_\lambda\downarrow 0$ in $\os$, then $\norm{x_\lambda}\to 0$;
	\item\label{2_req:riesz_representation_theorem_for_contcts_normed_case_2}
	if $\seq{x}\subset\posos$ is such that $\sum_{n=1}^\infty\norm{x_n}<\infty$, then  $\bigvee_{N=1}^\infty\sum_{n=1}^N x_n$, which exists in $\osext$, is actually finite.
	\item\label{2_req:riesz_representation_theorem_for_contcts_normed_case_3}
	if $x\in\os$ is such that $\f{x,x^\prime}\geq 0$ for all norm bounded positive $\sigma$-order continuous functionals on $\os$, then $x\in\posos$.
\end{enumerate} 	
Let $\ts$ be a non-empty locally compact Hausdorff space, and let $\posmap:\contcts\rightarrow\os$ be a positive operator.

Then there exists a unique regular Borel measure $\npm:\borel\to\pososext$ on the Borel $\sigma$-algebra $\borel$ of $\ts$ such that
\[
\posmap(f)=\ointm{f}
\]
for all $f\in\contcts$. If $V$ is a non-empty open subset of $\ts$, then
\begin{equation}\label{2_eq:measure_of_open_subsets_infinite_normed_case}
\npm(V)=\bigvee\{\posmap(f) : f\prec V\}
\end{equation}
in $\osext$. If $K$ is a compact subset of $\ts$, then
\begin{equation}\label{2_eq:measure_of_compact_subsets_infinite_normed_case}
\npm(K)=\bigwedge\{\posmap(f) : K\prec f\}
\end{equation}
in $\osext$. Hence $\npm$ is a finite measure if and only if
$\{\posmap(f) : f\in\pos{\contcts},\,\norm{f}\leq 1\}$ is bounded above in $\os$; this is automatically the case if $\ts$ is compact.

If $\npm$ is a finite measure and if the norm of $\os$ is monotone on $\posos$, then $\posmap$ is continuous with respect to the supremum norm topology on $\contcts$ and the norm topology on $\os$. In this case, $\norm{\posmap}\leq 2\norm{\npm(\ts)}$. If $\os$ is a normed vector lattice, then it is a Banach lattice with an order continuous norm, and $\norm{\posmap}=\norm{\npm(\ts)}$.
\end{theorem}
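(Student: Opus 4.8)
\emph{Proof plan.} The plan is to reprise the classical construction of the representing measure for a positive functional on $\contcts$, now carried out inside the extended space $\osext$ and with the vector-valued outer measures of \cite[Section~5]{de_jeu_jiang:2021a} taking over from the scalar Carath\'eodory machinery. First I would set, for each open $V\subseteq\ts$, $\rho(V)\coloneqq\bigvee\{\posmap(f):f\prec V\}\in\pososext$; the supremum exists because $\{f:f\prec V\}$ is upward directed (take pointwise maxima) and $\os$ is \mc. One checks $\rho(\emptyset)=0$, that $\rho$ is monotone, and --- the one non-trivial point --- that $\rho$ is countably subadditive, which follows from the standard partition-of-unity argument applied to an arbitrary $f\prec\bigcup_n V_n$ (its support being compact, finitely many $V_n$ already cover it). Extending $\rho$ to every subset $A$ of $\ts$ by $\npm^\ast(A)\coloneqq\bigwedge\{\rho(V):A\subseteq V,\ V\ \text{open}\}$ produces a vector-valued outer measure in the sense of \cite[Section~5]{de_jeu_jiang:2021a}; conditions~\ref{2_req:riesz_representation_theorem_for_contcts_normed_case_1}--\ref{2_req:riesz_representation_theorem_for_contcts_normed_case_3} are precisely what make that theory applicable and keep the quantities in play $\osext$-valued with the right finiteness behaviour. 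Its Carath\'eodory theorem then yields a $\sigma$-algebra of $\npm^\ast$-measurable sets on which $\npm^\ast$ restricts to a measure, and the usual partition-of-unity estimates show that every open, hence every Borel, set is $\npm^\ast$-measurable. I would define $\npm$ to be the restriction of $\npm^\ast$ to $\borel$.

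Next I would read off the explicit formulas and regularity. Since $\rho$ is monotone, $\npm^\ast$ agrees with $\rho$ on open sets, which is \cref{2_eq:measure_of_open_subsets_infinite_normed_case}. For compact $K$ the family $\{f:K\prec f\}$ is downward directed, so $\bigwedge\{\posmap(f):K\prec f\}$ exists in $\os$ (hence will be finite, and $\npm$ a Borel measure, once \cref{2_eq:measure_of_compact_subsets_infinite_normed_case} is proved). The inequality ``$\le$'' in \cref{2_eq:measure_of_compact_subsets_infinite_normed_case} follows by considering, for $K\prec f$ and $0<\alpha<1$, the open set $\{\pt:f(\pt)>\alpha\}\supseteq K$: every $g\prec\{f>\alpha\}$ satisfies $\posmap(g)\le\alpha^{-1}\posmap(f)$, so $\npm(K)\le\npm(\{f>\alpha\})\le\alpha^{-1}\posmap(f)$, and one lets $\alpha\uparrow1$ using the Archimedean property recalled in \cref{2_sec:preliminaries}; the reverse inequality uses Urysohn's lemma. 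Inner regularity at an open $V$ then follows from \cref{2_eq:measure_of_open_subsets_infinite_normed_case,2_eq:measure_of_compact_subsets_infinite_normed_case} together with $\posmap(f)\le\npm(\supp f)$ for $f\prec V$, and outer regularity at every Borel set is built into the definition of $\npm^\ast$; hence $\npm$ is a regular Borel measure.

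For the identity $\posmap(f)=\ointm{f}$ I would run the classical range-slicing estimate: dividing the range of $f\in\contcts$ into pieces of length $<\varepsilon$ and invoking \cref{2_eq:measure_of_open_subsets_infinite_normed_case,2_eq:measure_of_compact_subsets_infinite_normed_case} and the outer regularity of $\npm$, one obtains $\posmap(f)\le\ointm{f}+\varepsilon\,c$ for all $\varepsilon>0$, where $c=c(f)\in\posos$ is a fixed finite element assembled from $\npm(\supp f)$ and $\norm{f}_\infty$; here $\ointm{\abs f}\le\norm{f}_\infty\,\npm(\supp f)\in\os$, so $\ointm{f}$ is finite, and $\bigwedge\{\varepsilon\,c:\varepsilon>0\}=0$ by the Archimedean property, whence $\posmap(f)\le\ointm{f}$; applying this to $-f$ gives equality. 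Uniqueness follows in the usual way: any representing regular Borel measure $\npn$ satisfies $\npn(V)=\bigvee\{\posmap(f):f\prec V\}$ for open $V$ --- ``$\le$'' because $f\prec V$ gives $f\le\indicator{V}$ and hence $\posmap(f)\le\npn(V)$, ``$\ge$'' from inner regularity of $\npn$ at $V$ together with Urysohn's lemma --- so $\npn=\npm$ on open sets, and outer regularity at all Borel sets promotes this to $\npn=\npm$ on $\borel$.

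Finally, for the closing assertions: $\npm(\ts)=\rho(\ts)=\bigvee\{\posmap(f):f\in\pos{\contcts},\ \norm{f}_\infty\le1\}$ since $f\prec\ts$ just means $f\in\pos{\contcts}$ with $0\le f\le1$, so by \mc ness $\npm$ is finite exactly when that set is bounded above in $\os$, which holds (with bound $\posmap(\onefunction)$) when $\ts$ is compact. If $\npm$ is finite and the norm is monotone on $\posos$, then $0\le\posmap(f^\pm)=\ointm{f^\pm}\le\norm{f}_\infty\,\npm(\ts)$ gives $\norm{\posmap(f)}\le\norm{\posmap(f^+)}+\norm{\posmap(f^-)}\le2\norm{f}_\infty\norm{\npm(\ts)}$, i.e.\ $\norm{\posmap}\le2\norm{\npm(\ts)}$. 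If $\os$ is moreover a normed vector lattice, conditions~\ref{2_req:riesz_representation_theorem_for_contcts_normed_case_1} and~\ref{2_req:riesz_representation_theorem_for_contcts_normed_case_2} make it a Banach lattice with an order continuous norm (absolutely convergent series converge in norm: \ref{2_req:riesz_representation_theorem_for_contcts_normed_case_2} supplies the order supremum of the absolute partial sums and~\ref{2_req:riesz_representation_theorem_for_contcts_normed_case_1} turns the resulting order convergence into norm convergence); then the vector-lattice estimate $\abs{\posmap(f)}\le\posmap(\abs f)$ sharpens the bound to $\norm{\posmap(f)}\le\norm{f}_\infty\norm{\npm(\ts)}$, while a net $f_\lambda\prec\ts$ with $\posmap(f_\lambda)\uparrow\npm(\ts)$ and~\ref{2_req:riesz_representation_theorem_for_contcts_normed_case_1} applied to $\npm(\ts)-\posmap(f_\lambda)\downarrow0$ give $\norm{\posmap}\ge\norm{\npm(\ts)}$, so $\norm{\posmap}=\norm{\npm(\ts)}$. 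I expect the real work to be in the first paragraph: proving countable subadditivity of $\rho$ inside $\osext$ and checking that the outer-measure theory of \cite[Section~5]{de_jeu_jiang:2021a} applies under the present hypotheses --- in particular tracking which quantities are finite, which is where condition~\ref{2_req:riesz_representation_theorem_for_contcts_normed_case_2} does its work.
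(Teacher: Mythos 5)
Your overall route is the same as the paper's: the same two-stage definition of $\pom$ (supremum over $f\prec V$ on open sets, infimum over open supersets on arbitrary sets), the same partition-of-unity proof of sub-additivity on open sets, the same standard chain of claims leading to measurability of Borel sets and to inner/outer regularity, the same $\alpha\uparrow 1$ plus Archimedean argument for \cref{2_eq:measure_of_compact_subsets_infinite_normed_case}, the same range-slicing proof of $\posmap(f)=\ointm{f}$, and the same uniqueness and norm arguments. All of those parts are correct (modulo a harmless $\leq$/$\geq$ label swap in your uniqueness paragraph, and the remark that for $f\geq 0$ the reverse inequality in the representation identity comes from the two-sided sandwich rather than from ``applying the estimate to $-f$'').

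The one genuine gap is the step you yourself flag as ``the real work'': the $\sigma$-sub-additivity of $\pom$ at \emph{arbitrary} subsets of $\ts$. You assert that hypotheses \ref{2_req:riesz_representation_theorem_for_contcts_normed_case_1}--\ref{2_req:riesz_representation_theorem_for_contcts_normed_case_3} ``make the outer-measure theory applicable'' but never say how, and this is exactly where the scalar proof breaks: for $\os=\RR$ one picks open $V_n\supseteq\mss_n$ with $\npm(V_n)\leq\pom(\mss_n)+\varepsilon 2^{-n}$ and sums, but in a general $\os$ there is no way to ``add $\varepsilon 2^{-n}$'' to a vector. The substitute your plan needs is the following. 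Assuming $x\coloneqq\bigvee_{N}\sum_{n=1}^{N}\pom(\mss_n)$ is finite, each $\pom(\mss_n)$ is the infimum of the downward directed family of the $\npm(V)$ with $V\supseteq\mss_n$ open and $\npm(V)\in\os$, so $\npm(V)-\pom(\mss_n)\downarrow 0$; hypothesis \ref{2_req:riesz_representation_theorem_for_contcts_normed_case_1} upgrades this order convergence to norm convergence, which lets one choose $V_n\supseteq\mss_n$ with $\norm{\npm(V_n)-\pom(\mss_n)}<\varepsilon 2^{-n}$. Hypothesis \ref{2_req:riesz_representation_theorem_for_contcts_normed_case_2} then makes $y\coloneqq\bigvee_{N}\sum_{n=1}^{N}\bigl(\npm(V_n)-\pom(\mss_n)\bigr)$ a \emph{finite} element of $\os$, whence $\sum_{n=1}^{N}\npm(V_n)\leq x+y$ for all $N$ and, by sub-additivity on open sets, $\pom\bigl(\bigcup_n\mss_n\bigr)\leq\npm\bigl(\bigcup_n V_n\bigr)\leq x+y$. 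Finally one pairs with a norm bounded positive $\sigma$-order continuous functional $x^\prime$ (its $\sigma$-order continuity is needed to pass from the partial sums to the supremum $y$), gets $\f{y,x^\prime}\leq\varepsilon\norm{x^\prime}$, lets $\varepsilon\downarrow 0$, and invokes hypothesis \ref{2_req:riesz_representation_theorem_for_contcts_normed_case_3} to conclude $\pom\bigl(\bigcup_n\mss_n\bigr)\leq x$. Without this argument the Carath\'eodory machinery of \cite[Section~5]{de_jeu_jiang:2021a} cannot be invoked, so it must be spelled out rather than cited as a consequence of the hypotheses.
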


Let us mention explicitly that under~\ref{2_req:riesz_representation_theorem_for_contcts_normed_case_1} it is not required that $\norm{x_\lambda}\downarrow 0$.

In the context of a normed vector lattice $\os$, the requirement under~\ref{2_req:riesz_representation_theorem_for_contcts_normed_case_2}
is the so-called Riesz-Fischer property. It characterises the Banach lattices among the normed vector lattices; see \cite[Theorem~16.2]{zaanen_INTRODUCTION_TO_OPERATOR_THEORY_IN_RIESZ_SPACES:1997}, for example.

Before we embark on the lengthy proof, let us note that not only the real numbers satisfy all requirements in the theorem, but that this is, in fact, the case for every Banach lattice with an order continuous norm. Indeed, the first requirement is then satisfied by definition, and the second one is a consequence of the fact that the sum of a convergent series with positive terms in a normed vector lattice is also the supremum of the sequence of its partial sums; see, e.g., \cite[Theorem~15.3]{zaanen_INTRODUCTION_TO_OPERATOR_THEORY_IN_RIESZ_SPACES:1997}. The well-known validity of the third re\-quire\-ment and the \Dc ness were already observed in \cite[Proposition~3.10]{de_jeu_jiang:2021a}.

The global layout of the following proof of \cref{2_res:riesz_representation_theorem_for_contcts_normed_case} is modelled after that of \cite[Theorem~38.3]{aliprantis_burkinshaw_PRINCIPLES_OF_REAL_ANALYSIS_THIRD_EDITION:1998} for the real case;
the part where it is established that the measure as constructed actually represents the positive operator is essentially that of \cite[Proposition~7.2.11]{cohn_MEASURE_THEORY_BIRKHAUSER_REPRINT:1993} for the real case. Although our statement is a little stronger, the proof of \cref{2_eq:measure_of_compact_subsets_infinite_normed_case} is essentially that of the corresponding statement in the real case in \cite[Theorem~7.2]{folland_REAL_ANALYSIS_SECOND_EDITION:1999}.

The arguments in the proof rather resemble those for the real case, but there are certainly modifications. For example, the proof that the set map $\pom$ in the proof below is an outer measure requires a special argument in the vector-valued case. Furthermore, we shall use the fact that the space is Archimedean a few times. In the real case it is not always so obvious in the various proofs in the literature that this is a property of the real numbers that is crucial for the Riesz representation theorem. For example, the part of the proof of \cite[Theorem~7.2]{folland_REAL_ANALYSIS_SECOND_EDITION:1999} where it is shown that the measure represents the integral, concludes with an argument invoking the lattice structure of $\RR$. That does not apply in the more general context. To circumvent this the Archimedean property is used.

\begin{proof}[Proof of \cref{2_res:riesz_representation_theorem_for_contcts_normed_case}]
We define a set map $\pom:2^{\ts}\rightarrow\osext$ as follows. Set $\npm(\emptyset)\coloneqq0$, and set
\begin{equation}\label{2_eq:define_the_measure_of_open sets}
\npm(V)\coloneqq\bigvee\{\posmap(f) : f\prec V\}
\end{equation}
in $\osext$ for all non-empty open subsets of $\ts$. Note that the set in the right hand side of this equation is upward directed as a consequence of the positivity of $\posmap$, so that its supremum exists in $\osext$. Indeed, if $f_1,f_2\prec V$, then $f_1\vee f_2\prec V$, and $\posmap(f_1\vee f_2)\geq \posmap(f_1) $ and $\posmap(f_1\vee f_2)\geq\posmap(f_2)$.

Evidently, $\npm(V)\geq 0$ for all open subsets $V$ of $\ts$, and $\npm(V_1)\leq\npm(V_2)$ if $V_1$ and $V_2$ are open subsets of $\ts$ such that $V_1\subseteq V_2$.

For an arbitrary subset $\mss$ of $\ts$, we set
\begin{equation}\label{2_eq:define_an_outer_measure_on_X}
\pom(\mss)\coloneqq\bigwedge\{\npm(V) : V \text{ is open and }\mss\subseteq V\}.
\end{equation}
in $\osext$. Note that the set in the right hand side of this equation is downward directed, so that the infimum indeed exists in $\osext$. Indeed, if $V_1,V_2$ are open subsets of $\ts$ such that $\mss\subseteq V_1,V_2$, then $\mss\subseteq V_1\cap V_2$, and $\pom(V_1\cap V_2)\leq\pom(V_1)$ and $\pom(V_1\cap V_2)\leq\pom(V_2)$ as a consequence of the monotonicity of $\npm$ on the open subsets of $\ts$ that we have already observed. This monotonicity on the open subsets of $\ts$ also shows that $\npm(V)=\pom(V)$ for all open subsets $V$ of $\ts$. When $V$ is an open subset of $\ts$, then we shall write $\npm(V)$ for this common element of $\pososext$.

Evidently, $\pom(\mss)\geq 0$ for all subsets $\mss$ of $\ts$, and $\pom(\mss_1)\leq\pom(\mss_2)$ if $\mss_1$ and $\mss_2$ are subsets of $\ts$ such that $\mss_1\subseteq \mss_2$.

\emph{The first main step in the proof consists of showing that $\pom$ is an $\pososext$-valued outer measure in the sense of \textup{\cite[Definition~5.1]{de_jeu_jiang:2021a}}.}

We shall now proceed to show this.

Since $\pom(\emptyset)=0$ and since the monotonicity of $\pom$ has already been established, we are left with the $\sigma$-sub-additivity.

We first establish the $\sigma$-sub-additivity of $\pom$ for open subsets of $\ts$, on which it coincides with $\npm$. Let $\seq{V}$ be a sequence of open subsets of $\ts$. Then  $\bigcup_{n=1}^\infty V_n$ is open, and  we are to show that $\npm\left(\bigcup_{n=1}^\infty V_n\right)\leq\bigvee_{N=1}^\infty \sum_{n=1}^N \npm(V_n)$ in $\osext$. In doing so, we may and shall suppose that all $V_n$ are non-empty.

Suppose that $f\prec \bigcup_{n=1}^\infty V_n$. Since $\supp(f)$ is compact, there exists $m\geq 1$ such that $\supp(f)\subseteq\bigcup_{n=1}^m V_n$. Then \cite[Theorem~2.13]{rudin_REAL_AND_COMPLEX_ANALYSIS_THIRD_EDITION:1987} furnishes $f_1,\dotsc,f_m$ in $\contcts$ such that $f_n\prec V_n$ for $n=1,\dotsc,m$ and $\sum_{n=1}^m f_n(\pt)=1$ for all $\pt\in\supp(f)$. Then clearly $f\leq \sum_{n=1}^m f_n$, so
\[
\posmap(f)\leq\sum_{n=1}^m\posmap(f_n)\leq \sum_{n=1}^m \npm(V_n)\leq\bigvee_{N=1}^\infty\sum_{n=1}^N \npm(V_n).
\]
in $\osext$.
This is true for all $f\prec \bigcup_{n=1}^\infty V_n$, so we have
\[
\npm\left(\bigcup_{n=1}^\infty V_n\right)\leq\bigvee_{N=1}^\infty\sum_{n=1}^N \npm(V_n)
\]
in $\osext$. Hence $\pom$ is $\sigma$-sub-additive on the open subsets of $\ts$.

We shall now establish the $\sigma$-sub-additivity of $\pom$ on arbitrary subsets of $\ts$.

Let $\seq{\mss}$ be sequence of subsets of $\ts$. We show that $\pom(\bigcup_{n=1}^\infty \mss_n)\leq\bigvee_{N=1}^\infty \sum_{n=1}^N \pom(\mss_n)$ in $\osext$. In doing so, we may and shall suppose that
\[
x\coloneqq\bigvee_{N=1}^\infty \sum_{n=1}^N \pom(\mss_n)
\]
is actually an element of $\os$. Then $\pom(\mss_n)$ is also finite for $n\geq 1$, so that, by part~(3) of \cite[Lemma~2.3]{de_jeu_jiang:2021a},
\[
 \pom(\mss_n)=\bigwedge\{\npm(V): V \text{ is open, } \mss_n\subseteq V\text{, and }\npm(V)\in\os\}.
\]
That is, $(\npm(V)-\pom(\mss_n))\downarrow 0$ as $V$ runs over the open neighbourhoods of $\mss_n$. By the assumption under~\ref{2_req:riesz_representation_theorem_for_contcts_normed_case_1} in the theorem, we also have $\norm{\npm(V)-\pom(\mss_n)}\to 0$.

After these preparations, we fix a norm bounded positive $\sigma$-order continuous functional $x^\prime$ on $\os$ and $\varepsilon>0$.

For each $n\geq 1$, we can then find an open neigbourhood $V_n$ of $\mss_n$ such that $\npm(V_n)$ is finite and
$\norm{\npm(V_n)-\pom(\mss_n)}<\varepsilon/2^n$. As a consequence of the assumption under~\ref{2_req:riesz_representation_theorem_for_contcts_normed_case_2} in the theorem, we can set
\[
y\coloneqq \bigvee_{N=1}^\infty\sum_{n=1}^N\left(\npm(V_n)-\pom(\mss_n)\right)
\]
as an element of $\os$.

Let $N\geq 1$. Then
\begin{align*}
\left[\sum_{n=1}^N \npm(V_n)\right]-x
&\leq\sum_{n=1}^N\npm(V_n)-\sum_{n=1}^N\pom(\mss_n)\\
&=\sum_{n=1}^N\left(\npm(V_n)-\pom(\mss_n)\right)\\&\leq y,
\end{align*}
so that
\[
\sum_{n=1}^N \npm(V_n)\leq x+y.
\]
We conclude that $\bigvee_{N=1}^\infty\sum_{n=1}^N \npm(V_n)$ is finite, and that
\[
\bigvee_{N=1}^\infty\sum_{n=1}^N \npm(V_n)\leq x+y.
\]
Since $\bigcup_{n=1}^\infty \mss_n\subseteq\bigcup_{n=1}^\infty V_n$, we then see, using the monotonicity of $\pom$ and the $\sigma$-sub-additivity of $\npm$ on the open subsets of $\ts$, that
\[
\pom\left(\bigcup_{n=1}^\infty \mss_n\right)\leq\pom\left(\bigcup_{n=1}^\infty V_n\right)=\npm\left(\bigcup_{n=1}^\infty V_n\right)\leq\bigvee_{N=1}^\infty\sum_{n=1}^N \npm(V_n)\leq x+y.
\]
In particular, $\pom\left(\bigcup_{n=1}^\infty \mss_n\right)$ is finite.
Hence we see, using the $\sigma$-order continuity of $x^\prime$ in the second step, that
\begin{align*}
\lrf{\pom\left(\bigcup_{n=1}^\infty \mss_n\right), x^\prime}&\leq\f{x+y,x^\prime}\\
&=\f{x,x^\prime}+\sum_{n=1}^\infty\f{\npm(V_n)-\pom(\mss_n),x^\prime}\\
&\leq\f{x,x^\prime}+\sum_{n=1}^\infty\norm{\npm(V_n)-\pom(\mss_n)}\norm{x^\prime}\\
&\leq\f{x,x^\prime}+\varepsilon \norm{x^\prime}.
\end{align*}
Letting $\varepsilon\downarrow 0$, we conclude that
\[
\lrf{\pom\left(\bigcup_{n=1}^\infty \mss_n\right), x^\prime}\leq\f{x,x^\prime}.
\]
Since this is true for all norm bounded positive $\sigma$-order continuous functionals $x^\prime$, the assumption under~\ref{2_req:riesz_representation_theorem_for_contcts_normed_case_3} in the theorem implies that
\[
\pom\left(\bigcup_{n=1}^\infty \mss_n\right)\leq x=\bigvee_{N=1}^\infty\sum_{n=1}^N\npm(\mss_n).
\]
We have now shown that $\pom$ is an outer measure in the sense of \cite[Definition~5.1]{de_jeu_jiang:2021a}.

We recall from \cite[Definition~5.2]{de_jeu_jiang:2021a} that
a subset $\mss$ of $\pset$ is called $\pom$-measurable if $\pom(\msstwo)=\pom(\msstwo\cap\mss)+\pom(\msstwo\cap\mss^\comp)$ in $\osext$ for all $\msstwo\subseteq \pset$. According to \cite[Theorem~5.5]{de_jeu_jiang:2021a}, the $\pom$-measurable subsets of $\ts$ form a $\sigma$-algebra, and the restriction of $\pom$ to this $\sigma$-algebra is an $\pososext$-valued measure.

\emph{The next step is to show that every Borel set is $\pom$-measurable. The fact that the restriction of $\pom$ to the Borel $\sigma$-algebra is even a \emph{regular} Borel measure will simultaneously be established during the process.}

This step is itself broken up into a number of intermediate results.

\emph{We claim that $\pom(K)$ is finite for every compact subset $K$ of $\ts$.}

To see this, suppose that $K$ is a compact subset $\ts$. Using \cite[Theorem~2.7]{rudin_PRINCIPLES_OF_MATHEMATICAL_ANALYSIS_THIRD_EDITION:1976}, we can choose a relatively open subset $V$ of $\ts$ such that $K\subseteq V$. By Urysohn's Lemma (see \cite[Theorem~2.12]{rudin_PRINCIPLES_OF_MATHEMATICAL_ANALYSIS_THIRD_EDITION:1976}), there exists $g\in\contcts$ such that $\overline{V}\prec g$. From this, we see that
\[
\pom(K)\leq\pom(V)=\npm(V)=\bigvee\{\posmap(f) : f\prec V\}\leq\posmap(g),
\]
so that $\pom(K)$ is finite.

\emph{We claim that $\pom$ is finitely additive on the compact subsets of $\ts$.}	

To see this, it is sufficient to show that $\pom(K_1\cup K_2)\geq\pom(K_1)+\pom(K_2)$ for any two disjoint compact subsets $K_1$ and $K_2$ of $\ts$. Since $\ts$ is a Hausdorff space, there exist open subsets $U_1$ and $U_2$ of $\ts$ such that $K_1\subseteq U_1$, $K_2\subseteq U_2$, and $U_1\cap U_2=\emptyset$. Let $V$ be an arbitrary open neighbourhood of $K_1\cup K_2$. Then $K_1\subseteq V\cap U_1$, $K_2\subseteq V\cap  U_2$, and $(V\cap U_1)\cap(V\cap U_2)=\emptyset$. The latter disjointness implies that $f_1+f_2\prec V$ whenever $f_1\prec V\cap U_1$ and $f\prec V\cap U_2$. Using this, we see that
\begin{align*}
\pom(K_1)+\pom(K_2)&
\leq \npm(V\cap V_1)+\npm(V\cap V_2)\\
&=\sum_{i=1}^2\bigvee\{\posmap(f_i) :  f_i\prec V\cap V_i\}\\
&=\bigvee\{\posmap(f_1)+\posmap(f_2) : f_1\prec V\cap U_1 \text{ and }f_2\prec V\cap U_2\}\\
&=\bigvee\{\posmap(f_1+f_2) : f_1\prec V\cap U_1 \text{ and }f_2\prec V\cap U_2\}\\
&\leq\bigvee\{\posmap(f) : f\prec V\}\\
&=\npm(V).
\end{align*}
Recalling the defining \cref{2_eq:define_an_outer_measure_on_X}, we see that $\pom(K_1)+\npm(K_2)\leq \pom(K_1\cup K_2)$, as required.

\emph{We claim that $\pom(\supp(f))\geq\posmap(f)$ whenever $f\prec\ts$}.

Indeed, if $V$ is any open neighbourhood of $\supp(f)$, then $f\prec V$, so that $\posmap(f)\leq\npm(V)$. Recalling the defining \cref{2_eq:define_an_outer_measure_on_X}, we see that $\pom(\supp(f))\geq \posmap(f)$, as required.

\emph{We claim that
\begin{equation}\label{2_eq:inner_regularity_of_outer_measure_at_open_subsets}
\npm(V)=\bigvee\{\pom(K) : K \text{ is compact and } K\subseteq V\}
\end{equation}
in $\osext$ for every open subset $V$ of $\ts$}.

To see this, we may suppose that $V\neq\emptyset$. In that case, using the previous result and the monotonicity of $\pom$, we have
\begin{align*}
\npm(V)
&=\bigvee\{\posmap(f) :  f\prec V\}\\
&\leq \bigvee\{\pom(\supp{f}): f\prec V\}\\
&\leq\bigvee\{\pom(K): K \text{ is compact and } K\subseteq V\}\\
&\leq\pom(V)\\
&=\npm(V).
\end{align*}
This establishes our claim.

\emph{We claim that $\pom(K\cup V)=\pom(K)+\npm(V)$ whenever $K$ is a compact subset of $\ts$, $V$ is an open subset of $\ts$, and $K\cap V=\emptyset$.}

To see this, we use \cref{2_eq:inner_regularity_of_outer_measure_at_open_subsets} and the finite additivity of $\pom$ on the compact subsets of $\ts$ to justify that
\begin{align*}
\pom(K\cup V)&\leq \pom(K)+\npm(V)\\
&=\pom(K)+\bigvee\{\pom(K_V) : K_V \text{ is compact and } K_V\subseteq V\}\\
	&=\bigvee\{\pom(K)+\pom(K_V) : K_V \text{ is compact and }K_V\subseteq V\}\\
		&=\bigvee\{\pom(K\cup K_V) : K_V \text{  is compact and }K_V\subseteq V\}\\
	&\leq\pom(K\cup V).
\end{align*}
This establishes our claim.

\emph{We claim that every Borel set is $\pom$-measurable.}

To see this, it is sufficient to show that every open subset of $\ts$ is $\pom$-measurable. Let $V$ be an open subset of $\ts$. In order to show that $V$ is $\pom$-measurable, it is sufficient to show that $\pom(\msstwo)\geq\pom(\msstwo\cap V)+\pom(\msstwo\cap V^\comp)$ whenever $\msstwo$ is a subset of $\ts$ such that $\pom(\msstwo)$ is finite.

We first show this when $\msstwo$ is an open subset of $\ts$. In that case, let $K$ be a compact subset of $\msstwo\cap V$. Then $\msstwo=K\cup \left(\msstwo\setminus K\right)$ as a disjoint union. Since $\msstwo\setminus K$ is an open subset of $\ts$, our previous result shows that
\[
\npm(\msstwo)=\pom(K)+\npm(\msstwo\setminus K).
\]
Since $\msstwo\cap V^\comp\subseteq \msstwo\cap K^\comp$, this implies that
\[
\pom(K)+\pom(\msstwo\cap V^\comp)\leq\pom(K)+\npm(\msstwo\setminus K)=\npm(\msstwo).
\]
Using \cref{2_eq:inner_regularity_of_outer_measure_at_open_subsets} for the open subset $\msstwo\cap V$ of $\ts$, we see that
\[
\npm(\msstwo\cap V)+\pom(\msstwo\cap V^\comp)\leq\npm(\msstwo)
\]
for all open subsets $\msstwo$ of $\ts$, as desired. 

Suppose now that $\msstwo$ is an arbitrary subset of $\ts$ such that $\pom(\msstwo)$ is finite. If $U$ is an open neighbourhood of $\msstwo$ such that $\npm(U)$ is finite, then, using what we have just established in the first step, we see that
\[
\npm(U)\geq\npm(U\cap V )+\pom(U\cap V^\comp)\geq\pom(\msstwo\cap V)+\pom(\msstwo\cap V^\comp).
\]
Since this is evidently also true if $\npm(U)=\largest$, the defining \cref{2_eq:define_an_outer_measure_on_X} shows that
\[
\pom(\msstwo)\geq\pom(\msstwo\cap V)+\pom(\msstwo\cap V^\comp).
\]
This concludes the proof that all Borel sets are $\pom$-measurable.

\emph{In the remainder of the proof, we shall write $\npm$ for the restriction of $\pom$ to the Borel $\sigma$-algebra of $\ts$.}

We have already seen that $\npm$ is finite on the compact subsets of $\ts$. Furthermore, inner regularity at all open subsets of $\ts$ was established in \cref{2_eq:inner_regularity_of_outer_measure_at_open_subsets}, and outer regularity at all Borel sets was built in by the defining \cref{2_eq:define_an_outer_measure_on_X}.

\emph{Hence $\npm$ is a regular Borel measure on $\ts$.}

\emph{\Cref{2_eq:measure_of_open_subsets_infinite_normed_case} is simply the defining \cref{2_eq:define_the_measure_of_open sets}.}

\emph{We shall now establish \cref{2_eq:measure_of_compact_subsets_infinite_normed_case}.}

For this, we need a preparatory result that we shall also use in the sequel of the proof.

\emph{We claim that $\npm(K)\leq \posmap(f)$ whenever $K$ is a compact subset of $\ts$ and $f\in\contcts$ is such that $\indicator{K}\leq f$.}

To see this, fix $\varepsilon$ such that $0<\varepsilon<1$, and set $U_\varepsilon=\{\pt\in\pset : f(\pt)>1-\varepsilon\}$. Then $U_\varepsilon$ is open and $K\subseteq U_\varepsilon$. If $g\prec U_\varepsilon$, then $g\leq (1-\varepsilon)^{-1}f$, so that
\[
\npm(K)\leq \npm(U_\varepsilon)=\bigvee\{\posmap (g) : g\prec U_\varepsilon\}\leq (1-\varepsilon)^{-1}\posmap(f).
\]
Hence $\npm(K)\leq \posmap(f)+\varepsilon\npm(K)$ for all $\epsilon>0$. Since $\npm(K)$ is finite and $\os$ is Archimedean, this implies that $\npm(K)\leq \posmap(f)$.

\emph{Continuing with the proof of \cref{2_eq:measure_of_compact_subsets_infinite_normed_case}}, we see from what we have just demonstrated that $\npm(K)\leq\bigwedge\{\posmap(f) : K\prec f\}$. In order to establish the reverse inequality, let $V$ be an open neighbourhood of $K$. By Urysohn's Lemma (see \cite[Theorem~2.7]{rudin_REAL_AND_COMPLEX_ANALYSIS_THIRD_EDITION:1987}), there exists a function $f$ such that $K\prec f\prec V$. Then $\npm(V)\geq \posmap(f)$. \Cref{2_eq:define_an_outer_measure_on_X} then implies that
\[
\npm(K)=\bigwedge\{\npm(V) : V\text{  is open and }K\subseteq V\}\geq \bigwedge\{\posmap(f) : K\prec f\}.
\]

This concludes the proof of \cref{2_eq:measure_of_compact_subsets_infinite_normed_case}.

\emph{We shall now show that $\posmap(f)=\ointm{f}$ for all $f\in\contcts$.}

For this, we need a final preparatory result.

\emph{We claim that $\npm(K)\geq \posmap(f)$ whenever $K$ is a compact subset of $\ts$ and $f\in\contcts$ is such that $0\leq f\leq \indicator{K}$.}

To see this, let $V$ be an open neighbourhood of $K$. Then $f\prec V$, so that $\npm(V)\geq\posmap(f)$ by the defining \cref{2_eq:define_the_measure_of_open sets}, and then $\npm(K)\geq\posmap(f)$ by the defining  \cref{2_eq:define_an_outer_measure_on_X}.

\emph{Continuing with the proof that $\posmap(f)=\ointm{f}$ for all $f\in\contcts$}, we note that we may clearly suppose that $f\in\pos{\contcts}$. Fix $\varepsilon>0$. For all $n\geq 1$, define $f_n:\ts\to\RR$ by setting
\[
f_n(\pt)\coloneqq
\begin{cases}
0&\text{if }f(\pt)\leq (n-1)\varepsilon;\\
f(\pt)-(n-1)\varepsilon&\text{if }(n-1)\varepsilon<f(\pt)\leq n\varepsilon;\\
\varepsilon&\text{if }n\varepsilon<f(\pt).
\end{cases}
\]
A moment's thought shows that $f_n\in\pos{\contcts}$ and $f_n(\ts)\subseteq[0,\varepsilon]$ for all $n\geq 1$, and that $\sum_{n=1}^\infty f_n(\pt)=f(\pt)$, where the series is a finite sum for every $\pt\in\ts$. There exists $N\geq 1$ such that $f_n=0$ for all $n>N$, so that actually $f=\sum_{n=1}^N f_n$ is a finite sum. We set $K_0\coloneqq\supp(f)$ and $K_n\coloneqq\{\pt\in\ts : f(\pt)\geq n\varepsilon\}$ for $n\geq 1$.
Then $\varepsilon\indicator{K_n}\leq f_n\leq\varepsilon \indicator{K_{n-1}}$ for all $n\geq 1$. The first of these inequalities, written as $\indicator{K_n}\leq f_n/\varepsilon$, implies that $\npm(K_n)\leq \posmap(f_n)/\varepsilon$ for $n\geq 1$ by one of the claims established above. The second of these inequalities, written as $f_n/\varepsilon\leq\indicator{K_{n-1}}$, implies that $\npm(K_{n-1})\geq\posmap(f_n)/\varepsilon$ for all $n\geq 1$ by our most recent auxiliary result. Thus we have $\varepsilon\npm(K_n)\leq \posmap(f_n)\leq\varepsilon \npm(K_{n-1})$ for $n\geq 1$. It is evident from the monotonicity of the order integral that $\varepsilon\npm(K_n)\leq \ointm{f_n}\leq\varepsilon \npm(K_{n-1})$ for $n\geq 1$. Since $f=\sum_{n=1}^N f_n$, a summation of the inequalities shows that
\[
\varepsilon\sum_{i=1}^N\npm(K_n)\leq\posmap(f)\leq \varepsilon\sum_{i=1}^N\npm(K_{n-1})
\]
and that
\[
\varepsilon \sum_{i=1}^N\npm(K_n)\leq\ointm{f}\leq\varepsilon \sum_{i=1}^N\npm(K_{n-1}).
\]
This implies that $\posmap(f)-\ointm{f}\leq \varepsilon(\npm(K_0)-\npm(K_N))\leq\varepsilon\npm(K_0)=\varepsilon\npm(\supp(f))$.
Since $\varepsilon>0$ is arbitrary and $\os$ is Archimedean, we see that $\posmap(f)-\ointm{f}\leq 0$. The reverse inequality is likewise true by a similar argument, and we conclude that $\posmap(f)=\ointm{f}$ for all $f\in\contcts$, as required.

\emph{We turn to the uniqueness of $\npm$}.

Suppose that $\npm^\prime$ is another regular Borel measure on $\ts$ such that $\posmap(f)=\orderintegral{\ts}{f}{{\npm}^\prime}$ for all $f\in\contcts$. By regularity, it is sufficient to show that $\npm(K)=\npm^\prime(K)$ for all compact subsets $K$ of $\ts$.

Let $V$ be a open neighbourhood of $K$. The Urysohn Lemma furnishes  $f\in\contcts$ with $K\prec f\prec V$  (see \cite[Theorem~2.13]{rudin_REAL_AND_COMPLEX_ANALYSIS_THIRD_EDITION:1987}), and then
\[
\npm^\prime(K)=\orderintegral{\ts}{\indicator{K}}{{\npm}^\prime}\leq \orderintegral{\ts}{f}{{\npm}^\prime}=\posmap(f)=\ointm{f}
\leq\ointm{\indicator{V}}=\npm(V).
\]
Using the outer regularity of $\npm$ at $K$, we see that $\npm^\prime(K)\leq\npm(K)$. The reverse inequality likewise holds, and we conclude that $\npm$ is the unique regular Borel measure on $\ts$ that represents $\posmap$.

\emph{Clearly, $\npm$ is a finite measure if and only if $\npm(\ts)$ is finite. If the space $\ts$ is compact, then $\{\posmap(f) : f\in\pos{\contcts},\,\norm{f}\leq 1\}$ is obviously bounded above in $\os$, since it has $\posmap(\onefunction)$  as an upper bound.}

\emph{We prove the statements in the final paragraph of the theorem.}

Suppose that $\npm(\ts)$ is finite and that the norm of $\os$ is monotone on its positive cone. Take $f\in\contcts$ such that $\norm{f}\leq 1$. Then $\norm{\posmap (f)}\leq\norm{\posmap(\pos{f})}+\norm{\posmap(\negt{f})}\leq 2\norm{\posmap(\abs{f})}\leq 2 \norm{\npm(\ts)}$. Hence $\posmap$ is continuous with norm at most $2\norm{\npm(\ts)}$. When $\os$ is a vector lattice, then, as already observed preceding the proof, it is a Banach lattice as a consequence of the assumption under~\ref{2_req:riesz_representation_theorem_for_contcts_normed_case_2} in the theorem; its norm is order continuous by assumption~\ref{2_req:riesz_representation_theorem_for_contcts_normed_case_1}. Take $f\in\contcts$ such that $\norm{f}\leq 1$. Since $\posmap$ is positive, we have $\norm{\posmap(f)}=\norm{\abs{\posmap(f)}}\leq\norm{\posmap{\abs{f}}}\leq\norm{\npm(\ts)}$. Hence $\norm{\posmap}\leq\norm{\npm(\ts)}$. The order continuity of the norm on $\os$ implies that equality holds.
\end{proof}

Also for other $\os$ than the real numbers, it is easy to give examples of positive operators $\posmap:\contcts\to\os$ where \cref{2_res:riesz_representation_theorem_for_contcts_normed_case} applies and where the representing measure $\npm$ is infinite. Indeed, let $\ts$ be a locally compact Hausdorff space and let $\os$ be a Banach lattice with an order continuous norm. Suppose that $(\rho,e)$ is a pair where $\rho$ is a regular Borel measure $\rho:\borel\to\posRext$, and where $e\in\posos\setminus\{0\}$. Set $\posmap_{\rho,e}(f)\coloneqq\int_\ts f\di{\rho}\cdot e$ for $f\in\contcts$. Then $\posmap_{\rho,e}:\contcts\to\os$ is positive, and its representing measure $\npm_{\rho,e}$ is given by $\npm_{\rho,e}(\mss)=\rho(\mss)\cdot e$ for $\mss\in\borel$. Hence $\npm_{\rho,e}$ is infinite precisely when $\rho$ is infinite. Although this yields examples where the representing measures are infinite, these are hardly interesting. However, one can use such operators as building blocks for others. Indeed, if $(\rho_1,e_1),\dotsc,(\rho_n,e_n)$ are $n$ such pairs and $\rho_1$ is infinite, then $\bigvee_{i=1}^n \posmap_{\rho_i,e_i}\in\pos{\regularop{\contcts,\os}}$, and its representing measure is again infinite. The latter is immediate from the fact that $\bigvee_{i=1}^n \posmap_{\rho_i,e_i}\geq \posmap_{\rho_1,e_1}$ and \cref{2_eq:measure_of_open_subsets_infinite_normed_case}.
Subsequently, one can take positive linear combinations of such finite suprema to obtain a non-zero cone of positive operators from $\contcts$ into $\os$ such that every non-zero element of this cone has an infinite representing measure.

\section{Riesz representation theorems for $\contcts$: normal case}\label{2_sec:riesz_representation_theorems_for_contcts_normal_case}

\noindent In this section, we establish two representation theorems for positive operators  $\posmap:\contcts\to\os$ in the case where $\os$ is a \mc\ and normal partially ordered vector space. As can be seen from \cref{2_ex:combination_result_for_normality_and_monotone_completeness}, such spaces are quite common and diverse. Some of the given examples are algebras, but that plays no role for the representation theorems in this section. The positive operator $\posmap$ need merely be linear.

The idea is to use the classical Riesz representation theorem (a special case of \cref{2_res:riesz_representation_theorem_for_contcts_normed_case}) as a stepping stone by invoking the positive order continuous functionals on $\os$ and exploiting the normality of $\os$.

Of course, functionals can act only on finite elements of $\osext$, and this is the reason that we first prove a representation theorem when the set $\{\posmap(f) : f\prec \ts\}$ is supposed to be bounded above in $\os$. This will serve to guarantee that everything in sight is a finite element of $\osext$. It is thus that \cref{2_res:riesz_representation_theorem_for_contcts_finite_normal_case} is established, where the representing measure is finite (by assumption) and regular. The second representation theorem, valid for the general case, is then obtained from the finite case; see \cref{2_res:riesz_representation_theorem_for_contcts_normal_case}. Here the representing measure can be infinite, but regularity may then hold only locally. This should be compared to \cref{2_res:riesz_representation_theorem_for_contcts_normed_case}, where the also possibly infinite representing measure is always regular.

We shall have additional use for \cref{2_res:riesz_representation_theorem_for_contcts_finite_normal_case} for the finite case later on; see, e.g., the proofs of  \cref{2_res:riesz_representation_theorem_for_contots_normal_case,2_res:riesz_representation_theorem_for_contots_operators_on_hilbert_space}.

We start by introducing a notation.

Suppose that $\ms$ is a measurable space, that $\os$ is a \smc\ partially ordered vector space, and that $\npm:\alg\to\posos$ is a (finite-valued) set map. If $x^\prime:\os\to\RR$ is a positive functional, we define $\npmxp:\alg\to\posR$ by setting  $\npmxp(\mss)\coloneqq\lrf{\npm(\mss),x^\prime}$ for $\mss\in\alg$.

The following is immediate from the definitions and \cite[Proposition~3.9]{de_jeu_jiang:2021a}.
\begin{lemma}\label{2_res:vector_valued_measures_and_real_measures}
Let $\ms$ be a measurable space, let $\os$ be a \smc\ partially ordered vector space, and let $\npm:\alg\to\posos$ be a set map.
	\begin{enumerate}
		\item\label{2_part:vector_valued_measures_and_real_measures_1}
		If $\npm:\alg\to\posos$ is a measure, then $\npmxp:\alg\to\posR$ is a finite measure for all $x^\prime\in\pos{(\socdualos)}$.
		\item\label{2_part:vector_valued_measures_and_real_measures_2}
		If $\os$ is $\sigma$-normal, then the following are equivalent:
		\begin{enumerate}
			\item\label{2_part:vector_valued_measures_and_real_measures_2_1}
			$\npm:\alg\to\posos$ is a finite measure;
			\item\label{2_part:vector_valued_measures_and_real_measures_2_2}
			$\npmxp:\alg\to\posR$ is a finite measure for all $x^\prime\in\pos{(\socdualos)}$.
		\end{enumerate}	
	\end{enumerate}
\end{lemma}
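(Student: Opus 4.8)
This is essentially a matter of unwinding \cref{2_def:positive_pososext_valued_measure}, the one genuine input being the sequential counterpart of \cref{2_res:inf_and_sup_via_order_dual}, namely \cite[Proposition~3.9]{de_jeu_jiang:2021a}: for a $\sigma$-normal space $\os$ and an increasing sequence $\seq{x}$ in $\os$, one has $x_n\uparrow x$ in $\os$ precisely when $\f{x,x^\prime}=\sup_{n\geq 1}\f{x_n,x^\prime}$ for every $x^\prime\in\pos{(\socdualos)}$. Throughout, I would use repeatedly that, since $\npm$ takes values in $\posos$, the supremum appearing in \cref{2_eq:sigma_additivity} is attained in $\os$ and hence is a supremum in $\os$ as well (\cite[Lemma~2.5]{de_jeu_jiang:2021a}); in particular, for a pairwise disjoint sequence $\seq{\mss}$ in $\alg$ with $\bigcup_n\mss_n\in\alg$, the partial sums $s_N\coloneqq\sum_{n=1}^{N}\npm(\mss_n)$ satisfy $s_N\uparrow\npm(\bigcup_n\mss_n)$ in $\os$ whenever $\npm$ is a measure.

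For part~(1), I would fix $x^\prime\in\pos{(\socdualos)}$. Then $\npmxp(\emptyset)=\f{0,x^\prime}=0$, and $\npmxp\geq 0$ because $\npm\geq 0$ and $x^\prime\geq 0$. For $\sigma$-additivity, take $\seq{\mss}$ as above; applying the $\sigma$-order continuity of $x^\prime$ to the decreasing null sequence $\npm(\bigcup_n\mss_n)-s_N\downarrow 0$ yields $\f{s_N,x^\prime}\uparrow\f{\npm(\bigcup_n\mss_n),x^\prime}$, that is, $\sum_{n=1}^{\infty}\npmxp(\mss_n)=\npmxp(\bigcup_n\mss_n)$, which is an element of $\posR$. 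Hence $\npmxp$ is a finite measure.

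For part~(2), the implication \ref{2_part:vector_valued_measures_and_real_measures_2_1}$\Rightarrow$\ref{2_part:vector_valued_measures_and_real_measures_2_2} is a special case of part~(1), so only the converse requires work. Assuming $\os$ is $\sigma$-normal and every $\npmxp$ (with $x^\prime\in\pos{(\socdualos)}$) is a finite measure, I would first note that $\f{-\npm(\emptyset),x^\prime}=-\npmxp(\emptyset)=0\geq 0$ for all $x^\prime\in\pos{(\socdualos)}$, so that $\sigma$-normality gives $-\npm(\emptyset)\in\posos$; since also $\npm(\emptyset)\in\posos$ and $\posos$ is proper, $\npm(\emptyset)=0$. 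For $\sigma$-additivity, let $\seq{\mss}$ be as above and set $s_N\coloneqq\sum_{n=1}^{N}\npm(\mss_n)$, an increasing sequence in $\os$. For every $x^\prime\in\pos{(\socdualos)}$,
\[
\sup_{N\geq 1}\f{s_N,x^\prime}=\sum_{n=1}^{\infty}\npmxp(\mss_n)=\npmxp\bigl(\textstyle\bigcup_{n}\mss_n\bigr)=\f{\npm\bigl(\textstyle\bigcup_{n}\mss_n\bigr),x^\prime},
\]
so \cite[Proposition~3.9]{de_jeu_jiang:2021a} gives $s_N\uparrow\npm(\bigcup_n\mss_n)$ in $\os$; since this supremum is attained in $\os$ it is also the supremum in $\osext$, which is exactly the identity \cref{2_eq:sigma_additivity}. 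Therefore $\npm$ is a finite measure.

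I do not anticipate a real obstacle here: the argument is forced once \cite[Proposition~3.9]{de_jeu_jiang:2021a} is at hand. The only points that demand a little care are the bookkeeping distinction between suprema computed in $\os$ and in $\osext$ (covered by \cite[Lemma~2.5]{de_jeu_jiang:2021a}) and the brief appeal to properness of $\posos$ used to pass from $\npm(\emptyset)\in\posos$ and $-\npm(\emptyset)\in\posos$ to $\npm(\emptyset)=0$.
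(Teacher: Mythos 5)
Your proof is correct and is exactly the argument the paper intends: the paper states that this lemma is immediate from the definitions together with \cite[Proposition~3.9]{de_jeu_jiang:2021a}, which is precisely the sequential duality statement for $\sigma$-normal spaces that you invoke, and your unwinding of the definitions (including the bookkeeping between suprema in $\os$ and in $\osext$ and the properness argument for $\npm(\emptyset)=0$) fills in the details faithfully.
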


Likewise, the following follows directly from the definitions and \cref{2_res:inf_and_sup_via_order_dual}.

\begin{lemma}\label{2_res:regular_vector_valued_measures_and_regular_real_measures}
	Let $\ts$ be a locally compact Hausdorff space with Borel $\sigma$-algebra $\borel$, let $\os$ be a \mc\ and normal partially ordered vector space, and let $\npm:\alg\to\posos$ be a set map.

Then the following are equivalent:
	\begin{enumerate}
		\item\label{2_part:regular_vector_valued_measures_and_regular_real_measures_1}
		$\npm:\alg\to\posos$ is a finite measure;
		\item\label{2_part:regular_vector_valued_measures_and_regular_real_measures_2}
		$\npmxp:\alg\to\posR$ is a finite measure for all $x^\prime\in\pos{(\ocdualos)}$.
	\end{enumerate}

If this is the case, then the following are equivalent for a Borel set $\mss$:
\begin{enumerate_alpha}
	\item\label{2_part:regular_vector_valued_measures_and_regular_real_measures_3}
	$\npm:\alg\to\posos$ is inner regular \uppars{resp.\ outer regular, resp.\ \wir} at $\mss$;
	\item\label{2_part:regular_vector_valued_measures_and_regular_real_measures_4}
	$\npmxp:\alg\to\posR$ is inner regular \uppars{resp.\ outer regular, resp.\ \wir} at $\mss$ for all $x^\prime\in\ocdualos$.
\end{enumerate_alpha}			
\end{lemma}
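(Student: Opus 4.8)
The plan is to reduce both equivalences to the scalar situation by testing against the positive order continuous functionals on $\os$, with \cref{2_res:inf_and_sup_via_order_dual} serving as the bridge. The elementary fact used throughout is that, by normality of $\os$ together with properness of $\posos$, an element $y\in\os$ with $\f{y,x^\prime}=0$ for all $x^\prime\in\pos{(\ocdualos)}$ must be zero: apply the normality criterion of \cref{2_def:normal_space} to $y$ and to $-y$.

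For \ref{2_part:regular_vector_valued_measures_and_regular_real_measures_1}$\Leftrightarrow$\ref{2_part:regular_vector_valued_measures_and_regular_real_measures_2}: the implication \ref{2_part:regular_vector_valued_measures_and_regular_real_measures_1}$\Rightarrow$\ref{2_part:regular_vector_valued_measures_and_regular_real_measures_2} is part~\ref{2_part:vector_valued_measures_and_real_measures_1} of \cref{2_res:vector_valued_measures_and_real_measures} combined with the inclusion $\pos{(\ocdualos)}\subseteq\pos{(\socdualos)}$. For the converse, assume \ref{2_part:regular_vector_valued_measures_and_regular_real_measures_2}. Then $\f{\npm(\emptyset),x^\prime}=\npmxp(\emptyset)=0$ for all $x^\prime\in\pos{(\ocdualos)}$, so $\npm(\emptyset)=0$ by the fact just noted. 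Given a pairwise disjoint sequence $\seq{\mss}$ in $\borel$, the partial sums $\sum_{n=1}^N\npm(\mss_n)$ form an increasing sequence because $\npm$ is $\posos$-valued, so part~\ref{2_part:sup} of \cref{2_res:inf_and_sup_via_order_dual}, applied with $x\coloneqq\npm\!\left(\bigcup_{n=1}^\infty\mss_n\right)\in\os$, yields that the $\sigma$-additivity identity $\npm\!\left(\bigcup_{n=1}^\infty\mss_n\right)=\bigvee_{N=1}^\infty\sum_{n=1}^N\npm(\mss_n)$ holds if and only if $\npmxp\!\left(\bigcup_{n=1}^\infty\mss_n\right)=\sum_{n=1}^\infty\npmxp(\mss_n)$ for every $x^\prime\in\pos{(\ocdualos)}$; this latter identity holds because each $\npmxp$ is a finite measure. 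Hence $\npm$ is a measure, and it is finite since it is $\posos$-valued. (One uses here that, the partial sums being order bounded by $\npm\!\left(\bigcup_{n=1}^\infty\mss_n\right)\in\os$, their supremum in $\osext$ coincides with their supremum in $\os$.)

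For \ref{2_part:regular_vector_valued_measures_and_regular_real_measures_3}$\Leftrightarrow$\ref{2_part:regular_vector_valued_measures_and_regular_real_measures_4} (now with $\npm$ a finite measure), fix $\mss\in\borel$ and treat the three regularity notions in parallel. Inner regularity at $\mss$ is governed by the increasing net $K\mapsto\npm(K)$ over the compact $K\subseteq\mss$ directed by inclusion, outer regularity by the decreasing net $V\mapsto\npm(V)$ over the open $V\supseteq\mss$ directed by reverse inclusion, and weak inner regularity by the increasing net $K\mapsto\npm(\mss\cap K)$ over the compact subsets $K$ of $\ts$ directed by inclusion; in each case monotonicity of the measure $\npm$ makes the net automatically order bounded by $\npm(\mss)$, so the supremum (resp.\ infimum) appearing in \cref{2_def:regularity_of_measures} exists in $\os$ and agrees with the one in $\osext$. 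Applying part~\ref{2_part:sup} of \cref{2_res:inf_and_sup_via_order_dual} in the first and third cases, and part~\ref{2_part:inf} in the second, with $x\coloneqq\npm(\mss)$, shows that $\npm$ is inner regular (resp.\ outer regular, resp.\ \wir) at $\mss$ if and only if $\f{\npm(\mss),x^\prime}$ equals the supremum of the reals $\f{\npm(K),x^\prime}$ (resp.\ the infimum of the $\f{\npm(V),x^\prime}$, resp.\ the supremum of the $\f{\npm(\mss\cap K),x^\prime}$) for every $x^\prime\in\pos{(\ocdualos)}$. Since $\f{\npm(\,\cdot\,),x^\prime}=\npmxp(\,\cdot\,)$, this is precisely the statement that $\npmxp$ is inner regular (resp.\ outer regular, resp.\ \wir) at $\mss$ for every $x^\prime\in\pos{(\ocdualos)}$.

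There is no serious obstacle: the argument is bookkeeping around \cref{2_res:inf_and_sup_via_order_dual}. What needs care is identifying, in each of the three regularity cases, the correct monotone net of $\npm$-values whose convergence encodes the property, verifying that it is directed in the right direction and order bounded by $\npm(\mss)$ so that the hypothesis ``$x_\lambda\uparrow$'' or ``$x_\lambda\downarrow$'' of \cref{2_res:inf_and_sup_via_order_dual} is genuinely in force, and keeping track of the fact that — everything being finite — the suprema and infima in the definitions may equivalently be computed in $\osext$ or in $\os$ (the kind of fact collected in the technical lemmas of \cite{de_jeu_jiang:2021a}).
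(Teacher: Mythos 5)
Your proposal is correct and is essentially the argument the paper has in mind: the paper dispenses with this lemma by remarking that it "follows directly from the definitions and \cref{2_res:inf_and_sup_via_order_dual}", and your write-up simply fills in those details (testing against $\pos{(\ocdualos)}$, using normality plus properness of the cone to detect zero, and checking that the relevant monotone nets are directed and that the suprema/infima may be computed in $\os$ rather than $\osext$ because everything in sight is finite). No gaps.
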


We shall have use for the following immediate consequence of the above two results. Of course, finite measures are always Borel measures, but we have still included the redundant adjective for reasons of uniformity.

\begin{corollary}\label{2_res:relation_between_vector-valued_and_real_measures}
	Let $\ts$ be a locally compact Hausdorff space with Borel $\sigma$-algebra $\borel$, let $\os$ be a \mc\ and normal partially ordered vector space, and let $\npm:\alg\to\posos$ be a set map.

Then the following are equivalent:
\begin{enumerate}
	\item\label{2_part:relation_between_vector-valued_and_real_measures_1}
	$\npm:\alg\to\posos$ is a finite regular \uppars{resp.\ quasi-regular} Borel measure on $\ts$;
	\item\label{2_part:relation_between_vector-valued_and_real_measures_2}
	$\npmxp:\alg\to\posR$ is a finite regular \uppars{resp.\ quasi-regular} Borel measure on $\ts$ for all $x^\prime\in\pos{(\ocdualos)}$.
\end{enumerate}
\end{corollary}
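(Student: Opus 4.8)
The plan is to read the corollary off from \cref{2_res:regular_vector_valued_measures_and_regular_real_measures} (together with \cref{2_res:vector_valued_measures_and_real_measures}) by unwinding the definitions of a regular and of a quasi-regular Borel measure in \cref{2_def:regularity_of_measures} and then interchanging two universal quantifiers. First I would note that, since $\npm$ and each $\npmxp$ are finite-valued by hypothesis, they are in particular $\pososext$-valued, resp.\ $\posRext$-valued, so that \cref{2_def:regularity_of_measures} genuinely applies to them, and that a finite measure is automatically a Borel measure; thus the adjective ``Borel'' is for free once the ``measure'' part is in place, both for $\npm$ and for each $\npmxp$.

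Next I would dispose of the ``measure'' part: by the first equivalence in \cref{2_res:regular_vector_valued_measures_and_regular_real_measures}, the set map $\npm\colon\alg\to\posos$ is a finite measure if and only if $\npmxp\colon\alg\to\posR$ is a finite measure for every $x^\prime\in\pos{(\ocdualos)}$. So I may assume this common condition for the rest of the argument, which is precisely the standing hypothesis ``if this is the case'' under which the per-set regularity equivalences in \cref{2_res:regular_vector_valued_measures_and_regular_real_measures} become available. Recall now that $\npm$ is a regular Borel measure exactly when it is inner regular at every open subset $V$ of $\ts$ and outer regular at every Borel set $\mss$, and a quasi-regular Borel measure exactly when it is inner regular at every open $V$ and \wir\ at every Borel $\mss$; and similarly for each $\npmxp$.

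For each fixed such $V$, resp.\ $\mss$, the second part of \cref{2_res:regular_vector_valued_measures_and_regular_real_measures} tells us that $\npm$ has the relevant regularity property at that set if and only if $\npmxp$ has it at that set for all $x^\prime\in\pos{(\ocdualos)}$. Quantifying over all open $V$ and all Borel $\mss$, and then swapping the (commuting) universal quantifiers over these sets with the universal quantifier over $x^\prime$, yields exactly that $\npm$ is a finite regular, resp.\ quasi-regular, Borel measure on $\ts$ if and only if $\npmxp$ is so for all $x^\prime\in\pos{(\ocdualos)}$. There is essentially no obstacle beyond bookkeeping here: the one point that must be respected is the order of operations, namely that one first settles, via the first equivalence of \cref{2_res:regular_vector_valued_measures_and_regular_real_measures}, that $\npm$ is a measure, because the per-set regularity equivalences are only asserted under that assumption; the remaining interchange of quantifiers is trivial.
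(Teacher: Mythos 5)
Your proposal is correct and is exactly the argument the paper intends: it states the corollary as an immediate consequence of \cref{2_res:vector_valued_measures_and_real_measures} and \cref{2_res:regular_vector_valued_measures_and_regular_real_measures}, and your write-up just makes the bookkeeping explicit (finiteness first, then the per-set regularity equivalences, then the harmless interchange of universal quantifiers). Your observation that the per-set equivalences are only available once the finite-measure equivalence has been settled is the one point of substance, and you handle it correctly.
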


After these preparations, we can now establish our first representation theorem for \mc\ and normal partially ordered vector spaces. As may be clear from \cref{2_ex:combination_result_for_normality_and_monotone_completeness}, this is a rather large and varied class of spaces.

\begin{theorem}[Riesz representation theorem for $\contcts$: finite normal case]\label{2_res:riesz_representation_theorem_for_contcts_finite_normal_case}
	Let $\ts$ be a locally compact Hausdorff space, let $\os$ be a \mc\ and normal partially ordered vector space, and let $\posmap:\contcts\rightarrow\os$ be a positive operator such that $\{\posmap(f) : f\in\pos{\contcts},\,\norm{f}\leq 1\}$ is bounded above in $\os$; this is automatically the case if $\ts$ is compact.
	
	Then there exists a unique regular Borel measure $\npm:\borel\to\pososext$ on the Borel $\sigma$-algebra $\borel$ of $\ts$ such that
	\[
	\posmap(f)=\ointm{f}
	\]
	for all $f\in\contcts$. The measure $\npm$ is finite. If $V$ is a non-empty open subset of $\ts$, then
	\begin{equation}\label{2_eq:measure_of_open_subsets_finite_normal_case}
	\npm(V)=\bigvee\{\posmap(f) : f\prec V\}
	\end{equation}
	in $\os$. If $K$ is a compact subset of $\ts$, then
	\begin{equation}\label{2_eq:measure_of_compact_subsets_finite_normal_case}
	\npm(K)=\bigwedge\{\posmap(f) : K\prec f\}
	\end{equation}
	in $\os$.
\end{theorem}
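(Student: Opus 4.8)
The plan is to reduce everything to the classical Riesz representation theorem on $\RR$ by composing $\posmap$ with positive order continuous functionals, and then to reassemble the scalar-valued representing measures into a single $\os$-valued one using the normality of $\os$. First I would fix $x'\in\pos{(\ocdualos)}$ and form $x'\circ\posmap:\contcts\to\RR$, which is a positive functional; the hypothesis that $\{\posmap(f):f\in\pos{\contcts},\,\norm{f}\le1\}$ is bounded above in $\os$ guarantees that $x'\circ\posmap$ is bounded on that set, so that (by the classical theorem, itself a special case of \cref{2_res:riesz_representation_theorem_for_contcts_normed_case}) there is a unique \emph{finite} regular Borel measure $\npmxp$ on $\borel$ with $(x'\circ\posmap)(f)=\int_\ts f\di{\npmxp}$ for all $f\in\contcts$, and with $\npmxp(V)=\sup\{(x'\circ\posmap)(f):f\prec V\}$ and $\npmxp(K)=\inf\{(x'\circ\posmap)(f):K\prec f\}$.

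Next I would define the candidate $\npm:\borel\to\posos$ itself. The natural definition is $\npm(V)\coloneqq\bigvee\{\posmap(f):f\prec V\}$ for $V$ open (the supremum exists in $\osext$ by \mc ness, and it is finite because the set is bounded above in $\os$ by hypothesis, using positivity of $\posmap$ and that $\{\posmap(f):f\prec V\}$ is upward directed), and then $\npm(\mss)\coloneqq\bigwedge\{\npm(V):V\text{ open},\ \mss\subseteq V\}$ for arbitrary $\mss\in\borel$. The key point is that \cref{2_res:inf_and_sup_via_order_dual} pairs these order-theoretic suprema and infima with the scalar ones: $\f{\npm(V),x'}=\sup\{\f{\posmap(f),x'}:f\prec V\}=\npmxp(V)$, and similarly $\f{\npm(\mss),x'}=\inf\{\npmxp(V):V\text{ open},\ \mss\subseteq V\}=\npmxp(\mss)$, the last equality by outer regularity of the scalar measure $\npmxp$. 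So $\npm_{x'}$ (in the notation introduced just before \cref{2_res:vector_valued_measures_and_real_measures}) coincides with the classical representing measure of $x'\circ\posmap$ for every $x'\in\pos{(\ocdualos)}$. Since each such $\npmxp$ is a finite regular Borel measure on $\ts$, \cref{2_res:relation_between_vector-valued_and_real_measures} immediately gives that $\npm$ is itself a finite regular Borel measure on $\ts$.

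It then remains to check the representation identity and uniqueness. For $f\in\contcts$, both $\posmap(f)$ and $\ointm{f}$ are finite elements of $\os$ (the latter because $\npm$ is finite and $f$ is bounded, so $f\in\integrablefunts$), and for every $x'\in\pos{(\ocdualos)}$ we have $\f{\ointm{f},x'}=\int_\ts f\di{\npmxp}=(x'\circ\posmap)(f)=\f{\posmap(f),x'}$, where the first equality uses that the scalar functional commutes with the order integral — i.e.\ $x'\circ I_\npm=I_{\npmxp}$ — which follows from \cite[Proposition~3.9]{de_jeu_jiang:2021a}-type compatibility of order continuous functionals with suprema applied to the defining sequence $\varphi_n\uparrow |f|^{\pm}$ of elementary functions. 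Since $\pos{(\ocdualos)}$ separates the points of $\os$ by normality, $\posmap(f)=\ointm{f}$. The formulas \cref{2_eq:measure_of_open_subsets_finite_normal_case,2_eq:measure_of_compact_subsets_finite_normal_case} hold by construction for open sets, and for compact $K$ one tests against $x'$ as above and uses $\npmxp(K)=\inf\{(x'\circ\posmap)(f):K\prec f\}$ together with \cref{2_res:inf_and_sup_via_order_dual}. For uniqueness, if $\npm'$ is another regular Borel measure representing $\posmap$, then for each $x'\in\pos{(\ocdualos)}$ the scalar measure $\npm'_{x'}$ is a finite regular Borel measure (by \cref{2_res:relation_between_vector-valued_and_real_measures}) representing $x'\circ\posmap$, hence equals $\npmxp$ by the uniqueness clause of the classical theorem; so $\f{\npm'(\mss),x'}=\f{\npm(\mss),x'}$ for all $x'$ and all $\mss\in\borel$, and normality forces $\npm'=\npm$.

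The main obstacle I anticipate is the finiteness bookkeeping: one must be careful that the order-theoretic suprema defining $\npm(V)$ actually land in $\os$ (not merely in $\osext$) before one is entitled to pair them with functionals, and that the boundedness hypothesis on $\{\posmap(f):\norm{f}\le1\}$ propagates to boundedness of $\{\posmap(f):f\prec V\}$ for every open $V$ — the latter is clear since such $f$ satisfy $\norm{f}\le 1$. Once finiteness is secured, the argument is essentially a systematic application of \cref{2_res:inf_and_sup_via_order_dual} and \cref{2_res:relation_between_vector-valued_and_real_measures} on top of the classical scalar theorem.
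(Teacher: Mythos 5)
Your proposal follows essentially the same route as the paper's proof: define $\npm$ by the supremum formula on open sets and the infimum formula on general Borel sets, identify the scalar measure $\npm_{x^\prime}$ with the classical representing measure of $x^\prime\circ\posmap$ via \cref{2_res:inf_and_sup_via_order_dual}, invoke \cref{2_res:relation_between_vector-valued_and_real_measures} to get a finite regular Borel measure, and verify the representation identity by approximating with elementary functions and using the $\sigma$-order continuity of $x^\prime$ together with normality. The one step that needs patching is uniqueness: a competing regular Borel measure $\npm^\prime:\borel\to\pososext$ is, in this paper's sense, a priori allowed to take the value $\largest$ on non-compact Borel sets, so you cannot form the scalar measures $\npm^\prime_{x^\prime}$, nor apply \cref{2_res:relation_between_vector-valued_and_real_measures}, until you have first shown that $\npm^\prime$ is finite. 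This is easily repaired: inner regularity of $\npm^\prime$ at the open set $\ts$, together with $\npm^\prime(K)=\orderintegral{\ts}{\indicator{K}}{\npm^\prime}\leq\posmap(f)$ for $K\prec f$ and the boundedness hypothesis, shows $\npm^\prime(\ts)\in\os$; alternatively one can argue as the paper does, comparing $\npm(K)$ and $\npm^\prime(K)$ directly on compact sets via the order integral and Urysohn functions and then invoking regularity, which works even for possibly infinite measures.
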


\begin{remark}\label{2_rem:connection_with_sot_in_finite_normal_case}
	As noted in part~\ref{2_part:comparison_with_vector_measures_and_sot_sigma_additivity_2} of \cref{2_rem:comparison_with_vector_measures_and_sot_sigma_additivity}, when the space $\os$ in \cref{2_res:riesz_representation_theorem_for_contcts_finite_normal_case} consists of the regular operators on a Banach lattice with an order continuous norm, or of the self-adjoint operators in a strongly closed complex linear subspace of the bounded operators on a complex Hilbert space, then $\npm$ is strongly $\sigma$-additive.
\end{remark}

\begin{proof}[Proof of \cref{2_res:riesz_representation_theorem_for_contcts_finite_normal_case}]
If $\ts$ is compact, then $\{\posmap(f) : f\in\pos{\contcts},\,\norm{f}\leq 1\}$ is obviously bounded above in $\os$ by $\posmap(\onefunction)$.

For general locally compact $\ts$, suppose that $\{\posmap(f) : f\in\pos{\contcts},\,\norm{f}\leq 1\}$ is bounded above in $\os$. Motivated by the proof of \cref{2_res:riesz_representation_theorem_for_contcts_normed_case}, we  define the set map $\npm: \borel\to\posos$ by setting $\npm(\emptyset)\coloneqq0$,
\begin{align}
\npm(V)&\coloneqq\bigvee\{\posmap(f) : f\prec V\}\label{2_eq:out_of_the_blue_for_open_subsets}\\
\intertext{in $\osext$ for every non-empty open subset $V$ of $\ts$, and}
\npm(\mss)&\coloneqq\bigwedge\{\npm(V) : V \text{ is open and }\mss\subseteq V\}\label{2_eq:out_of_the_blue_for_borel_sets}\\
\end{align}
in $\osext$ for every Borel set $\mss$. The remarks that were made  in the beginning of the proof of \cref{2_res:riesz_representation_theorem_for_contcts_normed_case} apply here as well, showing that both $\npm(V)$ and $\npm(\mss)$ are well-defined, and that the two definitions agree on the open subsets of $\ts$. It is clear from the hypotheses that $\npm(\mss)$ is, in fact, a finite element of $\osext$ for every Borel set $\mss$, so that we can let functionals on $\os$ act on the range of $\npm$.

We shall show that the set map $\npm$ has all required properties.

To this end, we define, for every $x^\prime\in\pos{(\ocdualos)}$, the functional $\posmapxp : \contcts\to\RR$ by setting
\[
\posmapxp(f)\coloneqq\f{\posmap(f),x^\prime}
\]
for $f\in\contcts$. Then $\posmapxp$ is a positive functional on $\contcts$. The classical Riesz representation theorem (the case where $\os=\RR$ in \cref{2_res:riesz_representation_theorem_for_contcts_finite_normal_case}) informs us that, for all $x^\prime\in\pos{(\ocdualos)}$, there exists a unique regular Borel measure $\npnxp:\borel\to\posRext$ such that
\begin{equation}\label{2_eq:npnxp_represents_posmapxp}
\posmapxp(f)=\orderintegral{\ts}{f}{\!\npnxp}
\end{equation}
for all $f\in\contcts$. Since $\{\posmapxp(f): f\prec \ts\}$ is evidently bounded above in $\RR$, \cref{2_res:riesz_representation_theorem_for_contcts_normed_case} shows that $\npnxp$ is, in fact, a finite regular Borel measure on $\ts$ for all $x^\prime\in\pos{(\ocdualos)}$.

For each $x^\prime\in\pos{(\ocdualos)}$, we know from \cref{2_eq:measure_of_open_subsets_infinite_normed_case} that
\[
\npnxp(V)=\sup\{\posmapxp(f) : f\prec V\}
\]
for all non-empty open subsets $V$ of $\ts$. On the other hand, we see from the defining \cref{2_eq:out_of_the_blue_for_open_subsets} that
\[
\npmxp(V)=\sup\{\posmapxp(f) : f\prec V\}
\]
for all $x^\prime\in\pos{(\ocdualos)}$ and all non-empty open subsets $V$ of $\ts$. Hence $\npmxp(V)=\npnxp(V)$ for all non-empty open subsets $V$ of $\ts$ and all $x^\prime\in\pos{(\ocdualos)}$; this is trivially true for the empty subset.

Furthermore, the outer regularity of $\npnxp$ means that, for every Borel set $\mss$,
\[
\npnxp(\mss)=\inf\{\npnxp(V) : V\text{ is open and } \mss\subseteq V\}.
\]
Using the defining \cref{2_eq:out_of_the_blue_for_open_subsets} and what we have just observed for the open subsets of $\ts$,  we therefore see that, for every Borel set $\mss$,
\begin{align*}
\npmxp(\mss)&=\inf\{\npmxp(V) : V\text{ is open and } \mss\subseteq V\}\\
&= \inf\{\npnxp(V) : V\text{ is open and } \mss\subseteq V\}\\
&=\npnxp(\mss).
\end{align*}
We conclude that $\npmxp=\npnxp$ for all $x^\prime\in\pos{(\ocdualos)}$. Since we know that $\npnxp$ is a regular Borel measure on $\ts$ for all $x^\prime$, it now follows from \cref{2_res:relation_between_vector-valued_and_real_measures} that $\npm$ is a regular Borel measure on $\ts$.

It is clear from \cref{2_eq:out_of_the_blue_for_open_subsets} that $\npm$ is a finite measure.

\Cref{2_eq:measure_of_open_subsets_finite_normal_case} holds by construction.

If $K$ is a compact subset of $\ts$, then \cref{2_eq:measure_of_compact_subsets_infinite_normed_case} shows that
\[
\npmxp(K)=\npnxp(K)=\bigwedge\{\posmapxp(f): K\prec f\}.
\]
It then follows from \cref{2_res:inf_and_sup_via_order_dual} that
\[
\npm(K)=\bigwedge\{\posmap(f): K\prec f\},
\]
which is \cref{2_eq:measure_of_compact_subsets_finite_normal_case}.

Let $f\in\pos{\contcts}$, and choose a sequence $\seq{\varphi}$ in $\elemfunts$ such that $\varphi_n\uparrow f$ in $\os$ pointwise. Fix $x^\prime\in\pos{(\ocdualos)}$. Using the definition and \cref{2_eq:npnxp_represents_posmapxp}, where we already know that $\npmxp=\npnxp$, we see that
\[
\orderintegral{\ts}{\varphi_n}{\npmxp}\uparrow\orderintegral{\ts}{f}{\npmxp}=\posmapxp(f).
\]
On the other hand, it is clear from the definition of the integral of elementary functions that
\[
\orderintegral{\ts}{\varphi_n}{\npmxp}=\lrf{\ointm{\varphi_n},x^\prime}
\]
for all $n\geq 1$. Since $\ointm{\varphi_n}\uparrow\ointm{f}$ by definition, the $\sigma$-order continuity of $x^\prime$ therefore implies that
\[
\orderintegral{\ts}{\varphi_n}{\npmxp}=\lrf{\ointm{\varphi_n},x^\prime}\uparrow\lrf{\ointm{f},x^\prime}.
\]
We conclude that $\posmapxp(f)=\lrf{\ointm{f},x^\prime}$. Since this holds for all $x^\prime\in\pos{(\ocdualos)}$ and $\os$ is normal, it follows that $\posmap(f)=\ointm{f}$ for all $f\in\pos{\contcts}$. By linearity, this is then also true for general $f\in\contcts$.

The uniqueness of $\npm$ as an a priori possibly infinite regular Borel measure   rep\-re\-sen\-ting $\posmap$ follows as in the conclusion of the proof of \cref{2_res:riesz_representation_theorem_for_contcts_normed_case}.
\end{proof}

We can now also obtain a representation theorem when $\{\posmap(f):f\prec \ts\}$ need not be bounded above in $\os$. The idea is to start by using  \cref{2_res:riesz_representation_theorem_for_contcts_finite_normal_case} locally. If $U\in\rcompacto$ (that is: if $U$ is an open relatively compact subset of $\ts$), then there exists $\widetilde f\in\contcts$ such that $\overline{U}\prec \widetilde f$. This implies that $f\leq\widetilde f$ whenever $f\prec U$, so that $\posmap(f)\leq\posmap(\widetilde f)$ whenever $f\prec U$. We are now in the finite case that is covered by \cref{2_res:riesz_representation_theorem_for_contcts_finite_normal_case}, so that there exists a unique regular Borel measure $\npm_U$ on $U$ such that $\posmap(f)=\orderintegral{U}{f}{\npm_U}$ for all $f\in\contcts$ that are supported in $U$. These local measures $\npm_U$ on $U$ can then seen to be the restrictions of a global measure on $\ts$ using a technique that is also employed in \cite[proof of Theorem~1]{wright:1971a}.

The details are contained in the proof of the following theorem. As \cref{2_res:riesz_representation_theorem_for_contcts_finite_normal_case}, it applies to the variety of spaces in \cref{2_ex:combination_result_for_normality_and_monotone_completeness}.

\begin{theorem}[Riesz representation theorem for $\contcts$: normal case]\label{2_res:riesz_representation_theorem_for_contcts_normal_case}
	Let $\ts$ be a locally compact Hausdorff space, let $\os$ be a \mc\ and normal partially ordered vector space, and let $\posmap:\contcts\to\os$ be a positive operator.
	
	Then there exists a unique measure $\npm:\borel\to\pososext$ such that:
	\begin{enumerate}
		\item\label{2_part:riesz_representation_theorem_for_contcts_normal_case_1}
		For every $U\in\rcompacto$, the restriction $\npm_U$ of $\npm$ to $\borel_U$ is a regular Borel measure on $U$;
		\item\label{2_part:riesz_representation_theorem_for_contcts_normal_case_2}
		$\npm$ is \wir\ at all Borel sets;
		\item\label{2_part:riesz_representation_theorem_for_contcts_normal_case_3}
		$\posmap(f)=\ointm{f}$ for all $f\in\contcts$.
	\end{enumerate}
	This measure $\npm$ is a quasi-regular Borel measure.
	
	If $\widetilde\npm$ is a regular Borel measure on $\ts$ such that $\posmap(f)=\ointm{f}$ for all $f\in\contcts$, then $\widetilde\npm\geq\npm$. Furthermore, $\widetilde\npm$ and $\npm$ agree at all open subsets of $\ts$; at all compact subsets of $\ts$; at all Borel sets with finite $\widetilde\npm$-measure; and at all Borel sets with infinite $\npm$-measure.  	
\end{theorem}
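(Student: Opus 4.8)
\emph{Approach.} The plan is to carry out the localisation-and-gluing strategy indicated just before the statement. For $U\in\rcompacto$, choose $\widetilde f\in\contcts$ with $\overline{U}\prec\widetilde f$; then $g\leq\widetilde f$, so $\posmap(g)\leq\posmap(\widetilde f)$, for every $g$ in the positive cone of $\contc{U}$ with $\norm{g}\leq 1$, where $\contc{U}$ is identified (via extension by zero) with the functions in $\contcts$ supported in $U$. Hence \cref{2_res:riesz_representation_theorem_for_contcts_finite_normal_case} applies to $\posmap$ restricted to $\contc{U}$ and yields a unique finite regular Borel measure $\npn_U$ on $U$ with $\posmap(g)=\orderintegral{U}{g}{\npn_U}$ for all $g\in\contc{U}$. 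First I would check that these local measures are compatible: if $U_1,U_2\in\rcompacto$, then $U_1\cap U_2\in\rcompacto$, and by \cref{2_res:restricted_measure_is_regular_borel_measure} both $\npn_{U_1}$ and $\npn_{U_2}$ restrict on $\borel_{U_1\cap U_2}$ to a regular Borel measure representing $\posmap$ on $\contc{U_1\cap U_2}$ — the order integral of a function supported in $U_1\cap U_2$ being insensitive to the set one integrates over — so uniqueness in \cref{2_res:riesz_representation_theorem_for_contcts_finite_normal_case} makes them agree there. Consequently $U\mapsto\npn_U(\mss\cap U)$ is an increasing net along the directed set $\rcompacto$ for each $\mss\in\borel$, and I would set
\[
\npm(\mss)\coloneqq\bigvee\{\npn_U(\mss\cap U):U\in\rcompacto\}
\]
in $\osext$, the supremum existing by \mc ness. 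Compatibility gives $\npm_U=\npn_U$ for every $U\in\rcompacto$, which is property~\ref{2_part:riesz_representation_theorem_for_contcts_normal_case_1}, and also $\npm(\mss)=\bigvee\{\npm(\mss\cap V):V\in\rcompacto\}$, which is property~\ref{2_part:riesz_representation_theorem_for_contcts_normal_case_2}.

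\emph{Main obstacle.} I expect the verification that $\npm$ is a measure in the sense of \cref{2_def:positive_pososext_valued_measure} to be the most delicate step: it requires manipulating suprema and finite sums of increasing nets in the extended cone $\osext$ while tracking which quantities are finite, for which the technical results in \cite[Lemmas~2.3--2.5]{de_jeu_jiang:2021a} are designed. For a pairwise disjoint sequence $\seq{\mss}$ in $\borel$ with union $\mss$, put $\mss^{(N)}\coloneqq\bigcup_{n=1}^N\mss_n$. Finite additivity of the $\npn_U$, combined with the rule that a supremum of sums of increasing nets equals the sum of the suprema, gives $\npm(\mss^{(N)})=\sum_{n=1}^N\npm(\mss_n)$. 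Next, $\sigma$-additivity of each $\npn_U$ gives $\npm(\mss)=\bigvee_U\bigvee_N\npn_U(\mss^{(N)}\cap U)$; since $\npn_U(\mss^{(N)}\cap U)$ is increasing in both $U$ and $N$, the two suprema may be interchanged on the strength of the universal property of the supremum alone, yielding $\npm(\mss)=\bigvee_N\npm(\mss^{(N)})=\bigvee_N\sum_{n=1}^N\npm(\mss_n)$. Once $\npm\in\posextmeasts$ is established, property~\ref{2_part:riesz_representation_theorem_for_contcts_normal_case_3} is immediate: any $f\in\contcts$ is supported in some $U\in\rcompacto$, so $\posmap(f)=\orderintegral{U}{f}{\npn_U}=\ointm{f}$. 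That $\npm$ is a quasi-regular Borel measure then follows by combining properties~\ref{2_part:riesz_representation_theorem_for_contcts_normal_case_1} and~\ref{2_part:riesz_representation_theorem_for_contcts_normal_case_2} with \cref{2_res:mu_is_inner_regular_at_finite_borel_sets_if_all_mu_u_are_regular_borel_measures}\ref{2_part:mu_is_inner_regular_at_finite_borel_sets_if_all_mu_u_are_regular_borel_measures_1} (which even yields inner regularity at \emph{all} Borel sets) and the observation that $\npm(K)=\npn_U(K)\in\os$ whenever $K$ is compact and $K\subseteq U\in\rcompacto$.

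\emph{Uniqueness.} Suppose $\npm^\prime$ also satisfies \ref{2_part:riesz_representation_theorem_for_contcts_normal_case_1}--\ref{2_part:riesz_representation_theorem_for_contcts_normal_case_3}. By~\ref{2_part:riesz_representation_theorem_for_contcts_normal_case_1} and~\ref{2_part:riesz_representation_theorem_for_contcts_normal_case_3}, the restriction $\npm^\prime_U$ is a regular Borel measure on $U$ representing $\posmap$ on $\contc{U}$, hence equals $\npn_U$ for every $U\in\rcompacto$ by uniqueness in \cref{2_res:riesz_representation_theorem_for_contcts_finite_normal_case}; since $\npm$ and $\npm^\prime$ are both \wir\ at all Borel sets, $\npm^\prime(\mss)=\bigvee_{V\in\rcompacto}\npm^\prime(\mss\cap V)=\bigvee_{V\in\rcompacto}\npm(\mss\cap V)=\npm(\mss)$.

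\emph{Comparison with a regular Borel measure.} Let $\widetilde\npm$ be a regular Borel measure on $\ts$ representing $\posmap$. By \cref{2_res:restricted_measure_is_regular_borel_measure} and the uniqueness just invoked, $\widetilde\npm_U=\npn_U=\npm_U$ for every $U\in\rcompacto$, so $\widetilde\npm$ and $\npm$ agree on every Borel set contained in some member of $\rcompacto$ — in particular on every compact set and on every set $\mss\cap K$ with $K$ compact. Since $\npm$ is \wir\ everywhere, $\npm(\mss)=\bigvee\{\npm(\mss\cap K):K\ \text{compact}\}=\bigvee\{\widetilde\npm(\mss\cap K):K\ \text{compact}\}\leq\widetilde\npm(\mss)$, so $\widetilde\npm\geq\npm$. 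At an open set both measures are inner regular (the one by definition, the other by \cref{2_res:mu_is_inner_regular_at_finite_borel_sets_if_all_mu_u_are_regular_borel_measures}) and agree on compact sets, hence agree; at a Borel set $\mss$ with $\widetilde\npm(\mss)\in\os$, \cref{2_res:mu_is_inner_regular_at_finite_borel_sets} makes $\widetilde\npm$ inner regular at $\mss$ and $\npm$ is inner regular there too, so agreement on compact sets gives $\widetilde\npm(\mss)=\npm(\mss)$; and at a Borel set $\mss$ with $\npm(\mss)=\largest$ one has $\largest=\npm(\mss)\leq\widetilde\npm(\mss)$, forcing $\widetilde\npm(\mss)=\largest$. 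The only point requiring care here is the interaction between finiteness of $\widetilde\npm(\mss)$ and inner regularity of $\widetilde\npm$ at $\mss$, which is exactly the content of \cref{2_res:mu_is_inner_regular_at_finite_borel_sets}.
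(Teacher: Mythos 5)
Your proposal is correct and follows essentially the same localisation-and-gluing strategy as the paper's own proof: apply the finite normal case on each $U\in\rcompacto$, patch the local measures together by taking suprema, and derive the comparison statements from uniqueness on the $U$'s together with the regularity results of \cref{2_sec:measures_on_locally_compact_hausdorff_spaces}. The only (harmless) deviations are cosmetic: you obtain $\sigma$-additivity by interchanging iterated suprema where the paper invokes continuity from below, and you get $\widetilde\npm\geq\npm$ from weak inner regularity of $\npm$ rather than from outer regularity of $\widetilde\npm$.
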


\begin{remark}\label{2_rem:connection_with_sot_in_normal_case}\quad
	\begin{enumerate}
		\item\label{2_part:connection_with_sot_in_normal_case_1}
		For a partially ordered vector space of operators, it is not at all uncommon\textemdash see \cref{2_ex:combination_result_for_normality_and_monotone_completeness} and also \cite[Section~3]{de_jeu_jiang:2021a}\textemdash to be \mc\ and normal, so that \cref{2_res:riesz_representation_theorem_for_contcts_normal_case} applies.
		\item\label{2_part:connection_with_sot_in_normal_case_2}
		As noted in part~\ref{2_part:comparison_with_vector_measures_and_sot_sigma_additivity_2} of \cref{2_rem:comparison_with_vector_measures_and_sot_sigma_additivity}, when the space $\os$ in \cref{2_res:riesz_representation_theorem_for_contcts_normal_case} consists of the regular operators on a Banach lattice with an order continuous norm, or of the self-adjoint operators in a strongly closed complex linear subspace of the bounded operators on a complex Hilbert space, and when the measure $\npm$ is finite, then $\npm$ is strongly $\sigma$-additive.
		\item\label{2_part:connection_with_sot_in_normal_case_3}
		We mention the following consequence of the last part of \cref{2_res:riesz_representation_theorem_for_contcts_normal_case}: if $\mss$ is a Borel set such that $\widetilde\npm(\mss)\neq\npm(\mss)$, then $\mss$ is neither open nor compact, $\widetilde\npm(\mss)=\largest$, and $\npm(\mss)$ is finite. It also follows that, if $\posmap$ is represented by a finite regular Borel measure $\widetilde\npm$, then it must be the case that $\widetilde\npm=\npm$.
	\end{enumerate}
\end{remark}

\begin{proof}[Proof of \cref{2_res:riesz_representation_theorem_for_contcts_normal_case}]
	The uniqueness is easily taken care of. For this, we take a non-empty open subset $U$ of $\ts$, and we view $\contc{U}$ as a linear subspace of $\contcts$ by extending an $f$ in $\contc{U}$ to an element $f^\prime$ of $\contcts$ that is zero outside $U$. We obtain a positive operator $\posmap_U:\contc{U}\to\os$ by setting $\posmap_U(f)\coloneqq \posmap(f^\prime)$ for $f\in\contc{U}$, and then $\posmap_U(f)=\posmap(f^\prime)=\ointm{f^\prime}=\orderintegral{U}{f}{\npm_U}$ for $f\in\contc{U}$. Because $\npm_U$ is supposed to be a regular Borel measure on $U$, the uniqueness statement in \cref{2_res:riesz_representation_theorem_for_contcts_finite_normal_case} implies that $\npm_U$ is uniquely determined. Since $\npm$ is \wir\ at all Borel sets, i.e., since
	\[
	\npm(\mss)=\bigvee\{\npm(\mss\cap U) : U\in\rcompacto\}=\bigvee\{\npm_U(\mss\cap U) : U\in\rcompacto\},
	\]
	for every Borel set $\mss$, we see that $\npm$ itself is unique.
	
	This proof of the uniqueness also shows how to find $\mu$, as follows. If $U\in\rcompacto$, then, as explained preceding the theorem, there exists a unique regular Borel measure $\npm_U$ on $U$ such that $\posmap(f)=\orderintegral{U}{f}{\npm_U}$ for all $f\in\contc{U}$.
	
	We shall now patch these $\npm_U$ together to obtain a measure on $\ts$.
	
	As a preparation for this, we note the following.
	
    If $U,V\in\rcompacto$ and $U\cap V\neq\emptyset$, then the restriction of $\npm_U$ to $U\cap V$ is a regular Borel measure on $U\cap V$ that represents the restriction of $\posmap$ to $\contc{U\cap V}$. Since the same holds for $\npm_V$, the uniqueness statement of \cref{2_res:riesz_representation_theorem_for_contcts_finite_normal_case} shows that the restrictions of $\npm_U$ and $\npm_V$ to $U\cap V$ coincide.

    We can now define a set function on $\borel$ that will turn out to have the desired properties. If $\mss$ is  Borel set, we set
    \begin{equation}\label{2_eq:patching_together_the_local_measures}
    \npm(\mss)\coloneqq\bigvee\{\npm_U(\mss\cap U) : U\in\rcompacto\}
    \end{equation}
    in $\osext$. The supremum in the right hand side of this equation exists in $\osext$, since the set is upward directed. Indeed, if $U,V\in\rcompacto$, then $\npm_U(\mss\cap U)=\npm_{U\cup V}(\mss\cap U)\leq\npm_{U\cup V}(\mss\cap(U\cup V))$, and likewise $\npm_V(\mss\cap V)\leq\npm_{U\cup V}(\mss\cap(U\cup V))$.

    \emph{We claim that $\npm(\mss)=\npm_{U_0}(\mss)$ whenever $\mss$ is a Borel set and $U_0\in\rcompacto$ is such that $\mss\subseteq U_0$.}

    To see this, note that certainly $\npm(\mss)\geq\npm_{U_0}(\mss\cap U_0)=\npm_{U_0}(\mss)$. Furthermore, if $U\in\rcompacto$, then $\npm_U(\mss\cap U)=\npm_U(\mss\cap U_0\cap U)=\npm_{U_0}(\mss\cap U_0\cap U)\leq\npm_{U_0}(\mss)$. This implies that $\npm(\mss)\leq \npm_{U_0}(\mss)$. We conclude that $\npm(\mss)=\npm_{U_0}(\mss)$, as required.

    \emph{We shall now show that $\npm$ is a measure.}

    It is clear that $\npm(\emptyset)=0$. The first step towards $\sigma$-additivity is to show that $\npm$ is finitely additive. Let $\mss_1$ and $\mss_2$ be disjoint Borel sets. Then
    \begin{align*}
    \npm(\mss_1\cup \mss_2)
    &=\bigvee\{\npm_U((\mss_1\cup \mss_2)\cap U) : U\in\rcompacto\}\\
    &=\bigvee\{\npm_U((\mss_1\cap U)\cup(\mss_2\cap U)) : U\in\rcompacto\}\\
    &=\bigvee\{\npm_U(\mss_1\cap U)+\npm_U(\mss_2\cap U) : U\in\rcompacto\}\\
    &\leq\npm(\mss_1)+\npm(\mss_2)\\
    &=\bigvee\{\npm_U(\mss_1\cap U): U\in\rcompacto\}+\bigvee\{\npm_V(\mss_2\cap V) : V\in\rcompacto\}\\
    &=\bigvee\{\npm_U(\mss_1\cap U)+\npm_V(\mss_2\cap V) : U, V \in\rcompacto\}\\
    &=\bigvee\{\npm_{U\cup V}(\mss_1\cap U)+\npm_{U\cup V}(\mss_2\cap V) : U, V \in\rcompacto\}\\
    &=\bigvee\{\npm_{U\cup V}((\mss_1\cap U)\cup(\mss_2\cap V)) : U, V \in\rcompacto\}\\
    &\leq\bigvee\{\npm_{U\cup V}((\mss_1\cup \mss_2)\cap (U\cup V) : U,V\in\rcompacto\}\\
    &=\bigvee\{\npm_U((\mss_1\cup \mss_2)\cap U) : U\in\rcompacto\}\\
    &=\npm(\mss_1\cup \mss_2).
    \end{align*}
    It follows that $\npm$ is finitely additive.

    Now that we know that $\npm$ is finitely additive, the $\sigma$-additivity is easily seen to be consequence of the fact that $\npm\left(\bigcup_{n=1}^\infty\mss_n\right)=\bigvee_{n=1}^\infty\npm(\mss_n)$ for every increasing sequence $\seq{\mss}$ of Borel sets; we shall now show this. Using \cite[Proposition~4.5]{de_jeu_jiang:2021a}, we see that
    \begin{align*}
    \npm\left( \bigcup_{n=1}^\infty\mss_n\right)
    &=\bigvee\left\{\npm_U\left(\left(\bigcup_{n=1}^\infty\mss_n\right)\cap U\right) : U\in\rcompacto\right\}\\
    &=\bigvee\left\{\bigvee_{n=1}^\infty\npm_U(\mss_n\cap U) : U\in\rcompacto\right\}\\
    &=\bigvee_{n=1}^\infty\left( \bigvee\{\npm_U(\mss_n\cap U) : U\in\rcompacto\}\right)\\
    &=\bigvee_{n=1}^\infty\npm(\mss_n),
    \end{align*}
    as required.

    This concludes the proof that $\npm$ is a measure.

    \emph{The measure $\npm$ is \wir\ at all Borel sets.}

    Since we know that the restrictions of $\npm$ to the $U\in\rcompacto$ are the $\npm_U$, this is clear from \cref{2_eq:patching_together_the_local_measures}.

    \emph{It is now easy to show that $\posmap(f)=\ointm{f}$ for all $f\in\contcts$}.

    Indeed, one can choose $U\in\rcompacto$ such that $f\in\contc{U}$. Since we have already observed that the restriction of $\npm$ to $U$ is $\npm_U$, we see that $\ointm{f}=\orderintegral{U}{f}{\npm_U}$, and $\orderintegral{U}{f}{\npm_U}=\posmap(f)$ by construction.

    \emph{The measure $\npm$ is a Borel measure.}

    Since every compact subset of $\ts$ is contained in a relatively open compact subset of $\ts$, and since  $\npm_U$ is a finite measures for all $U\in\rcompacto$, this is clear.

    \emph{The measure $\npm$ is inner regular at all open sets.}

    Let $V$ be an open subset of $\ts$. Using that, for each $U\in\rcompacto$, $\npm_U$ is inner regular at all open subsets of $U$, we see that
    \begin{align*}
    \npm(V)&=\bigvee\{\npm_U(V\cap U) : U\in\rcompacto\}\\
    &=\bigvee_{U\in\rcompacto}\left\{\bigvee\{\npm_U(K) : K \text{ is compact and } K\subseteq V\cap U \}\right\}\\
    &=\bigvee\{\npm_U(K) :  K \text{ is compact and } K\subseteq V\cap U \text{ for some } U\in\rcompacto\}\\
    &=\bigvee\{\npm(K) : K \text{ is compact and } K\subseteq V\cap U\ \text{ for some } U\in\rcompacto\}\\
    &\leq\bigvee\{\npm(K) :  K \text{ is compact and } K\subseteq V\}\\
    &\leq\npm(V).
    \end{align*}

    \emph{Since $\npm$ is a Borel measure that is inner regular at all open subsets of $\ts$ and \wir\ at all Borel sets, it is a quasi-regular Borel measure.}

    We turn to the remaining statements.

    Suppose that $\widetilde\npm$ is a regular Borel measure that also represents $\posmap$. Then, for all $u\in\rcompacto$, $\widetilde\npm_U$ is a regular Borel measure that represents the restriction of $\posmap$ to $\contc{U}$. By uniqueness, we see that $\widetilde\npm_U=\npm_U$ for all $U\in\rcompacto$. This implies that $\widetilde\npm$ and $\npm$ agree at all compact subsets of $\ts$ and then, by inner regularity, also at all open subsets of $\ts$. Knowing this, and using the outer regularity of $\widetilde\npm$, we see that, for a Borel set $\mss$,
    \begin{align*}
    \widetilde\npm(\mss)
    &=\bigwedge\{\widetilde\npm(V) : V \text{ is open and } \mss\subseteq V\}\\
    &=\bigwedge\{\npm(V) :  V \text{ is open and } \mss\subseteq V\}\\
    &\geq\npm(\mss),
    \end{align*}
    so that $\widetilde\npm\geq\npm$.

    Clearly, if $\npm(\mss)=\largest$, then $\widetilde\npm(\mss)=\largest$ as well.

    Finally, suppose $\mss$ is a Borel set such that $\widetilde\npm(\mss)$ is finite. Using \cref{2_res:mu_is_inner_regular_at_finite_borel_sets} in the second step, we see that
    \begin{align*}
    \npm(\mss)&\leq\widetilde\npm(\mss)\\
    &=\bigvee\{\widetilde\npm(K) :  K \text{ is compact and } K\subseteq \mss\}\\
    &=\bigvee\{\npm(K) :  K \text{ is compact and } K\subseteq \mss\}\\
    &\leq\npm(\mss).
    \end{align*}
    Hence $\npm(\mss)=\widetilde\npm(\mss)$.
\end{proof}

\begin{remark}\label{3_rem:comparison}\quad
	\begin{enumerate}
	\item\label{3_part:comparison_1}
	We now have three main results at our disposal regarding the existence of representing measures for positive operators $\posmap:\contots\to\os$, namely, \cref{2_res:riesz_representation_theorem_for_contcts_normed_case}, \cref{2_res:riesz_representation_theorem_for_contcts_finite_normal_case}, and \cref{2_res:riesz_representation_theorem_for_contcts_normal_case}. The proofs of \cref{2_res:riesz_representation_theorem_for_contcts_normed_case,2_res:riesz_representation_theorem_for_contcts_normal_case} are somewhat long and technical, and it seems appropriate to give examples where they give the optimal result.
	
	We start with an example where \cref{2_res:riesz_representation_theorem_for_contcts_normed_case} is optimal. Take $\ts=\NN$ and $\os=c_0$, and let $\posmap:c_{00}\to c_0$ be the inclusion operator. Then \cref{2_res:riesz_representation_theorem_for_contcts_normed_case} is applicable and yields a regular (infinite) representing measure. \cref{2_res:riesz_representation_theorem_for_contcts_finite_normal_case} is not applicable.  \cref{2_res:riesz_representation_theorem_for_contcts_normal_case} does apply, but does not yield a \emph{regular} representing measure.
	
	To obtain an example where \cref{2_res:riesz_representation_theorem_for_contcts_normal_case} is optimal, take $\ts=\NN$ and $\os=\ell^\infty$, and let $\posmap:c_{00}\to\ell^\infty$ be defined by setting $\posmap(e_n)\coloneqq ne_n$, where $e_n$ is the $n^{\text{th}}$ standard unit vector. Then \cref{2_res:riesz_representation_theorem_for_contcts_normal_case} is applicable, but \cref{2_res:riesz_representation_theorem_for_contcts_normed_case,2_res:riesz_representation_theorem_for_contcts_finite_normal_case} are not.
	
	\item\label{3_part:comparison_2}
	Banach lattices of operators will only rarely satisfy the assumption under~\ref{2_req:riesz_representation_theorem_for_contcts_normed_case_1} in \cref{2_res:riesz_representation_theorem_for_contcts_normed_case}, whereas it is not uncommon to fall within the range of \cref{2_res:riesz_representation_theorem_for_contcts_finite_normal_case,2_res:riesz_representation_theorem_for_contcts_normal_case}.
	\end{enumerate}
\end{remark}

\section{Riesz representation theorems for $\contots$}\label{2_sec:riesz_representation_theorems_for_contots}

\noindent In this section, we establish representation theorems for positive operators $\posmap:\contots\to\os$ that are defined on $\contots$ rather than on $\contcts$. The first one, \cref{2_res:riesz_representation_theorem_for_contots_kb_case}, is based on \cref{2_res:riesz_representation_theorem_for_contcts_normed_case} for normed spaces $\os$. The remaining ones are, although norms will still enter the picture, essentially based on \cref{2_res:riesz_representation_theorem_for_contcts_finite_normal_case} for monotone complete normal spaces $\os$.

The final results of this section, \cref{2_res:riesz_representation_theorem_for_contots_operators_on_kb_space,2_res:riesz_representation_theorem_for_contots_operators_on_hilbert_space}, show that representing measures always exist for positive (not necessarily multiplicative) operators $\posmap$ from $\contots$ into the regular operators on a KB-space and into the self-adjoint operators on a complex Hilbert space, respectively. The positive operator $\posmap$ need not be multiplicative but, as will become apparent, much of what is known for the multiplicative case (see \cite{de_jeu_ruoff:2016} and \cite{conway_A_COURSE_IN_FUNCTIONAL_ANALYSIS_SECOND_EDITION:1990}, respectively) still remains valid, including the fact that the representing measure takes its values in the coinciding bicommutants of $\posmap(\contcts)$ and $\posmap(\contots)$.

Our approach uses automatic continuity to derive results for $\contots$ from those for $\contcts$. Here is an example in the context of normed vector lattices.

\begin{lemma}\label{2_res:measure_also_represents_posmap_on_contots_normed_case}
	 Let $\ts$ be a locally compact Hausdorff space, let $\os$ be a normed vector lattice, and let $\posmap:\contots\to\os$ be a positive operator. Suppose that there exists a measure $\npm:\borel\to\pososext$ such that
	\begin{equation}\label{2_eq:representing_equation_for_contcts_general_normed_case}
	\posmap(f)=\ointm{f}
	\end{equation}
	for all $f$ in $\contcts$ and such that $\contots\subseteq\integrablefunts$.
	
	Then \cref{2_eq:representing_equation_for_contcts_general_normed_case} also holds for all $f\in\contots$.
\end{lemma}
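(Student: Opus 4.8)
The plan is to combine the automatic continuity of $\posmap$ with a monotone, uniform approximation of positive elements of $\contots$ by elements of $\contcts$, and then to fuse the resulting norm-limit with an order-limit of order integrals. Since $\posmap$ and the order integral on $\integrablefunts$ are linear and $\contots$ is a vector lattice (so every $f\in\contots$ is $\pos f-\negt f$ with $\pos f,\negt f\in\pos{\contots}$), it suffices to prove \cref{2_eq:representing_equation_for_contcts_general_normed_case} for $f\in\pos{\contots}$. The first ingredient is that $\posmap$ is norm bounded: $\contots$ with the supremum norm is a Banach lattice, $\os$ is a normed vector lattice, and a positive operator from a Banach lattice into a normed vector lattice is automatically continuous. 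A self-contained argument for this is available: assuming $\norm{f_n}\leq 1$ and $\norm{\posmap(f_n)}\geq n^{3}$, one uses $\abs{\posmap(f_n)}\leq\posmap(\abs{f_n})$ to get $\norm{\posmap(\abs{f_n})}\geq n^{3}$, forms $g\coloneqq\sum_{n\geq 1}\abs{f_n}/n^{2}\in\pos{\contots}$ (the series converges in the Banach lattice $\contots$ and $\pos{\contots}$ is norm closed), and derives from $\abs{f_m}/m^{2}\leq g$ the contradiction $\norm{\posmap(g)}\geq\norm{\posmap(\abs{f_m})}/m^{2}\geq m$ for every $m$.

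Next I would build the approximants. Fix $f\in\pos{\contots}$ and set $K_n\coloneqq\{\pt\in\ts:f(\pt)\geq 1/n\}$; as $f$ vanishes at infinity, each $K_n$ is compact, the $K_n$ increase, and $f(\pt)<1/n$ for $\pt\notin K_n$. Using Urysohn's lemma, pick $g_n\in\contcts$ with $K_n\prec g_n$, and replace $g_n$ by $\max\{g_1,\dots,g_n\}$ so that also $g_n\uparrow$ (which preserves $K_n\prec g_n$, the $K_n$ being increasing). Set $f_n\coloneqq fg_n\in\pos{\contcts}$. Then $0\leq f_n\leq f$, the sequence $\seq{f}$ increases (since $f\geq 0$ and $g_n\uparrow$), and $\norm{f-f_n}\leq 1/n$ in $\contots$ (since $f_n=f$ on $K_n$ and $0\leq f(\pt)-f_n(\pt)=f(\pt)(1-g_n(\pt))\leq f(\pt)<1/n$ for $\pt\notin K_n$); hence $f_n\uparrow f$ pointwise. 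Moreover $f_n\in\contcts\subseteq\contots\subseteq\integrablefunts$ and $f\in\integrablefunts$, so $f-f_n\in\integrablefunts$ and $f-f_n\downarrow 0$ in the vector lattice $\integrablefunts$, its pointwise infimum $0$ lying in $\integrablefunts$ and hence being its infimum there.

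Finally I would pass to the limit twice and compare. On the one hand, $\posmap$ is continuous and $\norm{f-f_n}\to 0$, so $\posmap(f_n)\to\posmap(f)$ in norm in $\os$. On the other hand, the order integral is a $\sigma$-order continuous positive operator from $\integrablefunts$ into $\os$ by \cite[Proposition~6.14]{de_jeu_jiang:2021a}, so $f-f_n\downarrow 0$ in $\integrablefunts$ gives $\ointm{f}-\ointm{f_n}\downarrow 0$ in $\os$, i.e.\ $\ointm{f_n}\uparrow\ointm{f}$ in $\os$. Put $y_n\coloneqq\ointm{f_n}$; by hypothesis $y_n=\posmap(f_n)$ because $f_n\in\contcts$, so the increasing sequence $\seq{y}$ converges in norm to $\posmap(f)$ and has supremum $\ointm{f}$ in $\os$. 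Since $y_n\leq\ointm{f}$ for all $n$ and $\pos{\os}$ is norm closed, the norm-limit gives $\posmap(f)\leq\ointm{f}$; since $y_m\leq y_n$ for $n\geq m$ and $\pos{\os}$ is norm closed, the norm-limit over $n$ gives $y_m\leq\posmap(f)$ for every $m$, so $\posmap(f)$ is an upper bound of $\seq{y}$ and therefore $\ointm{f}\leq\posmap(f)$. Hence $\posmap(f)=\ointm{f}$ for $f\in\pos{\contots}$, and by linearity of both sides for all $f\in\contots$.

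The main obstacle, beyond keeping the three spaces $\contcts\subseteq\contots\subseteq\integrablefunts$ and their orderings straight, is this last comparison step: one has to merge an order-convergence statement produced by the order integral with a norm-convergence statement produced by the automatic continuity of $\posmap$, and it is exactly there that the norm-closedness of the positive cone of the normed vector lattice $\os$ is indispensable. A smaller additional point is to arrange $\seq{f}$ to be at once increasing, so the order-integral side converges monotonically, and uniformly convergent, so the $\posmap$ side converges in norm; the product construction $f_n=fg_n$ with $g_n\uparrow$ achieves both at once.
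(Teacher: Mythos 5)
Your proof is correct, but it takes a partially different route from the paper's. The paper's argument is very short: since $\contots\subseteq\integrablefunts$, the assignment $f\mapsto\ointm{f}$ is itself a positive operator from the Banach lattice $\contots$ into the normed vector lattice $\os$, hence automatically continuous, just like $\posmap$; two continuous operators agreeing on the dense subspace $\contcts$ agree on all of $\contots$. You invoke automatic continuity only for $\posmap$, and replace the second invocation by an order-theoretic argument: a simultaneous monotone and uniform approximation $f_n\uparrow f$ with $f_n\in\contcts$, the $\sigma$-order continuity of the order integral on $\integrablefunts$ to get $\ointm{f_n}\uparrow\ointm{f}$, and the norm-closedness of $\pos{\os}$ to reconcile the order limit with the norm limit $\posmap(f_n)\to\posmap(f)$. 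Both arguments are sound. The paper's buys brevity by reusing the same continuity theorem twice; yours makes explicit \emph{why} the order integral behaves well under the approximation (via its $\sigma$-order continuity rather than via norm continuity), at the cost of the extra reconciliation step and the more delicate construction $f_n=fg_n$. Had you simply observed that $f\mapsto\ointm{f}$ is a positive, hence continuous, operator on $\contots$, the whole second half of your argument could be replaced by a density statement.
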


\begin{proof}
	We have supposed that $\contots\subseteq\integrablefunts$, so that it is possible\textemdash this is the point\textemdash to define a positive operator $f\mapsto\ointm{f}$ from $\contots$ into $\os$.
	Since $\contots$ is a Banach lattice and $E$ is a normed vector lattice, this operator is automatically continuous; see \cite[Theorem~4.3]{aliprantis_burkinshaw_POSITIVE_OPERATORS_SPRINGER_REPRINT:2006}, for example. The operator $\posmap$ is likewise automatically continuous. Since they agree on the dense subspace $\contcts$ of $\contots$, they agree on $\contots$.
\end{proof}

Looking at our representation theorems so far, there is indeed one in which normed spaces (possibly normed vector lattices) $\os$ figure, namely, \cref{2_res:riesz_representation_theorem_for_contcts_normed_case}.  \cref{2_res:measure_also_represents_posmap_on_contots_normed_case} makes clear that it is relevant to point out cases in which we know that the representing measure for $\contcts$ from \cref{2_res:riesz_representation_theorem_for_contcts_normed_case} is necessarily finite, i.e., cases where we know that $\{\posmap(f):f\prec \ts\}$ is bounded above in $\os$, once we have the extra information that $\posmap$ is, in fact, the restriction of a positive operator from $\contots$ into $\os$. This is indeed possible, as is shown by \cref{2_res:riesz_representation_theorem_for_contots_kb_case}.

As a preparation, we recall (see \cite[p.~232]{aliprantis_burkinshaw_POSITIVE_OPERATORS_SPRINGER_REPRINT:2006}, for example) that a Banach lattice is a \emph{KB-space} when every increasing norm bounded net in the positive cone is norm convergent. As is well known, the norm limit is then also the supremum of the net. In particular, every increasing norm bounded net in the positive cone of a KB-space is bounded above.

Furthermore, a KB-space satisfies all requirements in \cref{2_res:riesz_representation_theorem_for_contcts_normed_case}.
Indeed, it has an order continuous norm (see  \cite[Theorem~7.1]{wnuk_BANACH_LATTICES_WITH_ORDER_CONTINUOUS_NORMS:1999}, for example), so that it is \Dc\ (see \cite[Corollary~4.10]{aliprantis_burkinshaw_POSITIVE_OPERATORS_SPRINGER_REPRINT:2006}, for example). The fact that is a Banach space, together with the fact that a norm limit of an increasing sequence in a Banach lattice is also its supremum, shows that the second requirement is fulfilled. The third follows immediately from the normality of every Banach lattice with an order continuous norm (see \cite[Proposition~3.10]{de_jeu_jiang:2021a}, for example).

After these observations, the next result is easily proved.

\begin{theorem}\label{2_res:riesz_representation_theorem_for_contots_kb_case}
Let $\ts$ be a locally compact Hausdorff space, let $\os$ be a KB-space, and let $\posmap:\contots\to\os$ be a positive operator.

Then there exists a unique regular Borel measure $\npm:\borel\to\pososext$ on the Borel $\sigma$-algebra $\borel$ of $\ts$ such that
\begin{equation}\label{2_eq:representing_measure_for_contots_kb_case}
\posmap(f)=\ointm{f}
\end{equation}
for all $f\in\contcts$. The measure $\npm$ is finite, and \cref{2_eq:representing_measure_for_contots_kb_case} also holds for all $f\in\contots$. If $V$ is a non-empty open subset of $\ts$, then
\begin{equation*}
\npm(V)=\bigvee\{\posmap(f) : f\prec V\}
\end{equation*}
in $\os$. If $K$ is a compact subset of $\ts$, then
\begin{equation*}
\npm(K)=\bigwedge\{\posmap(f) : K\prec f\}
\end{equation*}
in $\os$.

\end{theorem}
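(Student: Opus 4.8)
The plan is to apply \cref{2_res:riesz_representation_theorem_for_contcts_normed_case} to the restriction $\posmap|_{\contcts}$ and then to extend the resulting representation to all of $\contots$ by automatic continuity, exploiting that a KB-space is in particular a Banach lattice. We may assume $\ts$ non-empty, the empty case being trivial. As recorded in the paragraphs preceding the statement, a KB-space satisfies requirements~\ref{2_req:riesz_representation_theorem_for_contcts_normed_case_1}, \ref{2_req:riesz_representation_theorem_for_contcts_normed_case_2}, and~\ref{2_req:riesz_representation_theorem_for_contcts_normed_case_3} of \cref{2_res:riesz_representation_theorem_for_contcts_normed_case}. That theorem therefore already supplies a unique regular Borel measure $\npm:\borel\to\pososext$ with $\posmap(f)=\ointm{f}$ for all $f\in\contcts$, together with the identities \cref{2_eq:measure_of_open_subsets_infinite_normed_case,2_eq:measure_of_compact_subsets_infinite_normed_case} for its values at open and at compact subsets of $\ts$, and with the asserted uniqueness (which concerns only the representing identity on $\contcts$). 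What remains is to show that $\npm$ is finite and that the representation extends to $\contots$.

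For finiteness, I would use the criterion built into \cref{2_res:riesz_representation_theorem_for_contcts_normed_case}: the measure $\npm$ is finite if and only if $\{\posmap(f) : f\in\pos{\contcts},\,\norm{f}\leq 1\}$ is bounded above in $\os$. Since both $\contots$ and $\os$ are Banach lattices, the positive operator $\posmap:\contots\to\os$ is automatically norm bounded (see \cite[Theorem~4.3]{aliprantis_burkinshaw_POSITIVE_OPERATORS_SPRINGER_REPRINT:2006}, for example). The set $\{\posmap(f) : f\in\pos{\contcts},\,\norm{f}\leq 1\}$ is then contained in $\posos$, is upward directed (if $f_1,f_2\in\pos{\contcts}$ have norm at most $1$, then so does $f_1\vee f_2$, and $\posmap(f_1\vee f_2)\geq\posmap(f_i)$ by positivity), and is norm bounded by $\norm{\posmap}$. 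By the defining property of a KB-space, this increasing norm bounded net in $\posos$ is norm convergent, and its norm limit is its supremum; in particular it is bounded above. Hence $\npm$ is finite, and the supremum in \cref{2_eq:measure_of_open_subsets_infinite_normed_case} and the infimum in \cref{2_eq:measure_of_compact_subsets_infinite_normed_case} are attained in $\os$.

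To obtain $\posmap(f)=\ointm{f}$ for all $f\in\contots$, I would invoke \cref{2_res:measure_also_represents_posmap_on_contots_normed_case}, whose hypotheses require that $\os$ be a normed vector lattice---which it is---and that $\contots\subseteq\integrablefunts$. The latter inclusion holds because $\npm$ is now known to be finite: every $f\in\contots$ is bounded and Borel measurable, hence lies in $\boundedmeasfunts$, and $\boundedmeasfunts\subseteq\integrablefunts$ for a finite measure. \cref{2_res:measure_also_represents_posmap_on_contots_normed_case} then yields $\posmap(f)=\ointm{f}$ for all $f\in\contots$, finishing the existence part; uniqueness is inherited verbatim from \cref{2_res:riesz_representation_theorem_for_contcts_normed_case}, since the representing identity on $\contcts$ alone determines $\npm$.

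I do not expect a genuine obstacle here: the heavy lifting is done by \cref{2_res:riesz_representation_theorem_for_contcts_normed_case,2_res:measure_also_represents_posmap_on_contots_normed_case}, and the only new ingredient is the observation that the automatic continuity of $\posmap$ on $\contots$ together with the KB-space property forces the family $\{\posmap(f) : f\in\pos{\contcts},\,\norm{f}\leq 1\}$ to be bounded above, which is precisely what turns the possibly infinite measure of \cref{2_res:riesz_representation_theorem_for_contcts_normed_case} into a finite one in the present setting.
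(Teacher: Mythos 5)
Your proposal is correct and follows essentially the same route as the paper: apply \cref{2_res:riesz_representation_theorem_for_contcts_normed_case} (whose hypotheses a KB-space satisfies), use the automatic norm-boundedness of the positive operator $\posmap$ on $\contots$ together with upward directedness and the KB-property to see that $\{\posmap(f) : f\in\pos{\contcts},\,\norm{f}\leq 1\}$ is bounded above, hence that $\npm$ is finite, and then conclude via \cref{2_res:measure_also_represents_posmap_on_contots_normed_case}. Your explicit verification that $\contots\subseteq\integrablefunts$ once $\npm$ is known to be finite is a detail the paper leaves implicit, but it is exactly the right justification.
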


\begin{proof}
As observed preceding the theorem, \cref{2_res:riesz_representation_theorem_for_contcts_normed_case} can be applied. The measure $\npm$ that is obtained satisfies $\npm(\ts)=\{\posmap(f): f\in\pos{\contcts},\,\norm{f}\leq 1\}$. Since $\posmap$ is positive, it is continuous, so that the set $\{\posmap(f): f\in\pos{\contcts},\,\norm{f}\leq 1\}$ is norm bounded. Since it is also upward directed as a consequence of the positivity of $\posmap$, the remarks preceding the theorem show that it is bounded above. We conclude that $\npm$ is a finite measure, and then \cref{2_res:measure_also_represents_posmap_on_contots_normed_case} applies to show that \cref{2_eq:representing_measure_for_contots_kb_case} is valid for all $f\in\contots$.
\end{proof}

We now turn to normal spaces, where we have \cref{2_res:riesz_representation_theorem_for_contcts_finite_normal_case} to start from. As we shall see, we shall actually obtain a generalisation of \cref{2_res:riesz_representation_theorem_for_contots_kb_case}. For these spaces, we need the following companion result of \cref{2_res:measure_also_represents_posmap_on_contots_normed_case}.

\begin{lemma}\label{2_res:measure_also_represents_posmap_on_contots_normal_case}
	Let $\ts$ be a locally compact Hausdorff space, let $\os$ be a \smc\ partially ordered vector space such that the positive functionals separate the points of $\os$, and let $\posmap: \contots\to\os$ be a positive operator. Suppose that there exists a measure $\npm:\borel\to\pososext$ such that
	\begin{equation}\label{2_eq:representing_equation_for_contcts_general_normal_case}
	\posmap(f)=\ointm{f}
	\end{equation}
	for all $f$ in $\contcts$ and such that $\contots\subseteq\integrablefunts$.
	
	Then \cref{2_eq:representing_equation_for_contcts_general_normal_case} also holds for all $f\in\contots$.
\end{lemma}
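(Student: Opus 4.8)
The plan is to imitate the proof of \cref{2_res:measure_also_represents_posmap_on_contots_normed_case}, replacing the appeal to automatic continuity of positive \emph{operators} between Banach lattices by the separation hypothesis on $\os$ combined with the automatic continuity of positive \emph{functionals} on a Banach lattice. First I would use the standing assumption $\contots\subseteq\integrablefunts$ to define a map $S\colon\contots\to\os$ by $S(f)\coloneqq\ointm{f}$. By \cite[Proposition~6.14]{de_jeu_jiang:2021a} the order integral is a positive linear operator on $\integrablefunts$ with values in $\os$, so $S$ is a well-defined positive linear operator into $\os$; moreover $S$ agrees with $\posmap$ on the subspace $\contcts$ by \cref{2_eq:representing_equation_for_contcts_general_normal_case}. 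It remains to show that $S=\posmap$ on all of $\contots$.

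Next I would fix an arbitrary positive functional $x^\prime$ on $\os$ and consider the two real-valued functionals $x^\prime\circ S$ and $x^\prime\circ\posmap$ on $\contots$. Each is a composition of positive linear maps, hence is itself a positive linear functional on the Banach lattice $\contots$, and therefore norm continuous (a positive operator from a Banach lattice into a normed vector lattice, in particular into $\RR$, is automatically continuous; see \cite[Theorem~4.3]{aliprantis_burkinshaw_POSITIVE_OPERATORS_SPRINGER_REPRINT:2006}). Since $\contcts$ is norm dense in $\contots$ and the two functionals coincide on $\contcts$, they coincide on all of $\contots$; that is, $\f{\posmap(f)-S(f),x^\prime}=0$ for every $f\in\contots$.

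Finally, since $x^\prime$ was an arbitrary positive functional and the positive functionals separate the points of $\os$, it follows that $\posmap(f)-S(f)=0$, i.e.\ $\posmap(f)=\ointm{f}$, for all $f\in\contots$, which is the assertion.

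I do not expect a genuine obstacle here: the argument is soft, built from density of $\contcts$ in $\contots$, automatic continuity of positive functionals, and the separation hypothesis, exactly as in \cref{2_res:measure_also_represents_posmap_on_contots_normed_case}. The only point that needs a little care is the very first one — checking that $S$ really is a positive \emph{linear operator into $\os$} (so that $x^\prime\circ S$ is defined and positive), which is precisely where the hypothesis $\contots\subseteq\integrablefunts$ and the linearity and positivity of the order integral from \cite[Proposition~6.14]{de_jeu_jiang:2021a} are used.
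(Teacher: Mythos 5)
Your argument is exactly the paper's: compose $\posmap$ and the order integral with an arbitrary positive functional $x'$ on $\os$, note that both compositions are positive hence norm-continuous functionals on the Banach lattice $\contots$, conclude equality from density of $\contcts$, and finish with the separation hypothesis. The proposal is correct and takes essentially the same route, with the added (harmless and indeed careful) explicit remark that $f\mapsto\ointm{f}$ is a well-defined positive operator on $\contots$ because $\contots\subseteq\integrablefunts$.
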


\begin{proof}
	Take a positive functional $x^\prime$ on $\os$. Then the maps $f\mapsto\f{\posmap(f),x^\prime}$ and $f\mapsto\f{\ointm{f},x^\prime}$ are both positive functionals on the Banach lattice $\contots$. They agree on the dense subspace $\contcts$ of $\contots$, so they are equal by continuity. Since the positive functionals on $\os$ separate the points, it follows that $\posmap(f)=\ointm{f}$ for all $f\in\contots$.
\end{proof}

Analogously to case above for normed spaces, it is, in view of \cref{2_res:measure_also_represents_posmap_on_contots_normal_case}, relevant to point out cases where we know that the representing measure for $\contcts$ in \cref{2_res:riesz_representation_theorem_for_contcts_finite_normal_case} is necessarily finite, i.e., that $\{\posmap(f):f\prec \ts\}$ is bounded above, once we have the extra information that $\posmap$ is the restriction of a positive operator defined on $\contots$.

This is indeed possible again for a class of partially ordered vector spaces that we now introduce.

\begin{definition}\label{2_def:quasi_perfect_spaces}
	A partially ordered vector pace $\os$ is \emph{quasi-perfect} when the two following conditions are both satisfied:
	\begin{enumerate}
		\item\label{2_part:quasi_perfect_spaces_1}
		$\os$ is normal;
		\item\label{2_part:quasi_perfect_spaces_2}
		whenever an increasing net $\net{x}$ in $\posos$ is such that $\sup\f{x_\lambda,x^\prime}<\infty$ for each $x^\prime\in\pos{(\odualos)}$, then this net has a supremum in $\os$.
	\end{enumerate}
\end{definition}

To motivate the terminology, let us recall that a vector lattice is called \emph{perfect} if the natural Riesz homomorphism from $\os$ into $\ocdual{(\ocdualos)}$ is a surjective isomorphism. The following alternate characterisation is due to Nakano; see \cite[Theorem~1.71]{aliprantis_burkinshaw_POSITIVE_OPERATORS_SPRINGER_REPRINT:2006}.

\begin{theorem}\label{2_res:nakano}
	A vector lattice $\os$ is a perfect vector lattice if and only if	the following two conditions hold:
	\begin{enumerate}
		\item\label{2_part:nakano_1}
		$\os$ is normal;
		\item\label{2_part:nakano_2}
		whenever an increasing net $\net{x}$ in $\posos$ is such that $\sup\f{x_\lambda,x^\prime}<\infty$ for each $x^\prime\in\pos{(\ocdualos)}$, then this net has a supremum in $\os$.
	\end{enumerate}
\end{theorem}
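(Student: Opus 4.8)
The plan is to argue through the canonical Riesz homomorphism $J\colon\os\to\ocdual{(\ocdualos)}$, $Jx(x^\prime)\coloneqq\f{x,x^\prime}$, that occurs in the definition of a perfect vector lattice, combining it with two classical facts from the duality theory of Riesz spaces (see, e.g., \cite{aliprantis_burkinshaw_POSITIVE_OPERATORS_SPRINGER_REPRINT:2006}). First, $\odualos$ is \Dc\ and $\ocdualos$ is a band in $\odualos$; hence $\ocdual{(\ocdualos)}$ is a band in the \Dc\ space $\odual{(\ocdualos)}$, and in such spaces suprema of increasing order bounded nets are computed pointwise on the relevant positive cones. Second, $J$ is a well-defined Riesz homomorphism (this is part of the assertion that there is a ``natural Riesz homomorphism'' at all), it is injective precisely when $\ocdualos$ separates the points of $\os$, which for a vector lattice is the same as $\os$ being normal (see \cite[Lemma~3.7]{de_jeu_jiang:2021a}), and when $\os$ is normal the range $J(\os)$ is moreover order dense in $\ocdual{(\ocdualos)}$; finally, \cref{2_def:normal_space} says precisely that $J$ reflects positivity, i.e., that $Jx\geq0$ implies $x\in\posos$. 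Throughout, write $F\coloneqq\ocdual{(\ocdualos)}$.

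Assume first that $\os$ is perfect, so that $J\colon\os\to F$ is a surjective Riesz isomorphism. Condition~\ref{2_part:nakano_1} then holds: if $\f{x,x^\prime}\geq0$ for all $x^\prime\in\pos{(\ocdualos)}$, then $Jx\in\pos{F}$, whence $x\in\posos$ because $J^{-1}$ is positive; the reverse implication is trivial. For condition~\ref{2_part:nakano_2}, let $\net{x}\subseteq\posos$ be increasing with $\sup_\lambda\f{x_\lambda,x^\prime}<\infty$ for each $x^\prime\in\pos{(\ocdualos)}$. By directedness of the net, $\phi(x^\prime)\coloneqq\sup_\lambda\f{x_\lambda,x^\prime}$ defines an additive, positively homogeneous map on $\pos{(\ocdualos)}$, which extends to a positive functional $\phi\in\odual{(\ocdualos)}$ with $\phi\geq Jx_\lambda$ for all $\lambda$. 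Thus $\{Jx_\lambda\}$ is increasing and bounded above in the \Dc\ space $\odual{(\ocdualos)}$, so $\bigvee_\lambda Jx_\lambda$ exists there; since $F$ is a band in $\odual{(\ocdualos)}$ and contains each $Jx_\lambda$, this supremum lies in $F$. Surjectivity of $J$ yields $x\in\os$ with $Jx=\bigvee_\lambda Jx_\lambda$, and then $x=\bigvee_\lambda x_\lambda$ in $\os$ since $J$ is an order isomorphism.

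Conversely, assume~\ref{2_part:nakano_1} and~\ref{2_part:nakano_2}. By~\ref{2_part:nakano_1} and the facts recalled above, $J$ is an injective Riesz homomorphism onto the order dense Riesz subspace $J(\os)$ of $F$ that reflects positivity, hence a Riesz isomorphism onto $J(\os)$; it remains to show $J(\os)=F$. Fix $\psi\in\pos{F}$ and put $D\coloneqq\{x\in\posos : Jx\leq\psi\}$. Then $0\in D$, and $D$ is upward directed because $J(x\vee y)=Jx\vee Jy\leq\psi$ for $x,y\in D$. For $x\in D$ and $x^\prime\in\pos{(\ocdualos)}$ one has $\f{x,x^\prime}=Jx(x^\prime)\leq\psi(x^\prime)<\infty$; viewing $D$ as an increasing net, condition~\ref{2_part:nakano_2} produces $x_0\coloneqq\bigvee D$ in $\os$. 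By part~\ref{2_part:sup} of \cref{2_res:inf_and_sup_via_order_dual} (applicable since $\os$ is normal), $\f{x_0,x^\prime}=\sup_{x\in D}\f{x,x^\prime}$ for every $x^\prime\in\pos{(\ocdualos)}$; comparing this with the pointwise computation in the band $F$ of $\psi_0\coloneqq\bigvee_{x\in D}Jx$ shows $Jx_0=\psi_0$, and in particular $Jx_0=\psi_0\leq\psi$, so $x_0\in D$. If $\psi_0\neq\psi$, then $\psi-\psi_0>0$, and order density of $J(\os)$ in $F$ supplies $y\in\os$ with $0<Jy\leq\psi-\psi_0$; since $J$ reflects positivity and is injective, $y>0$, while $J(x_0+y)=Jx_0+Jy\leq\psi$ and $x_0+y\geq0$ force $x_0+y\in D$, hence $x_0+y\leq\bigvee D=x_0$ and $y\leq0$, a contradiction. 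Therefore $\psi=Jx_0\in J(\os)$, so $\pos{F}\subseteq J(\os)$; as $F=\pos{F}-\pos{F}$ and $J(\os)$ is a linear subspace, $J$ is onto and $\os$ is perfect.

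The combinatorial core above is short; the weight of the argument rests on the two classical inputs recalled at the start, above all on the fact that for normal $\os$ the canonical map $J$ is a Riesz homomorphism into the order continuous bidual with order dense range. Granting those, the one genuinely delicate step is transferring the supremum $x_0=\bigvee D$ out of $\os$ into $F$ so as to conclude $x_0\in D$: this is exactly where order continuity of the functionals in $\ocdualos$ enters, via \cref{2_res:inf_and_sup_via_order_dual}, after which the passage from $x_0$ to $x_0+y\in D$ closes the gap between $\psi_0$ and $\psi$ and forces surjectivity.
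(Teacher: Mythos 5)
Your proof is correct; note that the paper itself gives no proof of this theorem, but simply cites it as Nakano's classical characterisation \cite[Theorem~1.71]{aliprantis_burkinshaw_POSITIVE_OPERATORS_SPRINGER_REPRINT:2006}, and your argument is essentially the standard one found there. The only caveat is that your two ``classical inputs'' (that $J$ maps into $\ocdual{(\ocdualos)}$ as a Riesz homomorphism, and that for normal $\os$ its range is order dense there) carry most of the weight and are themselves nontrivial theorems, but treating them as known is entirely consistent with how the paper handles the result.
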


We see from this that every perfect vector lattice satisfies the more lenient conditions in \cref{2_def:quasi_perfect_spaces}; hence the name `quasi-perfect'.

Clearly, a quasi-perfect partially ordered vector space is \mc.

We shall collect a number of examples of quasi-perfect partially ordered vector spaces in \cref{2_res:examples_of_quasi_perfect_spaces}. As a preparation, we recall that the norm on a Banach lattice is said to be a \emph{Levi norm} if every norm bounded increasing net in the positive cone has a supremum. It is not difficult to see that the KB-spaces are precisely the Banach lattices with a Levi norm that is order continuous.

\begin{lemma}\label{2_res:inclusions_between_classes_of_riesz_spaces}
The following inclusions between classes of vector lattices hold.
	\begin{enumerate}
		\item\label{2_part:inclusions_between_classes_of_riesz_spaces_1}
		Every KB-space is a normal Banach lattice with a Levi norm;
		\item\label{2_part:inclusions_between_classes_of_riesz_spaces_2}
		Every normal Banach lattice with a Levi norm is a quasi-perfect vector lattice;
		\item\label{2_part:inclusions_between_classes_of_riesz_spaces_3}
		Every KB-space is a perfect vector lattice;
		\item\label{2_part:inclusions_between_classes_of_riesz_spaces_4}
		Every perfect vector lattice is a quasi-perfect vector lattice.
	\end{enumerate}
\end{lemma}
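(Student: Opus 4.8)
The plan is to establish the four inclusions in the stated order, relying on the remarks recorded immediately before the lemma together with Nakano's characterisation (\cref{2_res:nakano}); no new machinery is required. For part~\ref{2_part:inclusions_between_classes_of_riesz_spaces_1}, recall that a KB-space $\os$ has an order continuous norm, and that this was already noted to force $\os$ to be normal (via \cite[Proposition~3.10]{de_jeu_jiang:2021a}). That its norm is Levi is immediate from the definition of a KB-space: an increasing norm bounded net in $\posos$ is then norm convergent and its norm limit is its supremum, so it has a supremum in $\os$.

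For part~\ref{2_part:inclusions_between_classes_of_riesz_spaces_2}, let $\os$ be a normal Banach lattice with a Levi norm. Normality is condition~\ref{2_part:quasi_perfect_spaces_1} of \cref{2_def:quasi_perfect_spaces}, so only condition~\ref{2_part:quasi_perfect_spaces_2} needs checking. Here $\odualos$ is the norm dual $\ndualos$, which is itself a Banach lattice and hence has a generating cone. Given an increasing net $\net{x}$ in $\posos$ with $\sup_\lambda\f{x_\lambda,x^\prime}<\infty$ for every $x^\prime\in\pos{(\odualos)}$, I would first split an arbitrary $x^\prime\in\ndualos$ into its positive and negative parts and use $x_\lambda\geq 0$ to deduce $\sup_\lambda\abs{\f{x_\lambda,x^\prime}}<\infty$ for \emph{all} $x^\prime\in\ndualos$; then the uniform boundedness principle, applied to the functionals $x^\prime\mapsto\f{x_\lambda,x^\prime}$ on the Banach space $\ndualos$ and to the isometric embedding of $\os$ into its bidual, yields $\sup_\lambda\norm{x_\lambda}<\infty$. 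The Levi property then supplies a supremum of $\net{x}$ in $\os$, so condition~\ref{2_part:quasi_perfect_spaces_2} holds and $\os$ is quasi-perfect.

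Part~\ref{2_part:inclusions_between_classes_of_riesz_spaces_3} follows by combining the first two: a KB-space $\os$ is, by part~\ref{2_part:inclusions_between_classes_of_riesz_spaces_1}, a normal Banach lattice with a Levi norm, hence by part~\ref{2_part:inclusions_between_classes_of_riesz_spaces_2} it is normal and satisfies condition~\ref{2_part:quasi_perfect_spaces_2} of \cref{2_def:quasi_perfect_spaces}. Since the norm of $\os$ is order continuous, every positive functional on $\os$ is order continuous, so $\ocdualos=\odualos$; thus that condition coincides for $\os$ with condition~\ref{2_part:nakano_2} of Nakano's theorem, and an appeal to \cref{2_res:nakano} (together with normality) shows $\os$ is perfect. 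Part~\ref{2_part:inclusions_between_classes_of_riesz_spaces_4} is then read off from \cref{2_res:nakano} and the inclusion $\pos{(\ocdualos)}\subseteq\pos{(\odualos)}$: a perfect vector lattice is normal, and condition~\ref{2_part:nakano_2}, which hypothesises boundedness over the smaller set $\pos{(\ocdualos)}$, a fortiori implies condition~\ref{2_part:quasi_perfect_spaces_2}, phrased over $\pos{(\odualos)}$.

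The only step that is not pure bookkeeping is the passage from dual-pairing boundedness to norm boundedness in part~\ref{2_part:inclusions_between_classes_of_riesz_spaces_2}; once the uniform boundedness principle has been invoked there, the remainder is a matter of unwinding the definitions and quoting the already-established facts about Banach lattices with order continuous norms and Nakano's theorem.
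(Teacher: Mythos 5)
Your proof is correct, and parts~(1), (2), and~(4) follow the paper's argument essentially verbatim; the paper merely leaves the uniform boundedness step in part~(2) implicit, which you make explicit (correctly, via the canonical isometric embedding into the bidual). The one genuine difference is part~(3): the paper verifies Nakano's two conditions directly from the KB property\textemdash normality from order continuity of the norm, and the supremum condition by passing from $\sup_\lambda\f{x_\lambda,x^\prime}<\infty$ for $x^\prime\in\pos{(\ocdualos)}=\pos{(\ndualos)}$ to norm boundedness and then to norm convergence\textemdash whereas you deduce it from parts~(1) and~(2) together with the identification $\ocdualos=\odualos=\ndualos$, valid for Banach lattices with order continuous norms. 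Your route is slightly more economical in that it reuses the quasi-perfectness already established in part~(2); both arguments ultimately hinge on the same key fact, namely that every norm-continuous functional on a KB-space is order continuous, so that the dual cone appearing in Nakano's condition and the one in the definition of quasi-perfectness coincide for such spaces. Your reading of part~(4) as a pure comparison of hypotheses over $\pos{(\ocdualos)}\subseteq\pos{(\odualos)}$ is exactly what the paper intends by declaring that part ``clear.''
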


\begin{proof}
We prove part~\ref{2_part:inclusions_between_classes_of_riesz_spaces_1}. As noted earlier, a KB-space has an order continuous norm, so that it is normal. If $\net{x}$ is a norm bounded increasing net in the positive cone of a KB-space, then it is norm convergent. As mentioned earlier, the norm limit of the net is then also the supremum of the net.

We prove part~\ref{2_part:inclusions_between_classes_of_riesz_spaces_2}. Suppose that $\net{x}$ is an increasing net in the positive cone of a normal Banach lattice $\os$ with a Levi norm such that $\sup\f{x_\lambda,x^\prime}<\infty$ for each $x^\prime\in\pos{(\odualos)}$. Then $\{\f{x_\lambda,x^\prime} : \lambda\in\Lambda\}$ is bounded in $\RR$ for every $x^\prime\in\odualos=\ndualos$. We conclude that $\net{x}$ is norm bounded. Since the norm is supposed to be a Levi norm, the increasing net $\net{x}$ has a supremum in $\os$.

We prove part~\ref{2_part:inclusions_between_classes_of_riesz_spaces_3}. As noted above, a KB-space is normal. Suppose that $\net{x}$ is an increasing net in the positive cone of a KB-space $\os$ such that $\sup\f{x_\lambda,x^\prime}<\infty$ for each $x^\prime\in\pos{(\ocdualos)}$. This implies that the set $\{\f{x_\lambda,x^\prime} : \lambda\in\Lambda\}$ is bounded in $\RR$ for every $x^\prime\in\ocdualos=\ndualos$, where the latter equality holds because $\os$ has an order continuous norm. We conclude that $\net{x}$ is norm bounded. Since the space is a KB-space, the increasing net $\net{x}$ has a norm limit in $\os$, and this norm limit is then also the supremum of the net.

Part~\ref{2_part:inclusions_between_classes_of_riesz_spaces_4} is clear.
\end{proof}

\begin{proposition}\label{2_res:examples_of_quasi_perfect_spaces}
	The following spaces are quasi-perfect partially ordered vector spaces:
	\begin{enumerate}
		\item\label{2_part:examples_of_quasi_perfect_spaces_1}
		perfect Riesz spaces;
		\item\label{2_part:examples_of_quasi_perfect_spaces_2}
		normal Banach lattices with a Levi norm, such as KB-spaces and, still more in particular, reflexive Banach lattices;
		\item\label{2_part:examples_of_quasi_perfect_spaces_3}
		for \SOT-closed complex linear subspaces $L$ of $\boundedh$, where $\hilbert$ is a complex Hilbert space: the real vector spaces $L_\sa$ consisting of all self-adjoint elements of $L$;
		\item\label{2_part:examples_of_quasi_perfect_spaces_4} JBW algebras.
	\end{enumerate}
\end{proposition}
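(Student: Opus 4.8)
The plan is to dispatch parts~\ref{2_part:examples_of_quasi_perfect_spaces_1} and~\ref{2_part:examples_of_quasi_perfect_spaces_2} by citation, and to treat parts~\ref{2_part:examples_of_quasi_perfect_spaces_3} and~\ref{2_part:examples_of_quasi_perfect_spaces_4} by a single argument. For part~\ref{2_part:examples_of_quasi_perfect_spaces_1}: a perfect Riesz space is quasi-perfect by part~\ref{2_part:inclusions_between_classes_of_riesz_spaces_4} of \cref{2_res:inclusions_between_classes_of_riesz_spaces}. For part~\ref{2_part:examples_of_quasi_perfect_spaces_2}: a normal Banach lattice with a Levi norm is quasi-perfect by part~\ref{2_part:inclusions_between_classes_of_riesz_spaces_2} of \cref{2_res:inclusions_between_classes_of_riesz_spaces}, a KB-space is of this form by part~\ref{2_part:inclusions_between_classes_of_riesz_spaces_1} of that lemma, and a reflexive Banach lattice is a KB-space because it contains no subspace isomorphic to $c_0$ (see, e.g., \cite{aliprantis_burkinshaw_POSITIVE_OPERATORS_SPRINGER_REPRINT:2006}). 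The substance thus lies in parts~\ref{2_part:examples_of_quasi_perfect_spaces_3} and~\ref{2_part:examples_of_quasi_perfect_spaces_4}. In both of these, condition~\ref{2_part:quasi_perfect_spaces_1} of \cref{2_def:quasi_perfect_spaces}, namely normality, is already recorded in parts~\ref{2_part:combination_result_for_normality_and_monotone_completeness_4} and~\ref{2_part:combination_result_for_normality_and_monotone_completeness_5} of \cref{2_ex:combination_result_for_normality_and_monotone_completeness}, so only condition~\ref{2_part:quasi_perfect_spaces_2} requires work. The uniform strategy I would use for it is: exhibit a supply of \emph{normal} positive functionals on the space; being positive, these are order bounded, hence regular, hence members of $\pos{(\odualos)}$, so the hypothesis of condition~\ref{2_part:quasi_perfect_spaces_2} bounds the net against all of them; invoke the uniform boundedness principle to upgrade this pointwise bound to a uniform norm bound on the net; and then observe that the net is order bounded above by a scalar multiple of an order unit, so that \mc ness produces the supremum. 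The step I expect to be the crux is this upgrade from ``bounded against all of $\pos{(\odualos)}$'' to ``norm bounded'': one must identify enough normal positive functionals inside the order dual and be sure that they genuinely lie in it.

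For part~\ref{2_part:examples_of_quasi_perfect_spaces_3}, let $L\subseteq\boundedh$ be an \SOT-closed complex linear subspace and put $\os\coloneqq L_\sa$; then $\os$ is normal by part~\ref{2_part:combination_result_for_normality_and_monotone_completeness_4} of \cref{2_ex:combination_result_for_normality_and_monotone_completeness}. Given an increasing net $\net{x}$ in $\pos\os$ with $\sup_\lambda\f{x_\lambda,x^\prime}<\infty$ for every $x^\prime\in\pos{(\odualos)}$, I would note that for each $\xi\in\hilbert$ the functional $\omega_\xi\colon T\mapsto\inp{T\xi,\xi}$ is a positive, hence regular, functional on $\os$, so $\omega_\xi\in\pos{(\odualos)}$ and therefore $\sup_\lambda\inp{x_\lambda\xi,\xi}<\infty$. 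The Cauchy--Schwarz inequality for the positive sesquilinear form $(\xi,\eta)\mapsto\inp{x_\lambda\xi,\eta}$ then gives $\sup_\lambda\abs{\inp{x_\lambda\xi,\eta}}<\infty$ for all $\xi,\eta\in\hilbert$; hence $\{x_\lambda\xi:\lambda\in\Lambda\}$ is weakly bounded, so norm bounded, for each $\xi$, and a further application of the uniform boundedness principle gives $c\coloneqq\sup_\lambda\norm{x_\lambda}<\infty$. Since $0\leq x_\lambda\leq\norm{x_\lambda}\idop\leq c\,\idop$, the increasing net $\net{x}$ is bounded above in $\boundedh_{\sa}$; by the standard monotone convergence theorem for self-adjoint operators it converges in the strong operator topology to its supremum $x$ in $\boundedh_{\sa}$. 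As $L$ is \SOT-closed and $x=x^\ast$, we get $x\in\os$, and $x=\bigvee_\lambda x_\lambda$ in $\os$ as well because the order on $\os$ is inherited from $\boundedh_{\sa}$. Hence condition~\ref{2_part:quasi_perfect_spaces_2} holds and $\os$ is quasi-perfect. Note that one cannot use $\idop$ itself as an upper bound inside $\os$, since $L$ need not contain $\idop$; this is exactly why the argument is run in the ambient space $\boundedh_{\sa}$.

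For part~\ref{2_part:examples_of_quasi_perfect_spaces_4}, let $\jbw$ be a JBW-algebra. It is normal by part~\ref{2_part:combination_result_for_normality_and_monotone_completeness_5} of \cref{2_ex:combination_result_for_normality_and_monotone_completeness}, it has an identity element $\idop$, and it is the dual of its predual $\jbw_\ast$, in which every functional admits a Jordan decomposition $\phi=\pos\phi-\negt\phi$ with $\pos\phi,\negt\phi\in\pos{(\jbw_\ast)}$ (see \cite{alfsen_shultz_GEOMETRY_OF_STATE_SPACES_OF_OPERATOR_ALGEBRAS:2003}). Given an increasing net $\net{x}$ in $\pos\jbw$ with $\sup_\lambda\f{x_\lambda,x^\prime}<\infty$ for all $x^\prime\in\pos{(\odual{\jbw})}$, I would observe that each $\phi\in\pos{(\jbw_\ast)}$ is a positive, hence regular, functional, so $\pos{(\jbw_\ast)}\subseteq\pos{(\odual{\jbw})}$; the Jordan decomposition then yields $\sup_\lambda\abs{\f{x_\lambda,\phi}}<\infty$ for every $\phi\in\jbw_\ast$. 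Viewing $\net{x}$ as a net in $(\jbw_\ast)^\ast=\jbw$ and applying the uniform boundedness principle gives $c\coloneqq\sup_\lambda\norm{x_\lambda}<\infty$. Since $0\leq y\leq\norm{y}\idop$ for every $y\in\pos\jbw$, the net $\net{x}$ is bounded above by $c\,\idop$, so \mc ness provides $\bigvee_\lambda x_\lambda\in\jbw$. Thus condition~\ref{2_part:quasi_perfect_spaces_2} is verified and $\jbw$ is quasi-perfect.
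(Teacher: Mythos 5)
Your proposal is correct and follows essentially the same route as the paper: parts (1) and (2) by citation of \cref{2_res:inclusions_between_classes_of_riesz_spaces}, and for parts (3) and (4) the identical strategy of testing the net against vector states (resp.\ normal positive functionals), upgrading to a uniform norm bound via the uniform boundedness principle, bounding above by a multiple of the identity, and invoking monotone convergence of self-adjoint operators (resp.\ monotone completeness), with the supremum landing in $L_\sa$ because $L$ is \SOT-closed. The only cosmetic differences are that you use Cauchy--Schwarz where the paper says polarisation in part (3), and in part (4) you route the uniform boundedness argument through the predual $\jbw_\ast$ and its Jordan decomposition rather than through the paper's identification of the norm dual with the order dual of the norm-complete order unit space $\jbw$; both are sound.
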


\begin{proof}
	The parts~\ref{2_part:examples_of_quasi_perfect_spaces_1} and~\ref{2_part:examples_of_quasi_perfect_spaces_2} are immediate from \cref{2_res:examples_of_quasi_perfect_spaces}.
	
	We prove part~\ref{2_part:examples_of_quasi_perfect_spaces_3}. Suppose that $\net{T}$ is an increasing net in $\pos{L_\sa}$ such that $\sup\,(T_\lambda,\varphi)<\infty$ for all $\varphi\in\ocdual{(L_\sa)}$. Then, in particular, $\sup\,\lrinp{T_\lambda x,x}<\infty$ for all $x\in\hilbert$. Polarisation yields that $\sup\,\abs{\lrinp{T_\lambda x,y}}<\infty$ for all $x,y\in\hilbert$. It follows from this that the net is uniformly bounded,  which implies that there exists an $M\geq 0$ such that $T_\lambda\leq M\idop$ for all $\lambda$. According to \cite[Lemma~I.6.4]{davidson_C-STAR-ALGEBRAS_BY_EXAMPLE:1996}, the \SOT-limit of the net exists in $\boundedh$, and this limit is the supremum of the net in $\boundedh$. Since $L$ is \SOT-closed, this limit is in $L_\sa$; it is then the supremum of the net in $L_\sa$.
	
	We prove part~\ref{2_part:examples_of_quasi_perfect_spaces_4}. The normality a JBW-algebra is stated in \cite[Theorem~2.17]{alfsen_shultz_GEOMETRY_OF_STATE_SPACES_OF_OPERATOR_ALGEBRAS:2003}. Suppose that $\net{x}$ is an increasing net in the positive cone $\pos{\jbw}$ of a JBW-algebra $\jbw$ with the property that $\sup(x_\lambda,x^\prime)<\infty$ for each $x^\prime\in\pos{(\odual{\jbw})}$. We know from \cite[Theorem~1.11]{alfsen_shultz_GEOMETRY_OF_STATE_SPACES_OF_OPERATOR_ALGEBRAS:2003} that $\jbw$ is a norm complete order unit space, and then its norm and order dual coincide by \cite[Theorem~1.19]{alfsen_shultz_STATE_SPACES_OF_OPERATOR_ALGEBRAS:2001}. The uniform boundedness principle then implies that the net is norm bounded, so that it is also order bounded in the order unit space $\jbw$. Since JBW-algebras are \mc\ by definition, the net $\net{x}$ has a supremum in $\jbw$.
\end{proof}

With the examples of quasi-perfect partially ordered vector spaces in \cref{2_res:examples_of_quasi_perfect_spaces} in mind, we
now establish a Riesz representation theorem for such spaces. It is based on \cref{2_res:riesz_representation_theorem_for_contcts_finite_normal_case} for normal spaces and includes \cref{2_res:riesz_representation_theorem_for_contots_kb_case} for KB-spaces, the proof of which was based on \cref{2_res:riesz_representation_theorem_for_contcts_normed_case} for normed spaces, as a special case.

\begin{theorem}\label{2_res:riesz_representation_theorem_for_contots_normal_case}
	Let $\ts$ be a locally compact Hausdorff space, let $\os$ be a quasi-perfect partially ordered vector space, and let $\posmap:\contots\to\os$ be a positive operator.
	
	Then there exists a unique regular Borel measure $\npm:\borel\to\pososext$ on the Borel $\sigma$-algebra $\borel$ of $\ts$ such that
	\begin{equation}\label{2_eq:representing_measure_for_contots_normal_case}
	\posmap(f)=\ointm{f}
	\end{equation}
	for all $f\in\contcts$. The measure $\npm$ is finite, and \cref{2_eq:representing_measure_for_contots_normal_case} also holds for all $f\in\contots$. If $V$ is a non-empty open subset of $\ts$, then
	\begin{equation*}
	\npm(V)=\bigvee\{\posmap(f) : f\prec V\}
	\end{equation*}
	in $\os$. If $K$ is a compact subset of $\ts$, then
	\begin{equation*}
	\npm(K)=\bigwedge\{\posmap(f) : K\prec f\}
	\end{equation*}
	in $\os$.
\end{theorem}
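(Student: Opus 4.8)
The plan is to reduce the statement to the finite normal case. The crucial point is that the hypothesis of \cref{2_res:riesz_representation_theorem_for_contcts_finite_normal_case} is satisfied here, i.e., that $\{\posmap(f) : f\in\pos{\contcts},\,\norm{f}\leq 1\}$ is bounded above in $\os$; once this is known, the theorem is just an assembly of \cref{2_res:riesz_representation_theorem_for_contcts_finite_normal_case} and \cref{2_res:measure_also_represents_posmap_on_contots_normal_case}.

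To verify the order boundedness, I would note that $\{f : f\prec\ts\}$ coincides with $\{f\in\pos{\contcts} : \norm{f}\leq 1\}$ and is upward directed, so that $\{\posmap(f) : f\prec\ts\}$ is an increasing net in $\posos$. Fix $x^\prime\in\pos{(\odualos)}$. The composition $x^\prime\circ\posmap$ is a positive functional on the Banach lattice $\contots$, hence is norm continuous (cf.\ the proof of \cref{2_res:measure_also_represents_posmap_on_contots_normal_case}; see also \cite[Theorem~4.3]{aliprantis_burkinshaw_POSITIVE_OPERATORS_SPRINGER_REPRINT:2006}). Consequently $\sup\{\f{\posmap(f),x^\prime} : f\prec\ts\}\leq\norm{x^\prime\circ\posmap}<\infty$, because $\norm{f}\leq 1$ for $f\prec\ts$. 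Since $\os$ is quasi-perfect, property~\ref{2_part:quasi_perfect_spaces_2} in \cref{2_def:quasi_perfect_spaces} now shows that the increasing net $\{\posmap(f) : f\prec\ts\}$ has a supremum in $\os$; in particular, $\{\posmap(f) : f\in\pos{\contcts},\,\norm{f}\leq 1\}$ is bounded above in $\os$. As a quasi-perfect space is \mc\ and normal, \cref{2_res:riesz_representation_theorem_for_contcts_finite_normal_case} applies to the restriction of $\posmap$ to $\contcts$ and yields the unique \uppars{and finite} regular Borel measure $\npm$ with $\posmap(f)=\ointm{f}$ for all $f\in\contcts$, together with the stated formulas for $\npm(V)$ and $\npm(K)$.

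Finally I would extend the representing identity to $\contots$ by means of \cref{2_res:measure_also_represents_posmap_on_contots_normal_case}, whose hypotheses all hold here: $\os$ is \smc\ \uppars{being \mc}; the positive functionals separate the points of $\os$, since for $x\in\os\setminus\{0\}$ the properness of $\posos$ ensures that at least one of $x$, $-x$ lies outside $\posos$, and normality then supplies an $x^\prime\in\pos{(\ocdualos)}$ with $\f{x,x^\prime}\neq 0$; and $\contots\subseteq\integrablefunts$ because $\npm$ is finite and $\contots\subseteq\boundedmeasfunts$. Hence $\posmap(f)=\ointm{f}$ for all $f\in\contots$. The uniqueness of $\npm$ as a regular Borel measure representing $\posmap$ on $\contcts$ — and hence also on $\contots$, since any such measure restricts to one representing $\posmap$ on $\contcts$ — is already part of \cref{2_res:riesz_representation_theorem_for_contcts_finite_normal_case}.

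I do not anticipate a genuine obstacle: the only slightly delicate point is the passage from ``$x^\prime\circ\posmap$ is bounded for every $x^\prime$'' to ``the net $\{\posmap(f):f\prec\ts\}$ has a supremum in $\os$'', and this is precisely what the defining property of quasi-perfectness is tailored to provide; everything else is bookkeeping around the two cited results.
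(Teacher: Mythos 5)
Your proposal is correct and follows essentially the same route as the paper's own proof: establish that $\{\posmap(f):f\prec\ts\}$ is bounded above in $\os$ by combining the automatic continuity of the positive functionals $x^\prime\circ\posmap$ on the Banach lattice $\contots$ with the defining property of quasi-perfectness, then invoke \cref{2_res:riesz_representation_theorem_for_contcts_finite_normal_case} and \cref{2_res:measure_also_represents_posmap_on_contots_normal_case}. Your additional verification of the hypotheses of \cref{2_res:measure_also_represents_posmap_on_contots_normal_case} (separation of points via normality, and $\contots\subseteq\integrablefunts$ from finiteness of $\npm$) is accurate and merely makes explicit what the paper leaves implicit.
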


\begin{proof}
We claim that $\{\posmap(f): f\prec \ts\}$ is bounded above in $\os$. To see this, take an $x^\prime$ in $\pos{(\odualos)}$, and consider the map $f\mapsto\f{\posmap(f),x^\prime}$ from $\contots$ to $\RR$. The positivity of this functional implies that it is continuous. It follows from this continuity that $\{(\posmap(f),x^\prime): f\prec\ts\}$ is bounded in $\RR$. Since the set $\{\posmap(f): f\prec \ts\}$ is upward directed, the fact that $\os$ is quasi-perfect now implies that this set has a supremum in $\os$. Consequently, it is bounded above.

Now that we know this, \cref{2_res:riesz_representation_theorem_for_contcts_finite_normal_case} applies, and then an appeal to \cref{2_res:measure_also_represents_posmap_on_contots_normal_case} completes the proof.
\end{proof}

\cref{2_res:riesz_representation_theorem_for_contots_normal_case}  has a consequence that will become important later on when considering representations of $\contots$ on KB-spaces in \cite{de_jeu_jiang:2021c}. We need a preparatory result that seems worth recording explicitly.

\begin{lemma}\label{2_res:regular_operators_on_kb_space_have_levi_norm}
	Let $\os$ be a KB-space. Then the regular norm on $\regularop{\os}$ is a Levi norm.
\end{lemma}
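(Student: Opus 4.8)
The plan is to verify the defining property of a Levi norm directly. Let $\net{T}$ be an increasing net in $\pos{\regularop{\os}}$ that is bounded in the regular norm $\norm{\cdot}_{\mathrm r}$; I must produce a supremum for it in $\regularop{\os}$. Since it is standard that $\norm{T}\leq\norm{T}_{\mathrm r}$ for every $T\in\regularop{\os}$, the hypothesis gives $M\coloneqq\sup_\lambda\norm{T_\lambda}<\infty$. Fix $x\in\posos$. For $\lambda\leq\mu$ the operator $T_\mu-T_\lambda$ is positive, so $T_\lambda x\leq T_\mu x$; thus $\{T_\lambda x\}_{\lambda\in\Lambda}$ is an increasing net in $\posos$, and it is norm bounded because $\norm{T_\lambda x}\leq M\norm{x}$. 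As $\os$ is a KB-space, this net converges in norm, and---as is well known---its norm limit is its supremum in $\os$. I would define $Tx$ to be this element of $\posos$.

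The next step is to check that $x\mapsto Tx$ extends to a positive operator on $\os$. For $x,y\in\posos$, continuity of addition in $\os$ gives $T(x+y)=\lim_\lambda T_\lambda(x+y)=\lim_\lambda\lrf{T_\lambda x+T_\lambda y}=Tx+Ty$, while $T(rx)=rTx$ for $r\in\posR$ is immediate from the definition. Hence $T$ is additive and positively homogeneous on the generating cone $\posos$, so it extends uniquely to a linear operator on $\os$ via $Tx\coloneqq T(\pos{x})-T(\negt{x})$. This operator is positive, and therefore regular, so $T\in\regularop{\os}$.

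Finally I would show that $T=\bigvee_{\lambda}T_\lambda$ in $\regularop{\os}$. Since $Tx\geq T_\lambda x$ for all $x\in\posos$ and all $\lambda$, we have $T\geq T_\lambda$ for every $\lambda$, so $T$ is an upper bound. If $S\in\regularop{\os}$ satisfies $S\geq T_\lambda$ for all $\lambda$, then $Sx\geq\sup_\lambda T_\lambda x=Tx$ for every $x\in\posos$, whence $S\geq T$; so $T$ is the least upper bound of $\net{T}$ in $\regularop{\os}$. This shows that every norm-bounded increasing net in $\pos{\regularop{\os}}$ has a supremum in $\regularop{\os}$, i.e., that the regular norm is a Levi norm. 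The argument is essentially routine; the only point at which the full KB-space hypothesis is used---rather than mere order continuity of the norm on $\os$---is the existence of $\lim_\lambda T_\lambda x$, which is precisely what guarantees that the operator $T$ defined pointwise is well defined and norm bounded.
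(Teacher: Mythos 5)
Your proof is correct and follows essentially the same route as the paper: pass to the operator norm, observe that $\{T_\lambda x\}_{\lambda\in\Lambda}$ is an increasing norm bounded net in $\posos$ for each $x\in\posos$, and invoke the KB-property to obtain pointwise suprema. The only difference is that you carry out the final step (the pointwise suprema assemble into a positive operator that is the supremum of the net in $\regularop{\os}$) by hand, whereas the paper delegates it to a citation of \cite[Theorem~1.19]{aliprantis_burkinshaw_POSITIVE_OPERATORS_SPRINGER_REPRINT:2006}; your verification of that step is accurate.
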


\begin{proof}
Suppose that $0\leq\net{T}\uparrow$ in $\regularop{\os}$ is a net that is bounded in the regular norm, which coincides with the operator norm on the positive operators. Choose $M\geq 0$ such that $\norm{T_\lambda}\leq M$ for all $\lambda\in\Lambda$. If $x\in\posos$, then $\{T_\lambda x\}_{\lambda\in\Lambda}$ is an increasing net in $\posos$. Furthermore, we have $\norm{T_\lambda x}\leq M\norm{x}$ for all $\lambda\in\Lambda$. Since $\os$ is a KB-space, the supremum of the increasing norm bounded positive net $\{T_\lambda x\}_{\lambda\in\Lambda}$ exists in $\os$. Hence the net $\net{T}$ has a supremum in $\regularop{\os}$ by  \cite[Theorem~1.19]{aliprantis_burkinshaw_POSITIVE_OPERATORS_SPRINGER_REPRINT:2006} or by the more general \cite[Proposition~3.1]{de_jeu_jiang:2021a}, for example.
\end{proof}

We can now establish a Riesz representation theorem for positive (not necessarily multiplicative) operators from $\contots$ into the regular operators on KB-spaces.

\begin{theorem}\label{2_res:riesz_representation_theorem_for_contots_operators_on_kb_space}
	Let $\ts$ be a locally compact Hausdorff space, let $\os$ be a KB-space, and let $\posmap:\contots\to\regularop{\os}$ be a positive \uppars{not necessarily multiplicative} operator.
	
	Then there exists a unique regular Borel measure $\npm:\borel\to \overline{\pos{\regularop{\os}} }$ on the Borel $\sigma$-algebra $\borel$ of $\ts$ such that
	\begin{equation}\label{2_eq:riesz_representation_theorem_for_contots_operators_on_kb_space_1}
	\posmap(f)=\ointm{f}
	\end{equation}
	for all $f\in\contcts$. The measure $\npm$ is finite, and \cref{2_eq:riesz_representation_theorem_for_contots_operators_on_kb_space_1} also holds for all $f\in\contots$. If $V$ is a non-empty open subset of $\ts$, then
	\begin{equation}\label{2_eq:riesz_representation_theorem_for_contots_operators_on_kb_space_2}
	\npm(V)=\bigvee\{\posmap(f) : f\prec V\}
	\end{equation}
	in $\os$. If $K$ is a compact subset of $\ts$, then
	\begin{equation*}%\label{2_eq:riesz_representation_theorem_for_contots_operators_on_kb_space_3}
	\npm(K)=\bigwedge\{\posmap(f) : K\prec f\}
	\end{equation*}
	in $\os$. Suppose that $\seq{\mss}$ is a pairwise disjoint sequence in $\borel$. Then, for $x\in\os$,  $\npm\left(\bigcup_{n=1}^\infty\mss_n\right)x=\sum_{n=1}^\infty \npm(\mss_n) x$ in the norm topology of $\os$.
	
	Define the extension $\posmap:\integrablefunts\to\regularop{\os}$ via \cref{2_eq:riesz_representation_theorem_for_contots_operators_on_kb_space_1}. For $x\in\os$, $x^\prime\in\odualos$, and $\mss\in\borel$, set $\npm_{x,x^\prime}(\mss)\coloneqq(\npm(\mss)x,x^\prime)$. Then $\npm_{x,x^\prime}:\borel\to\posR$ is a regular Borel measure on $\ts$ and, for $f\in\integrablefunts$, $\posmap (f)$ is the unique element of $\regularop{\os}$ such that
	\[
	(\posmap(f)x,x^\prime)=\int_\ts\! f\di{\npm_{x,x^\prime}}
	\]
	for all $x\in\os$ and $x^\prime\in\odualos$.

	Furthermore, the \SOT-closed linear subspaces of the bounded \uppars{not necessarily regular} operators on $\os$ that are generated by the following sets are equal:
	\begin{enumerate}
		\item\label{2_part:riesz_representation_theorem_for_contots_operators_on_kb_space_1}
		$\{\posmap(f): f\in\contcts\}$
		\item\label{2_part:riesz_representation_theorem_for_contots_operators_on_kb_space_2}
		$\{\posmap(f): f\in\contots\}$
		\item\label{2_part:riesz_representation_theorem_for_contots_operators_on_kb_space_3}
		$\{\posmap(f): f\in\boundedmeasfunts\}$
		\item\label{2_part:riesz_representation_theorem_for_contots_operators_on_kb_space_4}
		$\{\posmap(f): f\in\integrablefunts\}$
		\item\label{2_part:riesz_representation_theorem_for_contots_operators_on_kb_space_5}
		$\{\npm(V): V\text{ is an open subset of }\ts\}$;
		\item\label{2_part:riesz_representation_theorem_for_contots_operators_on_kb_space_6}
		$\{\npm(K): K\text{ is a compact subset of }\ts\}$;
		\item\label{2_part:riesz_representation_theorem_for_contots_operators_on_kb_space_7}
		$\{\npm(\mss): \mss\text{ is a Borel subset of }\ts\}$.
\end{enumerate}
	
\end{theorem}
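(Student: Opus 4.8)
The plan is to obtain the bulk of the statement by specialising \cref{2_res:riesz_representation_theorem_for_contots_normal_case} to the codomain $\regularop{\os}$, and then to add two pieces of bookkeeping: the extension to $\integrablefunts$ with its bilinear description, and the coincidence of the seven \SOT-closed subspaces.

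First I would check that $\regularop{\os}$ is a quasi-perfect partially ordered vector space. A KB-space is Dedekind complete with an order continuous norm, so $\regularop{\os}$ is a Dedekind complete Banach lattice in the regular norm; it is normal by part~\ref{2_part:combination_result_for_normality_and_monotone_completeness_3} of \cref{2_ex:combination_result_for_normality_and_monotone_completeness}, and its regular norm is a Levi norm by \cref{2_res:regular_operators_on_kb_space_have_levi_norm}, so part~\ref{2_part:inclusions_between_classes_of_riesz_spaces_2} of \cref{2_res:inclusions_between_classes_of_riesz_spaces} makes it quasi-perfect. Applying \cref{2_res:riesz_representation_theorem_for_contots_normal_case} with $\os$ there replaced by $\regularop{\os}$ yields the existence and uniqueness of the finite regular Borel measure $\npm$, the identity \cref{2_eq:riesz_representation_theorem_for_contots_operators_on_kb_space_1} on $\contcts$ and on $\contots$, and the displayed formulas for $\npm(V)$ and $\npm(K)$. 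Strong $\sigma$-additivity of $\npm$ is then immediate from part~\ref{2_part:comparison_with_vector_measures_and_sot_sigma_additivity_2} of \cref{2_rem:comparison_with_vector_measures_and_sot_sigma_additivity}, since $\npm$ is a finite positive $\regularop{\os}$-valued measure.

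Since $\npm$ is finite one has $\contots\subseteq\boundedmeasfunts\subseteq\integrablefunts$, and the order integral is a positive ($\sigma$-order continuous) operator from $\integrablefunts$ into $\regularop{\os}$, which is the desired extension $f\mapsto\ointm{f}$. For $x\in\pos{\os}$ and $x^\prime\in\pos{(\odualos)}$ the functional $T\mapsto\f{Tx,x^\prime}$ on $\regularop{\os}$ is positive and order continuous, because $\os$ is Dedekind complete (so evaluation $T\mapsto Tx$ at a positive vector respects suprema and infima of monotone nets on $\regularop{\os}$) and the norm of $\os$ is order continuous; hence the scalarisation of $\npm$ along this functional, which is exactly $\npm_{x,x^\prime}$, is a finite regular Borel measure by \cref{2_res:relation_between_vector-valued_and_real_measures}, the general case following by decomposing $x$ and $x^\prime$ into positive and negative parts. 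The identity $\f{\posmap(f)x,x^\prime}=\int_\ts f\di{\npm_{x,x^\prime}}$ is clear for elementary $f$, passes to non-negative $f\in\integrablefunts$ by comparing $\ointm{\varphi_n}\uparrow\ointm{f}$ in $\regularop{\os}$ (paired with $T\mapsto\f{Tx,x^\prime}$) with the classical monotone convergence theorem for $\npm_{x,x^\prime}$, and then to all $f$ and all $x,x^\prime$ by linearity; uniqueness of $\posmap(f)$ holds because $\odualos=\ndualos$ separates the points of $\os$.

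For the coincidence of the \SOT-closed subspaces $S_1,\dots,S_7$ of $\bounded(\os)$ generated by the sets \ref{2_part:riesz_representation_theorem_for_contots_operators_on_kb_space_1}--\ref{2_part:riesz_representation_theorem_for_contots_operators_on_kb_space_7}, the key observation to isolate first is that a monotone net in $\regularop{\os}$ possessing a supremum (resp.\ infimum) in $\regularop{\os}$ converges to it in the strong operator topology; this again follows from the behaviour of evaluations at positive vectors together with the order continuity of the norm of $\os$. Using it: $\npm(V)=\bigvee\{\posmap(f):f\prec V\}$ and $\npm(K)=\bigwedge\{\posmap(f):K\prec f\}$ exhibit the generators of $S_5$ and $S_6$ as \SOT-limits in $\posmap(\contcts)$, so $S_5,S_6\subseteq S_1$; inner regularity of $\npm$ at open sets and outer regularity at all Borel sets give $S_5=S_6$; finiteness of $\npm$ makes it inner regular at every Borel set by \cref{2_res:mu_is_inner_regular_at_finite_borel_sets}, whence $\npm(\mss)=\bigvee\{\npm(K):K\subseteq\mss\text{ compact}\}$ lies in $S_6$, i.e.\ $S_7\subseteq S_6$; approximating $\ointm{f}$ for $f\geq 0$ in $\integrablefunts$ from below by order integrals of elementary functions, which are finite linear combinations of the $\npm(\mss)$ and converge in \SOT, gives $S_4\subseteq S_7$; the inclusions $S_1\subseteq S_3\subseteq S_4$ are trivial from $\contcts\subseteq\boundedmeasfunts\subseteq\integrablefunts$; and $S_1=S_2$ follows from the density of $\contcts$ in $\contots$ and the automatic norm continuity of the positive operator $\posmap$ on $\contots$, since norm convergence in $\regularop{\os}$ entails \SOT-convergence. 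Chaining $S_1\subseteq S_3\subseteq S_4\subseteq S_7\subseteq S_6\subseteq S_1$ closes the loop, so together with $S_1=S_2$ and $S_5=S_6$ all seven subspaces coincide. The main obstacle here is organisational rather than conceptual: one must consistently translate every monotone limit produced by the constructions — the order integral, the inner and outer regularity of $\npm$, and the defining suprema and infima of $\npm$ at open and compact sets — into a limit in the strong operator topology, which rests on the single fact that evaluations $T\mapsto Tx$ at $x\in\pos{\os}$ respect monotone suprema and infima on $\regularop{\os}$ and that the norm of $\os$ is order continuous; beyond this, and the opening verification that $\regularop{\os}$ is quasi-perfect, no new idea seems to be required.
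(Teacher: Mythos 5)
Your proposal is correct and follows essentially the same route as the paper: apply \cref{2_res:riesz_representation_theorem_for_contots_normal_case} to the quasi-perfect space $\regularop{\os}$ (normality plus the Levi property of the regular norm from \cref{2_res:regular_operators_on_kb_space_have_levi_norm}), invoke part~\ref{2_part:comparison_with_vector_measures_and_sot_sigma_additivity_2} of \cref{2_rem:comparison_with_vector_measures_and_sot_sigma_additivity} for strong $\sigma$-additivity, and reduce the coincidence of the seven \SOT-closed subspaces to the single fact that monotone order limits in $\regularop{\os}$ are \SOT-limits because evaluation at positive vectors respects monotone suprema/infima and the norm of $\os$ is order continuous. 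The only deviations are cosmetic: you verify the scalarisation $\npm_{x,x^\prime}$ and the bilinear description by hand where the paper cites a result from its prequel, and you route the chain of inclusions through inner regularity at all Borel sets (\cref{2_res:mu_is_inner_regular_at_finite_borel_sets}) where the paper uses outer regularity; both close the same loop.
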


\begin{proof}
	Since $\os$ is a KB-space, part~(1) of
	\cite[Theorem~3.14]{de_jeu_jiang:2021a} shows that the order continuity of the norm on $\os$ implies that $\regularop{\os}$ is a normal partially ordered vector space. Since we have established in \cref{2_res:regular_operators_on_kb_space_have_levi_norm} that the regular norm on $\regularop{\os}$ is a Levi norm, \cref{2_res:riesz_representation_theorem_for_contots_normal_case} applies to the partially ordered vector space $\regularop{\os}$. On also taking part~\ref{2_part:comparison_with_vector_measures_and_sot_sigma_additivity_2} of \cref{2_rem:comparison_with_vector_measures_and_sot_sigma_additivity} and \cite[Proposition~6.8]{de_jeu_jiang:2021a} into account, all statements in the theorem follow, except the one on the equality of the \SOT-closed linear subspaces. We shall now establish this. Let $L_1,\dotsc, L_7$ denote the \SOT-closed linear subspaces of the bounded operators on $\os$ that are generated by the sets under \ref{2_part:riesz_representation_theorem_for_contots_operators_on_kb_space_1}, $\dotsc$,\ref{2_part:riesz_representation_theorem_for_contots_operators_on_kb_space_7}, respectively.
	
	We show that $L_1\supseteq L_5$. Suppose that $V$ is an open subset of $\ts$. Then \cref{2_eq:riesz_representation_theorem_for_contots_operators_on_kb_space_2} shows that $\posmap(f)x\uparrow \npm(V)x$ for all $x\in\posos$, where the supremum is the one of the increasing net $\{\posmap(f)x\in\contcts: f\prec V\}$. Since the norm of $\os$ is order continuous, it now follows that $\npm(V)$ is the strong operator limit of the $\posmap(f)$ for $f\prec V$. Hence $\npm(V)\in L_1$. It follows that $L_1\supseteq L_5$.
	
	Using the regularity of $\npm$, a similar argument shows that $L_5=L_6=L_7$.
	
	Using the definition of the order integral, an again similar argument shows that $L_7\supseteq L_4$. Since it is trivial that $L_4\supseteq L_3\supseteq L_2\supseteq L_1$, the proof is complete.
\end{proof}	

\begin{remark}\label{2_rem:bicommutant_kb_space}
It follows from \cref{2_res:riesz_representation_theorem_for_contots_kb_case} that the commutants and then also the bicommutants (both in the bounded operators on $\os$) of the seven sets in \cref{2_res:riesz_representation_theorem_for_contots_kb_case} are equal. Consequently, $\npm$ takes its values in the coinciding bicommutants of these sets.
\end{remark}

We conclude this section with a Riesz representation theorem for positive (not necessarily multiplicative) operators from $\contots$ into the self-adjoint operators on a complex Hilbert space.

\begin{theorem}\label{2_res:riesz_representation_theorem_for_contots_operators_on_hilbert_space}
	Let $\ts$ be a locally compact Hausdorff space, let $\hilbert$ be a complex Hilbert space, and let $\posmap:\contots\to \boundedh_\sa$ be a positive \uppars{not necessarily multiplicative} operator.
	Let $L$ be a strongly closed complex linear subspace of $\boundedh$ that contains $\posmap(\contots)$, and let $L_\sa$ denote be the real vector space that consists of the self-adjoint operators in $L$, supplied with the partial ordering that is inherited from the usual partial ordering on $\boundedh_\sa$.
	
	Then there exists a unique regular Borel measure $\npm:\borel\to \overline{{}\pos{L_\sa}}$ on the Borel $\sigma$-algebra $\borel$ of $\ts$ such that
	\begin{equation}\label{2_eq:riesz_representation_theorem_for_contots_operators_on_hilbert_space}
	\posmap(f)=\ointm{f}
	\end{equation}
	for all $f\in\contcts$. The measure $\npm$ is finite, and \cref{2_eq:riesz_representation_theorem_for_contots_operators_on_hilbert_space} also holds for all $f\in\contots$.	If $V$ is a non-empty open subset of $\ts$, then
	\begin{equation*}
	\npm(V)=\bigvee\{\posmap(f) : f\prec V\}
	\end{equation*}
	in  $L_\sa$. If $K$ is a compact subset of $\ts$, then
	\begin{equation*}
	\npm(K)=\bigwedge\{\posmap(f) : K\prec f\}
	\end{equation*}
	in  $L_\sa$. Suppose that $\seq{\mss}$ is a pairwise disjoint sequence in $\borel$. Then, for $x\in\hilbert$,  $\npm\left(\bigcup_{n=1}^\infty\mss_n\right)x=\sum_{n=1}^\infty \npm(\mss_n) x$ in the norm topology of $\hilbert$.

	Define the extension $\posmap:\integrablefunts\to L_\sa$ via \cref{2_eq:riesz_representation_theorem_for_contots_operators_on_kb_space_1}. For $x\in\hilbert$ and $\mss\in\borel$, set $\npm_x(\mss)\coloneqq\lrinp{\npm(\mss)x,x}$. Then $\npm_{x}:\borel\to\posR$ is a regular Borel measure on $\ts$ and, for $f\in\integrablefunts$, $\posmap (f)$ is the unique element of $\boundedh$ such that
	\[
	\lrinp{\posmap(f)x,x}=\int_\ts\! f\di{\npm_{x}}
	\]
	for all $x\in\hilbert$.

		Furthermore, the \SOT-closed real linear subspaces of $\boundedh$ that are generated by the following sets are equal:
	\begin{enumerate}
		\item\label{2_part:riesz_representation_theorem_for_contots_operators_on_hilbert_space_1}
		$\{\posmap(f): f\in\contcts\}$
		\item\label{2_part:riesz_representation_theorem_for_contots_operators_on_hilbert_space_2}
		$\{\posmap(f): f\in\contots\}$
		\item\label{2_part:riesz_representation_theorem_for_contots_operators_on_hilbert_space_3}
		$\{\posmap(f): f\in\boundedmeasfunts\}$
		\item\label{2_part:riesz_representation_theorem_for_contots_operators_on_hilbert_space_4}
		$\{\posmap(f): f\in\integrablefunts\}$
		\item\label{2_part:riesz_representation_theorem_for_contots_operators_on_hilbert_space_5}
		$\{\npm(V): V\text{ is an open subset of }\ts\}$;
		\item\label{2_part:riesz_representation_theorem_for_contots_operators_on_hilbert_space_6}
		$\{\npm(K): K\text{ is a compact subset of }\ts\}$;
		\item\label{2_part:riesz_representation_theorem_for_contots_operators_on_hilbert_space_7}
		$\{\npm(\mss): \mss\text{ is a Borel subset of }\ts\}$.
		\end{enumerate}
	The analogous statement for the equality of seven \SOT-closed complex linear subspaces of $\boundedh$ is also true.
\end{theorem}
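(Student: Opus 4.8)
The plan is to obtain everything except the two statements about subspaces directly from the general \cref{2_res:riesz_representation_theorem_for_contots_normal_case}, and then to transcribe the subspace argument of \cref{2_res:riesz_representation_theorem_for_contots_operators_on_kb_space} to the present setting. By \cref{2_res:examples_of_quasi_perfect_spaces}\ref{2_part:examples_of_quasi_perfect_spaces_3}, the space $L_\sa$ is quasi-perfect, so \cref{2_res:riesz_representation_theorem_for_contots_normal_case} applies to $\posmap\colon\contots\to L_\sa$ and yields the unique regular Borel measure $\npm\colon\borel\to\overline{\pos{L_\sa}}$, its finiteness, the representing identity $\posmap(f)=\ointm f$ for all $f\in\contots$, and the formulas for $\npm(V)$ and $\npm(K)$. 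The strong $\sigma$-additivity of $\npm$ is then immediate from part~\ref{2_part:comparison_with_vector_measures_and_sot_sigma_additivity_2} of \cref{2_rem:comparison_with_vector_measures_and_sot_sigma_additivity}, which identifies the finite measures with values in the self-adjoint part of a strongly closed linear subspace of $\boundedh$ with the set maps that are $\sigma$-additive for the strong operator topology.

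For the statement about the real measures $\npm_x$, I would first extend $\posmap$ to $\integrablefunts$ by $\posmap(f)\coloneqq\ointm f$; since $\npm$ is finite we have $\boundedmeasfunts\subseteq\integrablefunts$, and, the order integral being a positive operator into $L_\sa$, this extension takes its values in $L_\sa$. For fixed $x\in\hilbert$ the functional $T\mapsto\lrinp{Tx,x}$ is positive on $L_\sa$, and it is order continuous: if $T_\lambda\downarrow 0$ in $L_\sa$, then $\{T_\lambda\}$ is decreasing and bounded below by $0$, so its \SOT-limit exists in $\boundedh$ by \cite[Lemma~I.6.4]{davidson_C-STAR-ALGEBRAS_BY_EXAMPLE:1996}, lies in $L$ because $L$ is strongly closed, is self-adjoint, and is the infimum of $\{T_\lambda\}$ in $\boundedh_\sa$ and hence in $L_\sa$; therefore this \SOT-limit equals $0$ and $\lrinp{T_\lambda x,x}\to 0$. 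Thus $T\mapsto\lrinp{Tx,x}$ lies in $\pos{(\ocdual{(L_\sa)})}$, so $\npm_x=\lrinp{\npm(\cdot)x,x}$ is a finite regular Borel measure on $\ts$ by \cref{2_res:relation_between_vector-valued_and_real_measures}. Applying this order-continuous functional to the order integral, exactly as in the proof of \cref{2_res:riesz_representation_theorem_for_contcts_finite_normal_case} and using \cite[Proposition~6.8]{de_jeu_jiang:2021a}, gives $\lrinp{\posmap(f)x,x}=\int_\ts f\di{\npm_x}$ for all $f\in\integrablefunts$; that $\posmap(f)$ is the only bounded operator on $\hilbert$ with these quadratic-form values follows from the polarization identity.

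Finally, for the equality of the seven \SOT-closed real linear subspaces $L_1,\dots,L_7$ generated by the sets in \ref{2_part:riesz_representation_theorem_for_contots_operators_on_hilbert_space_1}--\ref{2_part:riesz_representation_theorem_for_contots_operators_on_hilbert_space_7}, I would mimic the proof of \cref{2_res:riesz_representation_theorem_for_contots_operators_on_kb_space}. If $V$ is open, then $\{\posmap(f):f\prec V\}$ increases with supremum $\npm(V)$ in $L_\sa$ and is bounded above, so by \cite[Lemma~I.6.4]{davidson_C-STAR-ALGEBRAS_BY_EXAMPLE:1996} it converges to $\npm(V)$ in the strong operator topology, whence $\npm(V)\in L_1$ and $L_5\subseteq L_1$. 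The regularity of $\npm$ (inner regularity at open sets and outer regularity at all Borel sets, together with inner regularity at all Borel sets by \cref{2_res:mu_is_inner_regular_at_finite_borel_sets}, since $\npm$ is finite) exhibits each of $\npm(V)$, $\npm(K)$, $\npm(\mss)$ as a strong-operator limit of an increasing or decreasing net, bounded above or below, of values of the other two kinds, which gives $L_5=L_6=L_7$. For $f\in\integrablefunts$ the element $\ointm f$ is, by the definition of the order integral, an order limit of integrals of elementary functions, each a finite linear combination of values $\npm(\mss)$, and such order limits are strong-operator limits by the same lemma, so $\posmap(f)\in L_7$ and $L_4\subseteq L_7$. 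Combining with the trivial inclusions $L_1\subseteq L_2\subseteq L_3\subseteq L_4$ gives $L_5\subseteq L_1\subseteq L_2\subseteq L_3\subseteq L_4\subseteq L_7=L_5$, so the seven subspaces coincide. Since this chain of arguments only ever passes to strong-operator limits and applies the already-established formulas for $\npm$, none of which depends on whether the spans are taken over $\RR$ or over $\CC$, repeating it verbatim for complex-linear spans proves the equality of the seven \SOT-closed complex linear subspaces. The one point requiring genuine care beyond transcription is the order continuity of $T\mapsto\lrinp{Tx,x}$ on $L_\sa$ in the second paragraph; the remainder is bookkeeping.
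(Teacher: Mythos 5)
Your proposal is correct and takes essentially the same route as the paper: quasi-perfectness of $L_\sa$ via part~\ref{2_part:examples_of_quasi_perfect_spaces_3} of \cref{2_res:examples_of_quasi_perfect_spaces}, strong $\sigma$-additivity via part~\ref{2_part:comparison_with_vector_measures_and_sot_sigma_additivity_2} of \cref{2_rem:comparison_with_vector_measures_and_sot_sigma_additivity}, and the equality of the seven subspaces by transcribing the argument of \cref{2_res:riesz_representation_theorem_for_contots_operators_on_kb_space} using that monotone bounded nets of self-adjoint operators converge to their suprema/infima in the strong operator topology. The only cosmetic difference is that you obtain existence and the extension to $\contots$ in one stroke from \cref{2_res:riesz_representation_theorem_for_contots_normal_case}, whereas the paper applies \cref{2_res:riesz_representation_theorem_for_contcts_finite_normal_case} and then extends to $\contots$ by hand via the positive functionals $f\mapsto\lrinp{\posmap(f)x,x}$; since that theorem is itself proved by exactly this combination, nothing substantive changes.
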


\begin{proof}
	In view of part~\ref{2_part:examples_of_quasi_perfect_spaces_3} of \cref{2_res:examples_of_quasi_perfect_spaces},  \cref{2_res:riesz_representation_theorem_for_contcts_finite_normal_case} applies to the positive operator $\posmap:\contots\to L_{\sa}$. Together with part~\ref{2_part:comparison_with_vector_measures_and_sot_sigma_additivity_2} of \cref{2_rem:comparison_with_vector_measures_and_sot_sigma_additivity} and \cite[Proposition~6.8]{de_jeu_jiang:2021a}, it shows that a measure $\npm$ exists that has all the properties in the theorem, except that we still need to show that \cref{2_eq:riesz_representation_theorem_for_contots_operators_on_hilbert_space} holds for all $f\in\contots$, and that the statement on the equality of seven \SOT-closed complex linear subspaces of $\boundedh$ is valid.
		
		For the validity of \cref{2_eq:riesz_representation_theorem_for_contots_operators_on_hilbert_space}, we consider, for $x\in\hilbert$, the functionals on $\contots$ that are defined by $f\mapsto\inp{\posmap(f)x,x}$ and $f\mapsto\lrinp{\left(\ointm{f}\right)\cdot x,x,}$. Being positive, they are continuous. Since they agree on $\contcts$, they are equal.
		Hence $\inp{\posmap(f)x,x}=\lrinp{\left(\ointm{f}\right)\cdot x,x}$ for all $f\in\contots$ and $x\in \hilbert$.
		This implies that \cref{2_eq:riesz_representation_theorem_for_contots_operators_on_hilbert_space} holds for all $f\in\contots$.
		
	For a monotone bounded net of self-adjoint operators on $\hilbert$, its supremum (or infimum) is its limit in the strong operator topology. Using this as in the proof of \cref{2_res:riesz_representation_theorem_for_contots_operators_on_kb_space}, the equality of the \SOT-closed real linear subspaces that are generated by the seven sets follows easily from the regularity of $\npm$ and the definition of the order integral. The analogous statement for the complex linear subspaces follows from the real case.
\end{proof}

\begin{remark}\label{2_rem:bicommutant_hilbert_space}
	As in the case of \cref{2_res:riesz_representation_theorem_for_contots_operators_on_kb_space}, it follows from \cref{2_res:riesz_representation_theorem_for_contots_operators_on_hilbert_space} that the commutants and then also the bicommutants in $\boundedh$ of the seven sets in \cref{2_res:riesz_representation_theorem_for_contots_operators_on_hilbert_space} are equal. Consequently, $\npm$ takes its values in these coinciding bicommutants.
\end{remark}

\section*{Acknowledgements} The authors thank Marten Wortel for helpful discussions on JBW-algebras.

\bibliographystyle{plain}
\urlstyle{same}

\bibliography{general_bibliography}

\end{document}